\documentclass[12pt,reqno]{amsart}  % Specifies the document style.
\usepackage{amsmath} % symbols like equation*
\usepackage{amssymb} % symbols like mathbb
\usepackage{amsthm} % proof
\usepackage{mathtools} % symbols like norm
\usepackage{enumitem} % change item label 
\usepackage{comment}

\usepackage{color}

\usepackage{hyperref}
\hypersetup{citebordercolor={1 1 1}, pdfborder={0 0 0}}

\newcommand{\sub}{\subseteq}

\newcommand{\R}{\mathbb{R}}

\newcommand{\eps}{\varepsilon}
\newcommand{\Lip}{\mathrm{Lip}}
\newcommand{\vectornotation}[1]{\MakeUppercase{#1}}
\newcommand{\conenotation}{\mathcal C} %\mathrm{Cone}
\newcommand{\verticalvariable}{y} %z

\numberwithin{chap}{section}
\newtheorem{thm}{Theorem}
\numberwithin{thm}{section}

\newtheorem{prop}[thm]{Proposition}
\newtheorem{defn}[thm]{Definition}
\newtheorem{lem}[thm]{Lemma}

\newtheorem{cor}[thm]{Corollary}

\newtheorem{rmk}{Remark}[section]

\numberwithin{equation}{section}

\DeclarePairedDelimiter{\norm}{\lVert}{\rVert}

\makeatletter
\let\oldnorm\norm
\def\norm{\@ifstar{\oldnorm}{\oldnorm*}}

\makeatother

\begin{document}

\title[BRK-type sets of surfaces of revolution]{Sharp volume bounds for BRK-type sets of surfaces of revolution}

\author{Xianghong Chen}
\address[Xianghong Chen]{Department of Mathematics, Sun Yat-sen University, Guangzhou, Guangdong 510275, P.R. China}
\email{chenxiangh@mail.sysu.edu.cn}

\author{Tongou Yang}
%\thanks{$*$ Corresponding author}
\address[Tongou Yang]{Department of Mathematics\\
Southern University of Science and Technology\\
Shenzhen, Guangdong 518055, P.R. China}
\email{yangto@sustech.edu.cn}

\begin{abstract}
We prove sharp volume bounds (up to dimensional constants) for the $\delta$-neighbourhoods of a general family of Besicovitch-Rado-Kinney (BRK) type sets associated with surfaces of revolution in $\R^n$, $n\ge 3$, which, in particular, include spheres, elliptic paraboloids, and cones. As a result, we obtain sharp $L^{p}$-$L^q$ bounds for their associated Wolff-type maximal operators for all $1\le p,q\le \infty$. In $\R^3$, it is worth noting that our result necessitates a variant of Sogge's cinematic curvature condition.
\end{abstract}

\subjclass[2020]{42B99}

\maketitle

\tableofcontents

\section{Introduction}
One of the main results in this article is as follows.
\begin{thm}\label{thm:sphere_CYZ}
    Let $n\ge 3$. There is a compact set $K\sub \R^{n}$ containing an $(n-1)$-sphere of each radius $1\le r\le 2$, such that the $\delta$-neighbourhood of $K$ has Lebesgue measure at most $C_n |\log \delta|^{-1}$ for $\delta\in (0,1/2)$. This measure upper bound is sharp except for dimensional constants.
\end{thm}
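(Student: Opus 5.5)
Two things need proving: the construction with $|K_\delta|\lesssim_n|\log\delta|^{-1}$, and the matching lower bound $|K_\delta|\gtrsim_n |\log\delta|^{-1}$ for every compact $K$ containing a sphere of each radius in $[1,2]$.

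\textbf{Reduction to a planar problem.} For the upper bound I would use rotational symmetry. Place each sphere $S(c(r),r)$ with centre $c(r)=(a(r),0,\dots,0,b(r))$ in a fixed two-dimensional half-plane. Write points as $x=(x',x_n)$ with $x'\in\R^{n-1}$, and rotate about the $x_n$-axis only in the coordinates $(x_2,\dots,x_{n-1})$. Then the set $K$ is essentially determined by its trace in the plane. The cross-section of each sphere by the plane $\{x_2=\dots=x_{n-1}=0\}$ is a circle of radius $r$ centred at $(a(r),b(r))$. The $\delta$-neighbourhood of the union of spheres is therefore controlled by a weighted planar problem: a circle of radius $r$ centred at $(a,b)$ sweeps out an $(n-1)$-sphere under the rotation, and the volume element carries a Jacobian comparable to $|x_1|^{n-2}$-type weights, which are harmless on compact sets.

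\textbf{Upper bound construction.} The planar problem is then a Kinney/Talagrand-type one: find a compact family of circles, one of each radius $r\in[1,2]$, whose union has $\delta$-neighbourhood of measure $\lesssim |\log\delta|^{-1}$. I would use a Besicovitch-style Perron tree, or more precisely the known construction of Kinney and Talagrand for circles, with the rate $|\log\delta|^{-1}$. The rate comes from the construction in $\log(1/\delta)$ scales, mimicking the Keich construction for lines, where sets of lines in all directions have $\delta$-neighbourhood area $\sim 1/\log(1/\delta)$.

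Concretely, I would take $c(r)=(\phi(r),\,\psi(r))$, where $\phi$ is a Cantor-type or Keich-type piecewise linear function with $2^k\approx \delta^{-1}$ pieces. Chosen this way, all circles are nearly internally tangent along a common arc region, so that circles with nearby radii overlap to second order.

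The key estimate is that two circles whose radii differ by $t$ and whose centres differ by about $t$, in the tangency configuration, intersect in a $\delta$-neighbourhood of size $\sim \delta^{2}/\sqrt{t\cdot\text{(tangency defect)}}$. Summing over dyadic scales, as in Keich's computation, then gives the bound $\sum_{j\le \log(1/\delta)}$, normalised, $\sim 1/\log(1/\delta)$. This also needs the cinematic curvature variant (the rotation family's curvature in the radial parameter), which supplies the needed transversality.

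\textbf{Lower bound.} For sharpness, use the Wolff-type maximal estimate in the plane (restricted weak-type, with a logarithmic loss) for circles. Suppose $E=K_\delta$ contains a $\delta$-neighbourhood of one sphere for each $r\in[1,2]$. Slice at $\delta$-separated radii to get $N\sim\delta^{-1}$ annuli $A_j\subset E$, each with $|A_j|\sim\delta$. By Cauchy--Schwarz,
\[
\Bigl(\sum_j|A_j|\Bigr)^2\le |E|\,\Bigl\|\sum_j 1_{A_j}\Bigr\|_2^2.
\]
The pairwise intersection bound $|A_i\cap A_j|\lesssim \delta^2/(\delta+|r_i-r_j|)$, valid after projecting to the planar slice (annuli of different radii), gives
\[
\Bigl\|\sum_j 1_{A_j}\Bigr\|_2^2\lesssim \sum_{i,j}\frac{\delta^2}{\delta+|i-j|\delta}\sim \delta\log(1/\delta).
\]
Since $\sum_j|A_j|\sim 1$, this yields $|E|\gtrsim 1/\log(1/\delta)$.

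The main obstacle is the upper-bound construction: arranging the centres so that the logarithmic gain is achieved for full $(n-1)$-spheres, not just the circles in the plane. The lower bound's intersection estimate holds in any dimension, since $(n-1)$-spheres meet at least as transversally as circles do.
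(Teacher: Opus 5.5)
\textbf{The gap is in the upper bound, starting with your reduction.} Rotations acting only on $(x_2,\dots,x_{n-1})$ fix the plane $\{x_2=\dots=x_{n-1}=0\}$ pointwise, so a circle in that plane sweeps out only itself. The union of spheres with centres $(a(r),0,\dots,0,b(r))$ is merely $O(n-2)$-invariant, with a three-dimensional orbit space; for $n=3$ there is no reduction at all. Its volume is therefore not a weighted area of the planar union of circles.

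A sphere is a surface of revolution only about axes through its own centre. To get a genuine weighted half-plane picture you would have to put all centres on one axis, i.e.\ use one-dimensional translations. Then planar circle constructions, which rely on two-dimensional translations, are unavailable. Near the equator the spheres are also not graphs in the translation direction, so a graph-type Perron-tree argument does not apply there.

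Apart from the reduction, no construction is actually given. Pairwise intersection sizes of nearly tangent circles, summed over scales, do not bound the measure of a union from above; they feed the C\'ordoba argument, which gives \emph{lower} bounds. Cinematic curvature is likewise an anti-compression tool with no role in building the set. Finally, the statement asks for one compact $K$ valid for all $\delta$, while your set is built for a single $\delta$. The paper passes from the $\delta$-dependent Theorem~\ref{thm:sphere} to the statement by the iterative argument of Yang--Zhong.

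\textbf{The missing idea is to use rotational symmetry locally, cap by cap.}
\begin{itemize}
\item Cover $\mathbb S^{n-1}$ by $O_n(1)$ caps $T(\omega_k)$. In coordinates centred at a sphere with $\omega_k$ vertical, its $k$-th cap is the graph $y=\sqrt{\alpha^2-|x|^2}$, $|x|<1/2$.
\item Translating in direction $\omega_k$ is then a vertical shift of $f(a,r)=\sqrt{\alpha^2-r^2}$. This $f$ satisfies the mixed-difference condition \eqref{eq:compress-1} up to a factor $O(\Delta)$.
\item The one-variable Perron tree of Proposition~\ref{prop:compress} bounds every radial fibre of the union by $O(\Delta/M)$. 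Integrating over $x\in\R^{n-1}$ (Proposition~\ref{prop:rotational}) loses nothing, and this is how rotational symmetry yields $|\log\delta|^{-1}$ rather than $|\log\delta|^{-2/(n-1)}$.
\item The $K$ directions are combined by assigning the $\omega_k$-translation at the $k$-th level of a nested $2^{-M}$-adic partition of $[1,2)$. Coarser levels are constant on each relevant interval, and Lemma~\ref{lem:compress} absorbs the $O(\Delta 2^{-M})$ perturbations from finer levels.
\end{itemize}

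\textbf{Your lower bound} is the standard Kolasa--Wolff/C\'ordoba argument the paper relies on, but two points need fixing.
\begin{itemize}
\item The sum is $\sum_{i,j}\delta^2/(\delta+|i-j|\delta)\sim N\delta\log N\sim|\log\delta|$, not $\delta|\log\delta|$.
\item \eqref{eqn:shell_intersection} is not inherited from planar slices. It fails for circles: internally tangent annuli with radii differing by $t\gg\delta$ overlap in area $\sim\delta\sqrt{\delta/t}\gg\delta^2/t$. It holds because for $n\ge3$ the tangency cap has volume $\sim\delta(\delta/t)^{(n-1)/2}\le\delta^2/t$.
\end{itemize}
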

This answers a previous open problem mentioned in \cite{ChenYangZhong}. Moreover, the same result holds if the spheres of radius $r$ are replaced by a large family of surfaces of revolution with parallel axes of revolution, such as compact pieces of elliptic paraboloids or cones (with two sheets) of aperture $r$. 

The measure upper bound follows from the standard Perron tree construction, with careful translations only in the direction parallel to the rotation axes. The measure lower bound is proved via establishing the corresponding sharp intersection bound and then applying the standard C\'ordoba argument \cite{Cordoba1977}. The intersection bound also allows us to obtain upper bounds for the $L^p$-$L^q$ estimate for the associated Wolff-type maximal operator for all $1\le p,q\le \infty$, which, combined with the aforementioned construction, are therefore sharp up to dimensional constants.

Below, we will discuss the background for these problems and introduce the necessary notation to state our results in the most general form.
%%%%%%%%%%%%%%%%%%%%

\subsection{Background}\label{sec:literature}
Given a prescribed family of geometric configurations in $\R^n$, how small can a set be such that it contains a suitable copy of each subset? For example, a version of the famous Kakeya conjecture states that if $K\sub \R^n$ $(n\ge 2)$ is a compact subset that contains a unit line segment in every direction, then $K$ has Hausdorff and Minkowski dimension $n$. This was solved by Davies \cite{DaviesKakeya} in $n=2$ a few decades ago and recently by Wang-Zahl \cite{WangZahl2025} in $n=3$.

A closely related analogue is the BRK set problem. Introduced by Besicovitch-Rado \cite{BesicovitchRado} and Kinney \cite{Kinney} and later extensively studied by Kolasa and Wolff \cite{Kolasa_Wolff,WolffKakeyaL3}, it states that if $K\sub \R^n$ ($n\ge 2$) is a compact set that contains an $(n-1)$-sphere of radius $r$ for every $1\le r\le 2$, then $K$ must have Hausdorff and Minkowski dimensions $n$.

%%%%%%%%%%

\subsubsection{Sharp volume bounds}
Given a bounded subset $K\sub \R^n$, an equivalent way of saying that $K$ has Lebesgue measure zero is that $\lim_{\delta\to 0}|K(\delta)|=0$, where $K(\delta)$ denotes the $\delta$-neighbourhood of $K$. The exact rate at which $|K(\delta)|$ approaches $0$ as $\delta\to 0$ is an interesting topic to study. An equivalent way of saying that $K$ has full Minkowski dimension is that $|K(\delta)|\ge c_{\eps}\delta^\eps$ for all sufficiently small $\eps,\delta>0$; this holds, for example, when there are constants $c>0$ and $\alpha>0$ such that $|K(\delta)|\ge c|\log \delta|^{-\alpha}$ for all $\delta\in (0,1/2)$.

In the case of Kakeya problems in $\R^2$, Perron \cite{Perron} and Schoenberg \cite{Schoenberg2} (see also \cite{Sawyer1987}, \cite{Keich}) constructed Kakeya type sets $K$ in $\R^2$ that satisfy $|K(\delta)|\lesssim |\log \delta|^{-1}$. On the other hand, C\'ordoba (see \cite{CordobaKakeyalowerbound} and references therein) showed that every Kakeya set $K$ in $\R^2$ satisfies $|K(\delta)|\gtrsim |\log \delta|^{-1}$. This implies that the Kakeya sets constructed by Perron and Schoenberg are minimal in the sense of Minkowski dimension. In $n\ge 3$, very recently, Fernández-Delgado and de la Salle \cite{KakeyaConstructionHigherDimensions} generalised \cite{Perron} and \cite{Schoenberg2} by constructing a Kakeya set $K\sub \R^n$ obeying $|K(\delta)|\lesssim_n |\log \delta|^{1-n}$. Whether this lower bound is sharp is a question even more difficult than the Kakeya conjecture.

%%%%%%%%%%

\subsubsection{Sharp volume bounds for BRK sets}
Although Kolasa and Wolff \cite{Kolasa_Wolff,WolffKakeyaL3} solved the BRK conjecture, the sharp volume bounds for BRK sets is not fully obtained. In the case of spheres, \cite{Kolasa_Wolff} constructed BRK sets $K\sub\R^n$, $n\ge 2$ with 
\begin{equation}\label{eqn:Kolasa_Wolff_BRK_upper}
    |K(\delta)|\lesssim_n |\log \delta|^{-\frac{2}{n-1}}\big
|\log |\log \delta|\big|^{\frac 2 {n-1}}.
\end{equation}
On the other hand, in \cite{Kolasa_Wolff,WolffKakeyaL3}, they also showed that every BRK set $K\sub\R^n$, $n\ge 2$ must satisfy 
\begin{equation}\label{eqn:Kolasa_Wolff_BRK_lower}
    |K(\delta)|\ge 
    \begin{cases}
        c_n|\log \delta|^{-1},\quad & n\ge 3,\\
c_\eps \delta^\eps,\quad & n=2.
    \end{cases}
\end{equation}
The main technique for proving \eqref{eqn:Kolasa_Wolff_BRK_upper} is by creating tangencies between spheres through appropriate translations; inequalities of this type will be referred to as {\it compression} or {\it packing}. On the other hand, the main technique for proving \eqref{eqn:Kolasa_Wolff_BRK_lower} is by using the standard C\'ordoba argument \cite{Cordoba1977}, together with the elementary but important geometric inequality
\begin{equation}\label{eqn:shell_intersection}
    |S_a(\delta)\cap S_b(\delta)|\lesssim \frac{\delta^2}{\delta+|a-b|},\quad \forall \delta\in (0,1),\,\, a,b\in [1,2],
\end{equation}
where $S_a(\delta)$ is any spherical shell in $\R^n$ with radius $a$ and thickness $\delta$. Inequalities of the type \eqref{eqn:Kolasa_Wolff_BRK_lower} will be referred to as {\it anti-compression}.

The upper and lower bounds differ only by a $\log\log$ factor when $n=3$; they differ by a power of $\log$ when $n\ge 4$. Rather surprisingly, contrary to the Kakeya problem, the most difficult BRK problem is when $n=2$, as there does not seem to be a better lower bound than $c_\eps \delta^\eps$ in the current literature. An intuitive explanation for why the case $d=2$ is special can be found in \cite[Remark 3.1]{WolffKakeyasurvey}. 

In \cite{ChenYangZhong}, the authors obtained a $\log \log $ refinement of \eqref{eqn:Kolasa_Wolff_BRK_upper} by constructing BRK sets $K\sub\R^n$, $n\ge 2$ with $|K(\delta)|\lesssim_n |\log \delta|^{-\frac{2}{n-1}}$. This improvement is nontrivial in the sense that it establishes the sharpness of \eqref{eqn:Kolasa_Wolff_BRK_lower} for $n\ge 3$. In this article, particularly Theorem \ref{thm:sphere}, we further improve \eqref{eqn:Kolasa_Wolff_BRK_upper} by showing that the bound \eqref{eqn:Kolasa_Wolff_BRK_lower} is sharp in dimensions $n\ge 4$ except for dimensional constants.

%%%%%%%%%%

\subsubsection{Compression and anti-compression for surfaces of revolution}
One of the advantages of the main theorem of \cite{ChenYangZhong} is that it generalises to many hypersurfaces with nonvanishing Gaussian curvature. However, it is exactly this generality that prevented us from obtaining the sharp bound $|\log \delta|^{-1}$ for spheres, since we did not make use of their rotational symmetry in \cite{ChenYangZhong}. Indeed, in {\S}\ref{sec:compress}, we will demonstrate how the fact that the sphere is a (hyper)surface of revolution can facilitate the constructive proof of Theorem \ref{thm:sphere}. We can also prove something much more general: if $\{t\in I\mapsto {\verticalvariable}=g(a,t):a\in A\}$ is a one-parameter family of curves in $\R^2$ satisfying a very mild regularity condition, then after revolving around the $y$-axis, the hypersurfaces of revolution (denoted $\Gamma_a$) can be translated just in the $y$-direction so that they are packed into a subset of measure $O(|\log \delta|^{-1})$. The case of Theorem \ref{thm:sphere} essentially corresponds to $g(a,t)=\sqrt{a^2-t^2}$.

After proving Theorem \ref{thm:sphere}, it became an immediate question for us what kind of families of functions $g(a,t)$, the bound $O(|\log \delta|^{-1})$ is sharp for the union of the one-parameter family of surfaces of revolution. In \cite{Kolasa_Wolff}, they fully solved the easy question of anti-compression for spheres with radii between $1$ and $2$. Indeed, the curvature of spheres plays an important role in the proof; one important observation is that two spheres of different radii cannot be tangent to the second order. This is later greatly generalised by Sogge \cite{Sogge_first_cinematic} and many other authors to the notion of {\it cinematic curvature} (see Definition \ref{defn:cinematic} and {\S}\ref{sec:cinematic_curvature}).

More than half of this article is devoted to the anti-compression results for generating curves $g(a,t)$ satisfying certain nondegeneracy conditions; see {\S}\ref{sec:surface_of_revolution_away_from_0}--\ref{sec:cones}. In particular, it shows that the sharp volume bounds for (compact pieces of) paraboloids and cones are also of the order $|\log \delta|^{-1}$. In particular, when $n=3$, the cinematic curvature condition is key to ensuring anti-compression.

For related results on the compression of curves/surfaces, we refer to \cite{Kinney}, \cite{Davies}, \cite{Talagrand}, \cite{Sawyer1987}, \cite{WolffKakeyasurvey}, \cite{Wisewell}, \cite{Mattila15}, \cite{ChangCsornyei}, \cite{MR4534746}, \cite{FLO24}, \cite{CDK25}, \cite{Trainor}, \cite{YangZhong}, \cite{Forbes2026}, \cite{IosevichLiTaylor2026}, \cite{GuoGuthNadjimzadahShenZhang2026}, and references therein. 

%%%%%%%%%%%%%%%%%%%%

\subsection{Surfaces of revolution disjoint from the rotation axis}\label{sec:surface_of_revolution_away_from_0}

First, we consider the case where the surface of revolution has a positive distance away from its rotation axis. 
% We first study the case in which there is a uniform lower bound for the distance between the surface of revolution and the axis of revolution. 

\smallskip
We begin by formulating a nondegeneracy condition. 
%Let us begin with the precise formulations, and then state the main theorems of this article. 
For a function $g(a,t)$ defined on $A\times I\sub\R^2$, denote 
\begin{equation*}
\begin{aligned}
    \|g\|_\infty:&=\sup_{(a,t)\in A\times I}{|g(a,t)|},\\    
%    \|g\|_{\Lip_t}:&=\sup_{a\in A}\sup_{\bar t\ne t\in I}\frac{|g(a,\bar t)-g(a,t)|}{|\bar t- t|},\\
%    \|g\|_{\Lip_a}:&=\sup_{t\in I}\sup_{\bar a\ne  a\in A}\frac{|g(\bar a,t)-g( a, t)|}{|\bar a-a|},
    \|g\|_{\Lip}:&=\sup_{(\bar a, \bar t)\ne (a,t)\in A\times I}
    %\sup_{\stackrel{a,\bar a\in A,\,t,\bar t\in I}{(\bar a,\bar t)\ne (a,t)} }
    \frac{\left|g(\bar a,\bar t)-g(a,t)\right|}{|\bar a-a| + |\bar t-t|},
\end{aligned}    
\end{equation*}    
and 
$$\|g\|_{\mathrm{reg}}:=\|g_t\|_\infty+%\|g_{t}\|_{\Lip_a}+\|g_{t}\|_{\Lip_t}
%\|g_{tt}\|_\infty
%+\|g_{tt}\|_{\Lip_a}+\|g_{tt}\|_{\Lip_t}.
\|g_{t}\|_{\Lip}+\|g_{tt}\|_{\Lip}.
$$

\begin{defn}[Nondegenerate function away from the origin]\label{defn:nondegenerate_away_from_0}
    Let $A\sub \R$ and $I\sub [1,2]$ be compact intervals. We call a function $g:A\times I\to \R$ nondegenerate away from the origin if it satisfies %the following conditions for all $(a,t)\in A\times I$:
\begin{enumerate}[label=(\alph*),leftmargin=*]
    \item \label{item:00} 
    %$g_t(a,t)$ and $g_{tt}(a,t)$ exist for all $(a,t)\in A\times I$, and 
    $\|g\|_{\mathrm{reg}}<\infty$, and 
    \item \label{item:gat} (Nonvanishing slope) $g_t(a,t)\ne 0$ for all $(a,t)\in A\times I$, and 
    \begin{equation}\label{eqn:gat}
        \lambda_1:=\inf_{t\in I}\inf_{\bar a\ne a\in A}\frac{|g_t(\bar a,t)-g_t(a,t)|}{|\bar a-a|}>0.
    \end{equation}   
    % \item \label{item:gtt} (Nonzero curvature) $g_{tt}\ne 0$.
    \end{enumerate}
\end{defn}

\smallskip
%We explain these assumptions. 
Condition \ref{item:00} is a regularity condition sufficient for Theorem \ref{thm:anti-compression_high_dimension} to hold. A stronger but simpler condition is $g\in C^3$, 
%has finite $C^3$ norm in $(a,t)$ 
in which case $\|g\|_{\mathrm{reg}}$ is equivalent to $\|g_t\|_{C^1}+\|g_{tt}\|_{C^1}<\infty$. Condition \ref{item:gat} means that the graph of $g(a,\cdot)$ has nonvanishing slope and that $g_{ta}$ (if it exists) is uniformly bounded away from $0$.

\begin{rmk}
A model case is where $g(a,t)=a\cdot g_0(t)$, $1\le a \le 2$. In this case, $g$ is nondegenerate away from the origin in the sense of Definition \ref{defn:nondegenerate_away_from_0} if and only if 
$g_0\in C^{2,1}$ 
%$\|g_0\|_{C^{2,1}}<\infty$ 
and $g_0'(t)\ne 0$ for all $t$. 
In particular, 
%when $g_0(t)=t^p$ ($p\neq 0$), 
$g(a,t)=at^p$, $1\le a,t \le 2$, is nondegenerate away from the origin when $p\neq 0$. 
\end{rmk}

Denote by $I'$ the compact interval with the same right endpoint as $I$ and of length $|I|/2$. For each $a\in A$, denote by $\Gamma_a$ the hypersurface in $\R^n$ obtained by revolving the graph of $g(a,\cdot)$
%$t\in I'\mapsto z=g(a,t)$ 
about the vertical axis; more precisely, 
\begin{equation}\label{eqn:surface_revolution}
    \Gamma_a=\left\{\big(\vectornotation{x},g(a,|\vectornotation{x}|)\big)\in \R^{n-1}\times \R:|\vectornotation{x}|\in I'\right\}.
\end{equation}

\smallskip
As mentioned above in \S\ref{sec:literature}, the standard C\'ordoba argument (\cite{Cordoba1977}) reduces the anti-compression results to volume bounds for the intersection between the $\delta$-neighbourhoods of the surfaces $\{\Gamma_a\}_{a\in A}$.

\smallskip\smallskip

\fbox{Notation} Throughout the remainder of this article, unless otherwise specified, all implicit constants in the disjoint-from-axis case (and in the context of Theorems \ref{thm:full_surface_anti_compression} and \ref{thm:maximal_general_pq}) are allowed to depend on $n$, $|I|$, $\|g\|_{\mathrm{reg}}$, $\inf |g_t|$ and $\lambda_1$ (see Definition \ref{defn:nondegenerate_away_from_0}). 

\begin{thm}[Anti-compression in higher dimensions]\label{thm:anti-compression_high_dimension}
    Let $n\ge 4$. Let $g:A\times I\to \R$ be a nondegenerate function away from the origin (Definition \ref{defn:nondegenerate_away_from_0}). Then for all $a,\bar a\in A$,
    \begin{equation}\label{eqn:translation_bound_away_from_0_high_dimension}
        \sup_{\tau\in \R^n}\big|(\Gamma_a(\delta)+\tau)\cap \Gamma_{\bar a}(\delta)\big|\lesssim  \frac{\delta^2}{\delta+|\bar a-a|},\quad \forall  \delta\in (0,1),
    \end{equation}
    where $\Gamma_a(\delta)$ denotes the $\delta$-neighbourhood of $\Gamma_a$ (see \S\ref{sec:notation}).
\end{thm}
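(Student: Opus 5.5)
Write $\tau=(\tau',\tau_n)\in\R^{n-1}\times\R$ and $\Delta:=\delta+|\bar a-a|$. The plan is to describe $\Gamma_a(\delta)+\tau$ and $\Gamma_{\bar a}(\delta)$, up to constants, as vertical $\delta$-neighbourhoods of graphs over annular regions in $\R^{n-1}$, and then to estimate the set of $X$ where the two graphs are $\delta$-close. Since $\inf|g_t|>0$ and $\|g_t\|_\infty<\infty$, $\Gamma_a(\delta)$ is contained in $\{(X,y): |X|\in I'(C\delta),\ |y-g(a,|X|)|\le C\delta\}$. Hence the intersection is contained in $\{(X,y): |F(X)|\lesssim\delta\}$ times a vertical interval of length $\lesssim\delta$, where
\begin{equation*}
F(X):=g(a,|X-\tau'|)+\tau_n-g(\bar a,|X|),
\end{equation*}
defined on the region where both $|X|$ and $|X-\tau'|$ lie in a slight enlargement of $I'$. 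It therefore suffices to show that
\begin{equation*}
\big|\{X\in\R^{n-1}: |F(X)|\lesssim\delta\}\big|\lesssim \frac{\delta}{\Delta},
\end{equation*}
uniformly in $\tau$.

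Use polar-type coordinates adapted to $\tau'$. Write $X=r\omega$ with $r=|X|\in I'$, and let $s=|X-\tau'|$, so that $s^2=r^2-2r\,\omega\cdot\tau'+|\tau'|^2$. When $n-1\ge3$, the sphere of directions $\omega$ has dimension at least $2$. We use $(r,s)$ as coordinates on the set where $|\tau'|\gtrsim$ small. The pushforward of Lebesgue measure on $\R^{n-1}$ to $(r,s)$ has density comparable to
\begin{equation*}
r^{n-2}\cdot\frac{s}{r|\tau'|}\big(1-u^2\big)^{\frac{n-4}{2}}, \qquad u=\omega\cdot\hat\tau'.
\end{equation*}
For $n\ge4$ this density is bounded by $\lesssim |\tau'|^{-1}$, with no singularity at tangency. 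This is precisely where $n\ge4$ enters: for $n=3$ the factor $(1-u^2)^{-1/2}$ blows up, which is consistent with the cinematic curvature condition being needed there. If $|\tau'|\lesssim\Delta$, or more generally in the small-$|\tau'|$ regime, we instead work directly in $X$ and use the radial derivative.

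We then estimate the sublevel set in the $(r,s)$ variables, where $F=g(a,s)+\tau_n-g(\bar a,r)$. The gradient in $(r,s)$ is $(-g_t(\bar a,r),\,g_t(a,s))$, whose entries are both bounded below. So for fixed $r$ the set of admissible $s$ has length $\lesssim\delta$, and the measure is $\lesssim\delta/|\tau'|$. This gives the claim when $|\tau'|\gtrsim\Delta$.

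When $|\tau'|\ll\Delta$, we Taylor expand: $|X-\tau'|=r-u|\tau'|+O(|\tau'|^2)$. Then
\begin{equation*}
\partial_r F=g_t(a,r)-g_t(\bar a,r)+O(|\tau'|),
\end{equation*}
using the Lipschitz bound on $g_t$. By \eqref{eqn:gat} the main term has size $\ge\lambda_1|\bar a-a|$. So if $|\tau'|\le c\lambda_1|\bar a-a|$, the radial derivative is $\gtrsim|\bar a-a|$, and the radial sublevel set has length $\lesssim\delta/|\bar a-a|$ on each ray. Integrating over directions gives $\lesssim\delta/\Delta$ when $|\bar a-a|\ge\delta$. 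If $|\bar a-a|\le\delta$, the trivial bound $\lesssim\delta$ suffices, since the total measure of the shell is $\lesssim\delta$.

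The main obstacle is the intermediate regime $|\tau'|\sim|\bar a-a|$ together with making the two computations uniform. Specifically, one must check that the coarea density bound $\lesssim|\tau'|^{-1}$ holds uniformly, including near the boundary of the $(r,s)$ domain where $u=\pm1$; this uses the exponent $(n-4)/2\ge0$. The Lipschitz (rather than $C^1$) control of $g_{tt}$ only enters through the error terms in the Taylor expansion, so $\|g\|_{\mathrm{reg}}<\infty$ suffices. With the dichotomy threshold $|\tau'|=c|\bar a-a|$ where $c$ depends on $\lambda_1$ and $\|g_t\|_{\Lip}$, the two regimes cover everything, and the large-$|\tau'|$ bound gives $\delta/|\tau'|\lesssim\delta/\Delta$ there.
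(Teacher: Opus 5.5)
Your proposal is correct, and it takes a genuinely different route from the paper.

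\textbf{The paper's route.} The paper proves the stronger level-set bound of Theorem \ref{thm:Cordoba_size}(1) by a local dichotomy. It covers the domain by $O(1)$ cubes on which each first partial of $f=g(\bar a,\bar r)-g(a,r)$ is either $\gtrsim\sigma(|u|+\Delta)$ or uniformly small. Cubes of the first kind are handled by Proposition \ref{prop:gradient_lower_bound}. In the near-tangent case, Lemma \ref{lem:u_tangent} forces $\Delta\lesssim|u|\sim|\bar r-r|$ and $|y|\lesssim\sigma$. Lemma \ref{lem:hessian_nonzero} then shows that $D^2_y f$ on $\R^{n-2}$ is definite with eigenvalues $\sim|u|$. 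Theorem \ref{thm:sublevel_set_measure}(a), applied on $x$-slices, finishes the argument, and $n\ge 4$ enters only through $\dim y=n-2\ge 2$.

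\textbf{Your route.} You exploit that $F$ depends only on $(r,s)=(|X|,|X-\tau'|)$. The coarea density of this map is $\sim r^{n-3}s\rho^{n-4}/|\tau'|$, where $\rho=r\sqrt{1-u^2}$ is the distance to the line through $0$ and $\tau'$. This density is bounded by $|\tau'|^{-1}$ exactly when $n\ge 4$. After that you need only the monotonicity of $g(a,\cdot)$, plus the radial derivative and $\lambda_1$ when $|\tau'|\le c\lambda_1|\bar a-a|$.

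\textbf{What each buys.} Your argument is more elementary: it uses no Hessian computation, no second-order level-set theorem, and never uses $\|g_{tt}\|_{\Lip}$. Since your two regimes give $\delta/|\tau'|$ and $\delta/|\bar a-a|$ respectively, you also recover the stronger $\delta/(|\tau'|+|\bar a-a|)$ of Theorem \ref{thm:Cordoba_size}(1). The paper's framework buys uniformity: the same dichotomy and Hessian analysis extend to $n=3$ under cinematic curvature. There your density carries the unbounded factor $(1-u^2)^{-1/2}$, so one needs second-order information on $F$ near $u=\pm1$, which is exactly what cinematic curvature supplies. It also extends to the near-axis case, where $\inf|g_t|=0$ and $r,s\not\sim 1$.

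\textbf{Minor points.} The ``intermediate regime'' you flag is not a real obstacle. The $(r,s)$ bound $\delta/|\tau'|$ holds for every $\tau'\neq 0$ with a constant independent of $|\tau'|$, so the fixed threshold $|\tau'|=c\lambda_1|\bar a-a|$ (with $|\bar a-a|\ge\delta$, otherwise the trivial bound applies) closes the argument. Along rays you should record two facts: $g_t(a,\cdot)-g_t(\bar a,\cdot)$ has constant sign on $I$ by continuity, and each ray meets the domain in at most two intervals. Finally, $I'(C\delta)$ protrudes past the right endpoint of $I$. You can handle this by extending $g$ linearly in $t$, or by discarding an $O(\delta^2)$ boundary layer.
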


\begin{rmk}

Theorem \ref{thm:anti-compression_high_dimension} fails when $n=3$. To see this, consider $g(a,t)=ae^t$, for which $(\Gamma_1(\delta)-(\ln 2,0,\cdots,0))\cap \Gamma_{2}(\delta)$ has volume $\sim \delta^{1/2}\gg \delta$. This is due to the simple identity $2e^t=e^{t+\ln 2}$, so that the two surfaces $\Gamma_1-(\ln 2,0,\cdots,0)$ and $\Gamma_2$ intersect tangentially along an entire curve $(t,0,2e^t)$ of length $\sim 1$. 
\end{rmk}

It turns out that the following variant of Sogge's {\it cinematic curvature condition} \cite{Sogge_first_cinematic} is crucial for \eqref{eqn:translation_bound_away_from_0_high_dimension} to hold in $\R^3$.

\begin{defn}[Cinematic curvature condition]\label{defn:cinematic}
Let $A,I\sub \R$ be compact intervals and let $g:A\times I\to \R$ be a function with $\|g\|_{\mathrm{reg}}<\infty$. We say that $g$ satisfies the {\it cinematic curvature} condition if 
\begin{equation}\label{eqn:general_cinematic_curvature}
    \kappa:=\inf_{(\bar a, \bar t)\ne (a,t)\in A\times I}   
    \frac{\left|g_t(\bar a,\bar t)-g_t(a,t)\right|+\left|g_{tt}(\bar a,\bar t)-g_{tt}(a,t)\right|}{|\bar a-a| + |\bar t-t|}>0; 
\end{equation}
furthermore, $g$ is called elliptic (resp. hyperbolic) if for all $\bar a\ne a$ and $\bar t> t$, we have 
\begin{equation*}
    D\cdot\big(g_t(\bar a,t)-g_t(a,t)\big)\cdot g_t(a,t)>0\;\;(\text{resp.} <0),
\end{equation*}
where 
\begin{equation}\label{eqn:defn_D}
    D:=\det \begin{pmatrix}
        g_t(\bar a,\bar t)-g_t(\bar a,t) & g_t(\bar a, t)-g_t( a,t)\\
        g_{tt}(\bar a,\bar t)-g_{tt}(\bar a,t) & g_{tt}(\bar a, t)-g_{tt}( a,t)
    \end{pmatrix}.
\end{equation}
\end{defn}

Note that \eqref{eqn:general_cinematic_curvature} rules out the possibility of second-order tangency between the graphs of $g(a,\cdot)$ and $g(\bar a,\cdot-u)+v$ whenever $\bar a\ne a$.

See {\S}\ref{sec:cinematic_curvature} for more details; see also \cite{Kolasa_Wolff,Zahl_algebraic,Zahl_nonalgebraic,PYZ2022,ChenGuoYang2023,ChenGuo,Zahl2026} (and references therein) for related formulations of the cinematic curvature conditions.

\begin{rmk}
In the model case $g(a,t)=a\cdot g_0(t)$, if $g_0\in C^3$ and $|I|$ is small enough, then $g$ satisfies \eqref{eqn:general_cinematic_curvature} if and only if $(g_0'')^2-g_0'g_0'''\ne 0$, and is elliptic (resp. hyperbolic) if and only if $(g_0'')^2-g_0'g_0'''> 0$ (resp. $<0$). 
In particular, $g(a,t)=a t^p$ is elliptic when $p\in (1,\infty)$, and hyperbolic when $p\in (-\infty,1)-\{0\}$. 
\end{rmk}

We now give an analogue of Theorem \ref{thm:anti-compression_high_dimension} for $n=3$.

\begin{thm}[Anti-compression in $\R^3$]\label{thm:anti-compression_3D}
    Let $g$ be a nondegenerate function away from the origin that satisfies the cinematic curvature condition (Definition \ref{defn:cinematic}). 
    \begin{enumerate}[label=(\alph*),leftmargin=*]
        \item If $g$ is elliptic, then 
    \eqref{eqn:translation_bound_away_from_0_high_dimension} holds, 
    where the implicit constant depends additionally on $\kappa$.

    \item If $g$ is hyperbolic and satisfies 
    (i) \,{$(g_{tta},g_{ttt})$ exists everywhere, and}
  
    (ii)\,{the maps $\{t\mapsto (g_{tta},g_{ttt})\}_{a\in A}$}
   
    {are equicontinuous.}
   
    Then for all $a,\bar a\in A$, we have 
    \begin{equation}\label{eqn:translation_bound_away_from_0_3D_hyperbolic}
        \sup_{\tau\in \R^3}\big|(\Gamma_a(\delta)+\tau)\cap \Gamma_{\bar a}(\delta)\big|\lesssim \frac{\delta^2 |\log \delta|}{\delta+|\bar a-a|},\,\, \forall\delta\in (0,1/2),
    \end{equation}
    where the implicit constant depends additionally on $\kappa$ and the uniform modulus of continuity of $t\mapsto (g_{tta},g_{ttt})$.
    \end{enumerate}
    
\end{thm}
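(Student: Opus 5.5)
We reduce the three-dimensional intersection estimate to a planar sublevel-set problem for the generating curves. Write $\tau=(\tau',\tau_3)\in\R^2\times\R$ and $u:=|\tau'|$. After a rotation about the vertical axis we may assume $\tau'=(u,0)$. Parametrize points of $\R^3$ by polar coordinates $X=\rho\,\omega$ with $\rho\in I'$ and $\omega=(\cos\theta,\sin\theta)$. Since $|g_t|\gtrsim1$, the set $\Gamma_{\bar a}(\delta)$ is essentially the region $\{|y-g(\bar a,\rho)|\lesssim\delta\}$. Likewise $\Gamma_a(\delta)+\tau$ is essentially $\{|y-\tau_3-g(a,|X-\tau'|)|\lesssim\delta\}$. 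The intersection is therefore contained in the set where both defining functions are $O(\delta)$. For fixed $\rho$, this amounts to requiring
\[
F(\rho,\theta):=g(\bar a,\rho)-g\bigl(a,\sqrt{\rho^2-2u\rho\cos\theta+u^2}\bigr)-\tau_3
\]
to be $O(\delta)$, while $y$ ranges over an interval of length $\delta$. Hence the measure is $\lesssim\delta\cdot|\{(\rho,\theta):|F|\lesssim\delta\}|$, and the problem becomes a two-dimensional sublevel-set estimate. We aim to show $|\{|F|\lesssim\delta\}|\lesssim\delta/(\delta+|\bar a-a|)$ in the elliptic case, with an extra $|\log\delta|$ factor in the hyperbolic case.

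We split by the size of $u$. If $u\lesssim|\bar a-a|$ is tiny, then $\partial_\rho F=g_t(\bar a,\rho)-g_t(a,r)+O(u)$ with $r\approx\rho$. Condition \eqref{eqn:gat} gives $|\partial_\rho F|\gtrsim|\bar a-a|$, so each $\theta$-slice of the sublevel set has length $\lesssim\delta/|\bar a-a|$. The trivial bound $\lesssim\delta$ per slice also holds since $|\partial_\rho F|\gtrsim$ something comparable when $|\bar a -a|\lesssim \delta$; if not, we use $\delta\le \delta/(\delta+|\bar a - a|)$ up to constants, which is fine. In the generic case $u\gtrsim|\bar a-a|$, we use the $\theta$-derivative $\partial_\theta F=-g_t(a,r)\,u\rho\sin\theta/r$. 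Away from $\theta\in\{0,\pi\}$ this is $\gtrsim u|\sin\theta|$, which already controls the sublevel set except near $\theta\approx0,\pi$. Near those angles, the two surfaces are compared along the plane containing both axes, and $F$ restricted to that plane is the difference of the planar curves $g(\bar a,\cdot)$ and $g(a,\cdot\mp u)+\tau_3$. In polar coordinates we have $F\approx F_0(\rho)+c(\rho)u\theta^2$, where $F_0(\rho)=g(\bar a,\rho)-g(a,\rho\mp u)-\tau_3$ and $c(\rho)\sim g_t(a,\cdot)\rho/(\rho\mp u)$. The sublevel set in $\theta$ for fixed $\rho$ therefore has length $\lesssim\min\bigl(\sqrt{\delta/u},\,\delta/\sqrt{u|F_0|}\bigr)$ when $cF_0<0$, and is empty or small otherwise.

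The main obstacle is integrating this bound in $\rho$. This requires a sublevel estimate for $F_0$ with its first two derivatives, and this is exactly where cinematic curvature enters. By \eqref{eqn:general_cinematic_curvature}, $|F_0'|+|F_0''|\gtrsim |\bar a-a|+u$, so $F_0$ has at most one near-double zero, where $F_0'$ is small but $F_0''$ is large. In the elliptic case the sign condition $D\cdot(g_t(\bar a,t)-g_t(a,t))g_t>0$ should force the vertex of the quadratic-in-$\rho$ behaviour of $F_0$ to lie on the side where $cF_0>0$. We would verify this by computing $D$ as the Jacobian of $(u,\bar a)\mapsto(F_0',F_0'')$. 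With the vertex on that side, the set $\{\rho: F_0\approx -c u\theta^2\}$ avoids the degenerate tangency, and integrating $\min(\sqrt{\delta/u},\delta/\sqrt{u|F_0|})$ against $|F_0'|\gtrsim |\bar a-a|$ gives $\delta/(\delta+|\bar a-a|)$. In the hyperbolic case the vertex lies on the bad side, so the surfaces can be tangent near a point in a saddle-like fashion. The same integral then produces $\int_\delta^1 ds/s$, which is the source of the $|\log\delta|$ in \eqref{eqn:translation_bound_away_from_0_3D_hyperbolic}. There the extra hypotheses on $(g_{tta},g_{ttt})$ are used, via a Taylor expansion with uniform remainder, so that the quadratic model of $F$ in $(\rho-\rho_0,\theta)$ holds on a uniform neighbourhood of the tangency. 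Finally, the bounds are uniform in $\tau$, which gives the stated supremum.
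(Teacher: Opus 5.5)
Your polar slicing is a genuinely different route from the paper's, and most of it is sound. The following steps are fine:
\begin{itemize}
\item the reduction to a planar sublevel set;
\item the case $u\ll\Delta$ via $\partial_\rho F$ (the ``trivial'' per-slice bound there is $O(1)$, not $O(\delta)$, which is all you need when $\Delta\lesssim\delta$);
\item the region $|\sin\theta|\gtrsim\sigma$ via $\partial_\theta F$;
\item the per-$\rho$ bound $\min(\sqrt{\delta/u},\delta/\sqrt{u|F_0|})$ near $\theta\in\{0,\pi\}$, which only uses that $F$ is monotone in $s=1-\cos\theta$ with $|\partial_sF|=|g_t(a,r)|u\rho/r\sim u$.
\end{itemize}

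The gap is in the one step that carries the content of the theorem: the analysis near a critical point $\rho_0$ of $F_0$.

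First, ellipticity cannot make the level set ``avoid the degenerate tangency''. Since $\tau_3$ is arbitrary, $F_0(\rho_0)$ can be $0$ or of either sign, and the surfaces can genuinely touch. What ellipticity controls is the sign of $F_0''$ wherever $|F_0'|<\sigma u$. This comes from an exact identity, not a Jacobian computation. Put $\bar t=\rho_0$, $t=\rho_0-u$, $P=g_t(\bar a,t)-g_t(a,t)$ and $Q=g_{tt}(\bar a,t)-g_{tt}(a,t)$. Then $D=F_0'(\rho_0)\,Q-P\,F_0''(\rho_0)$. We have $|P|\ge\lambda_1\Delta$, $|Q|\lesssim\Delta$, and $|F_0''|\ge(\kappa-\sigma)u$ by \eqref{eqn:general_cinematic_curvature}. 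So for $\sigma$ small the ellipticity condition forces $F_0''g_t<0$. That is, $F_0''$ has the sign of $\partial_sF=-g_t(a,r)u\rho/r$, and the $(\rho,\theta)$-Hessian is definite. Near $\theta=\pi$, swap the roles of $(a,t)$ and $(\bar a,\bar t)$. This is the paper's $D\approx D'$ step.

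Second, integrating against $|F_0'|\gtrsim|\bar a-a|$ is invalid exactly at $\rho_0$, where $F_0'=0$. The contribution there is not negligible: in the elliptic case with $F_0(\rho_0)$ of the good sign, the sublevel set is a thin annulus around an ellipse of size $\sqrt{|F_0(\rho_0)|/u}$.

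The repair is the quantitative form of cinematic curvature. On each of the $O(1/\sigma)$ components of $\{|F_0'|<\sigma u\}$, $F_0''$ has a fixed sign and size $\sim u$. Hence $|F_0'(\rho)|\gtrsim u|\rho-\rho_0|$ and $|F_0(\rho)-F_0(\rho_0)|\sim u|\rho-\rho_0|^2$. Substituting $w=F_0(\rho)$ (with $m=|F_0(\rho_0)|$) gives:
\begin{itemize}
\item in the elliptic case, $\frac{\delta}{u}\int_{-\delta}^{m}\frac{dw}{\sqrt{\max(|w|,\delta)\max(m-w,\delta)}}=O(\delta/u)$, an arcsine-type integral;
\item in the hyperbolic case, $O(\delta/u)+\frac{\delta}{u}\int_\delta^{Cu}\frac{dw}{w}\lesssim\frac{\delta}{u}|\log\delta|$;
\item on $\{|F_0'|\ge\sigma u\}$, a direct change of variables giving $O(\delta/(\sigma u))$.
\end{itemize}
Since $u\gtrsim\Delta$ in this case, these are the claimed bounds.

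Compare this with the paper. It partitions into $\sigma$-cubes, uses Proposition \ref{prop:gradient_lower_bound} where some first derivative is large, proves Lemma \ref{lem:hessian_nonzero} on the near-tangent cubes, and then applies Theorem \ref{thm:sublevel_set_measure} as a black box. Once repaired, your route is more hands-on but also more informative. It uses only $|g_t|\sim1$ and the $C^{1,1}$ structure of $F_0$. So the hyperbolic $|\log\delta|$ bound should follow without hypotheses (i)--(ii), which the paper needs only to invoke the continuous-Hessian case of Theorem \ref{thm:sublevel_set_measure}. Your proposed Taylor expansion with uniform remainder is therefore unnecessary. The paper's argument, in turn, is coordinate-free and yields $n\ge 4$ from the same Hessian lemma.
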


%%%%%%%%%%%%%%%%%%%%

\subsection{Surfaces of revolution intersecting the rotation axis}
Next, we turn to the case where the surface of revolution intersects its rotation axis.

\begin{defn}[Nondegenerate function near the origin]\label{defn:nondegenerate_near_0}
    Let $A\sub \R$ be a compact interval and let $I=[0,b]$ for some $0<b\le 1$. We call a function $g:A\times I\to \R$ nondegenerate near the origin if it satisfies: 
\begin{enumerate}[label=(\alph*),leftmargin=*]
    \item \label{item:gttta} 
    $\|g\|_{\mathrm{reg}}<\infty$,
    and there exists a constant $L>0$ such that for all 
    $(a,t), (\bar a, \bar t)\in A\times I$,  
    \begin{equation}\label{eqn:extra_regularity}
        \left|\big(g_{tt}(\bar a,\bar t)-g_{tt}( a,\bar t)\big)-\big(g_{tt}(\bar a,t)-g_{tt}( a,t)\big)\right|\le L|\bar a-a||\bar t-t|;
    \end{equation}
    
    \item \label{item:gt=0} (Vanishing slope at the origin) $g_t(a,0)=0$;  
    \item (Nonvanishing curvature) \label{item:gttgatt} $g_{tt}(a,t)\ne 0$ for all $(a,t)\in A\times I$, and 
    \begin{equation}\label{eqn:defn_L2}
        \lambda_2:=\inf_{t\in I}\inf_{\bar a\ne a\in A} \left|\frac{g_{tt}(\bar a ,t)-g_{tt}(a,t)}{\bar a-a}\right|>0.
    \end{equation}
    \end{enumerate}
\end{defn}
\eqref{eqn:extra_regularity} holds, for example, when $g_{ttta}$ exists and is bounded. 
Condition \ref{item:gt=0} ensures that
revolving the graph of $g(a,\cdot)$ produces a surface with no conical singularity. \eqref{eqn:defn_L2} means that $g_{tta}$ (if it exists) is uniformly bounded away from $0$. Intuitively, if $g$ is a nondegenerate function near the origin, 
then the graphs of $g(a,\cdot)$ produce a family of ``approximate paraboloids''
of varying apertures.

\smallskip\smallskip
\fbox{Notation} Throughout the remainder of this article, unless otherwise specified, all implicit constants in the near-axis case (and in the context of Theorems \ref{thm:full_surface_anti_compression} and \ref{thm:maximal_general_pq}) are allowed to depend on $n$, $b$, $\|g\|_{\mathrm{reg}}$, $L$, $\inf |g_{tt}|$ and $\lambda_2$.

\begin{thm}[Anti-compression near the rotation axis]\label{thm:anti-compression_near_0}
    Let $n\ge 3$. Let $g:A\times [0,b]\to \R$ be a nondegenerate function near the origin (Definition \ref{defn:nondegenerate_near_0}). 
    Denote for $a\in A$ and $\eps\in (0, b]$,
\begin{equation}\label{eqn:Gamma_near_axis}
    \Gamma_{a,\eps}:=\left\{\big(\vectornotation{x},g(a,|\vectornotation{x}|)\big)\in \R^{n-1}\times \R:|\vectornotation{x}|\le \eps\right\}. 
\end{equation}

Then there exists $\eps\sim 1$ such that for all $a,\bar a\in A$,
    \begin{equation}\label{eqn:translation_bound_near_zero}
        \sup_{\tau\in \R^n}\big|(\Gamma_{a,b}(\delta)+\tau)\cap \Gamma_{\bar a,\eps}(\delta)\big|\lesssim \frac{\delta^2}{\delta+|\bar a-a|},\quad \forall  \delta\in (0,1).
    \end{equation}
\end{thm}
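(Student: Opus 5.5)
We prove \eqref{eqn:translation_bound_near_zero} by writing both neighbourhoods as vertical slabs over horizontal sets and bounding the measure of the set of $X$ on which the two height functions nearly agree. Write $\tau=(T,\sigma)\in\R^{n-1}\times\R$ and put $h:=|\bar a-a|$. Since $|\nabla_X g(a,|X|)|\lesssim 1$, both surfaces are Lipschitz graphs over $\R^{n-1}$, so $\Gamma_{a,b}(\delta)$ sits inside a vertical $O(\delta)$-neighbourhood of its graph, and similarly for $\Gamma_{\bar a,\eps}(\delta)$. The intersection therefore has measure at most $\delta$ times the measure of
\begin{equation*}
E:=\big\{X\in\R^{n-1}: |X|\le \eps+\delta,\ |F(X)|\lesssim\delta\big\},\qquad F(X):=g(a,|X-T|)+\sigma-g(\bar a,|X|).
\end{equation*}
It thus suffices to show $|E|\lesssim \delta/(\delta+h)$. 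This is trivial when $h\lesssim\delta$, so we may assume $h\gg\delta$.

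To analyse $F$, we expand both terms to second order near the origin, using $g_t(a,0)=0$. The key quantity is the Hessian $\nabla^2F(X)=H_a(X-T)-H_{\bar a}(X)$, where $H_a(Y)$ is the Hessian of $Y\mapsto g(a,|Y|)$. This matrix has eigenvalue $g_{tt}(a,|Y|)$ in the radial direction and $g_t(a,|Y|)/|Y|$ in the tangential directions. Both eigenvalues equal $g_{tt}(a,0)$ plus an error of size $O(|Y|)$, which follows from $\|g\|_{\mathrm{reg}}<\infty$.

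We then split according to $|T|$.

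\emph{Case $|T|\lesssim \eps$.} For $X$ in $B(0,\eps+\delta)$, both $|X|$ and $|X-T|$ are $O(\eps)$. Hence
\begin{equation*}
\nabla^2F=\big(g_{tt}(a,0)-g_{tt}(\bar a,0)\big)I+O(\eps)\,\text{(terms)},
\end{equation*}
and by \eqref{eqn:defn_L2} the main term has size at least $\lambda_2h$. To get relative smallness, the $O(\eps)$ error must be compared with $h$, not just be $O(\eps)$. This is what \eqref{eqn:extra_regularity} provides: it controls $g_{tt}(\bar a,s)-g_{tt}(a,s)$ uniformly in $s$ with error $L h|s-s'|$. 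Combining it with the differences of $g_t/t$, written as averages of $g_{tt}$, we expect
\begin{equation*}
\nabla^2F=c\,h\,(I+O(\eps))+\big(H_a(X-T)-H_a(X)\big),\qquad |c|\ge\lambda_2 .
\end{equation*}
The last term is $O(|T|)$ and is not small relative to $h$, and handling it is the main obstacle. Our first attempt is to eliminate $T$ by a change of variables. Since $g_t(a,0)=0$, the gradient difference $\nabla F(X)\approx (g_{tt}(a,0)-g_{tt}(\bar a,0))X-g_{tt}(a,0)T$ is an affine map of $X$. Along any line through the critical point $X_0$ of $F$, the derivative $\partial_r F$ should then grow like $h\,|X-X_0|$. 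A one-dimensional sublevel-set estimate (van der Corput) along rays from $X_0$ then gives a set of radial length $\lesssim \delta/(h r)$ at distance $r$ from $X_0$, together with a ball of radius $\sqrt{\delta/h}$. Integrating in polar coordinates, with $n-1\ge2$, yields $|E|\lesssim \delta/h$.

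\emph{Case $|T|\gg\eps$.} Here $X-T$ lies at distance $\sim|T|$ from the origin, so $\Gamma_{a,b}$ is no longer near its vertex. On $B(0,\eps)$ the gradient of $F$ is then
\begin{equation*}
\nabla F= g_t(a,|X-T|)\frac{X-T}{|X-T|}+O(\eps).
\end{equation*}
Since $g_{tt}\neq0$ and $g_t(a,0)=0$, we have $|g_t(a,s)|\gtrsim s$, so $|\nabla F|\gtrsim |T|\gg\eps$ once $\eps$ is chosen small. Thus $F$ is transversal, and $|E|\lesssim \delta\,\eps^{n-2}/|T|$. This is at most $\delta/h$ because $h\lesssim1$ and $|T|\gtrsim\eps\sim 1$.

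The choice of $\eps\sim1$ enters in two places: it keeps all error terms below $\lambda_2 h/2$ in the first case, and it makes the transversality threshold in the second case effective. The step most likely to need care is the quantitative ray-wise control in the first case, namely $\partial_r^2F\gtrsim h$ along rays from $X_0$ uniformly in $T$. This is where \eqref{eqn:extra_regularity} must be used in full.
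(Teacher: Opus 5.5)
Your reduction to the horizontal set $E$ and your second case ($|T|\gg\eps$, where $|\nabla F|\gtrsim|T|$) are fine. The first case, however, has a genuine gap, located exactly at the obstacle you flag. You propose to prove $\partial_r^2F\gtrsim h$ along rays from $X_0$, uniformly in $T$, on the whole range $|T|\lesssim\eps$. That step fails: when $h\ll|T|$, the term $H_a(X-T)-H_a(X)$ really does dominate, and $\nabla^2F$ changes signature inside $B(0,\eps)$. Take for instance $g(a,t)=at^2+t^3$ on $[1,2]\times[0,1]$. It is nondegenerate near the origin, with $\lambda_2=2$ and \eqref{eqn:extra_regularity} holding with $L=0$, so the extra regularity cannot rescue you. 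Here $H_a(Y)=2aI+3(|Y|I+Y\otimes Y/|Y|)$. For $\bar a>a$ and $\hat T=T/|T|$, the Hessian $\nabla^2F$ tends to $-2hI+3|T|(I+\hat T\otimes\hat T)$ as $X\to0$, and to $-2hI-3|T|(I+\hat T\otimes\hat T)$ as $X\to T$. The affine model of $\nabla F$ cannot replace the Hessian either. It is accurate only up to $O(\eps|T|+\eps^2h)$, so when $X_0$ lies in the ball it says nothing on the region $|X-X_0|\lesssim\eps^2$, whose measure is not $\lesssim\delta/h$.

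The missing idea is that the right threshold is $|T|\sim\eps h$, not $|T|\sim\eps$. The zero $X_0$ of your affine model has $|X_0|\sim|T|/h$, so it enters $B(0,C\eps)$ only when $|T|\lesssim\eps h$; this is the paper's split into $|u|\le\eps\Delta$ and $|u|\gg\eps\Delta$.
\begin{itemize}
\item \emph{Case $|T|\le K\eps h$.} Your bad term is $O(|T|)=O(K\eps h)$, by Proposition \ref{prop:C2_norm_bdd_0} applied with $\bar a=a$. Choosing $\eps\ll 1/K$ gives $\nabla^2F=-c_0I+O(K\eps h)$ with $|c_0|\ge\lambda_2h$. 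So the Hessian is definite with eigenvalues $\sim h$ (Lemma \ref{lem:hessian_nonzero_near_0}). Then Theorem \ref{thm:sublevel_set_measure}(a) after rescaling gives $\delta/h$; so does your ray argument, run from the extremum of $F$ on the convex domain.
\item \emph{Case $|T|\ge K\eps h$.} No second-order information is needed: $|\nabla F|\gtrsim|T|$ on the ball. Your own affine expansion already shows this, since its error is $\ll|T|$ there. The paper obtains it for all $T$ at once from the identity \eqref{eqn:26-05-08} (Proposition \ref{prop:u_large_0}). Proposition \ref{prop:gradient_lower_bound} then gives $\delta/|T|\lesssim_\eps\delta/h$. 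Your second case is a subcase of this regime.
\end{itemize}
Finally, if you keep the ray argument for $n=3$, use that each ray from the extremum meets the level set in a single interval, on which $\int r\,dr\le (ch)^{-1}\int\partial_rF\,dr\lesssim\delta/h$. Summing $r\cdot\delta/(hr)$ over all dyadic shells would lose a factor $|\log(\delta/h)|$.
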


Combining Theorems \ref{thm:anti-compression_high_dimension}, \ref{thm:anti-compression_3D} and \ref{thm:anti-compression_near_0} gives the following.
\begin{thm}[Intersection bound for the full surfaces]\label{thm:full_surface_anti_compression}
    Let $g:A\times [0,b]\to \R$ be a nondegenerate function near the origin (Definition \ref{defn:nondegenerate_near_0}).
    If $n\ge 4$, or if $n=3$ and $g$ satisfies the elliptic cinematic curvature condition in the sense of Definition \ref{defn:cinematic}, then \eqref{eqn:translation_bound_away_from_0_high_dimension} holds, where $\Gamma_{a}=\Gamma_{a, b}$ as in Theorem \ref{thm:anti-compression_near_0}.
\end{thm}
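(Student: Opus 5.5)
We will prove the bound by cutting each full surface $\Gamma_a=\Gamma_{a,b}$ into finitely many pieces, each of which is controlled by one of Theorems \ref{thm:anti-compression_high_dimension}, \ref{thm:anti-compression_3D} or \ref{thm:anti-compression_near_0}, and then summing over pairs of pieces with the triangle inequality. Fix the $\eps\sim 1$ given by Theorem \ref{thm:anti-compression_near_0}. We cover $[0,b]$ by $[0,\eps]$ together with finitely many compact intervals $J_1,\dots,J_N\subset[\eps/2,b]$, with $N\lesssim 1$. The intervals are chosen short enough that the hypotheses of the away-from-axis theorems hold on each $A\times J_k$.

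The first step is to check those hypotheses on each $J_k$. The radial variable now ranges over $J_k$ rather than a subinterval of $[1,2]$. We handle this by rescaling $t\mapsto t/\eps$, which changes constants by factors depending only on $\eps\sim1$. Alternatively, we simply observe that the proofs only use that $t$ is bounded above and below. Nonvanishing slope holds because $g_t(a,0)=0$ and $g_{tt}\neq0$, so $|g_t(a,t)|\ge t\inf|g_{tt}|\gtrsim \eps$ for $t\ge\eps/2$. The $\lambda_1$ condition follows by integrating \eqref{eqn:defn_L2} from $0$: since $g_t(\bar a,0)-g_t(a,0)=0$ and $g_{tt}(\bar a,\cdot)-g_{tt}(a,\cdot)$ has constant sign by continuity, we get $|g_t(\bar a,t)-g_t(a,t)|\ge \lambda_2 t|\bar a-a|\gtrsim\eps|\bar a-a|$. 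For $n=3$ we also need the elliptic cinematic curvature condition on each $A\times J_k$, and this is inherited from the hypothesis on $A\times[0,b]$. Since $I'$ in \eqref{eqn:surface_revolution} is the right half of $I$, we take $I_k\supset J_k$ of twice the length, so that $J_k=I_k'$, keeping $I_k\subset(0,b]$ (possible after shrinking $J_k$).

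The second step is the decomposition of the intersection. Write $\Gamma_a=\Gamma_{a,\eps}\cup\bigcup_k\Gamma_a^{(k)}$, where $\Gamma_a^{(k)}$ is the piece with $|X|\in J_k$. Then
$$\big|(\Gamma_a(\delta)+\tau)\cap\Gamma_{\bar a}(\delta)\big|\le \sum_{P,Q}\big|(P(\delta)+\tau)\cap Q(\delta)\big|,$$
where $P$ ranges over the pieces of $\Gamma_a$ and $Q$ over the pieces of $\Gamma_{\bar a}$. There are three kinds of terms:
\begin{enumerate}[label=(\roman*),leftmargin=*]
\item If $Q=\Gamma_{\bar a,\eps}$, we bound $P(\delta)\subset\Gamma_{a,b}(\delta)$ and apply Theorem \ref{thm:anti-compression_near_0} directly.
\item If $P=\Gamma_{a,\eps}$ and $Q$ is an outer piece, we translate by $-\tau$ and swap the roles of $a$ and $\bar a$. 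Since $|(P(\delta)+\tau)\cap Q(\delta)|=|P(\delta)\cap(Q(\delta)-\tau)|$, this reduces to case (i) with $a,\bar a$ interchanged, and the right side of \eqref{eqn:translation_bound_away_from_0_high_dimension} is symmetric in $a,\bar a$.
\item If both $P,Q$ are outer pieces on $J_k,J_l$, we apply Theorem \ref{thm:anti-compression_high_dimension} or Theorem \ref{thm:anti-compression_3D}(a) on a common interval. This is the case that needs care.
\end{enumerate}

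The main obstacle is case (iii) with $k\neq l$, since the away-from-axis theorems compare $\Gamma_a$ and $\Gamma_{\bar a}$ over the same radial interval. The simplest fix is to apply those theorems with the single interval $I=[\eps/4,b]$ (or $[\eps/2,b]$ after rescaling) covering all outer pieces at once, so that no mixed-interval term arises. That in turn requires $I'$ to contain $[\eps/2,b]$. If the length convention of $I'$ makes this impossible, we will iterate the argument near the axis: the region $|X|\in[\eps/2,b/2]$ is again handled by covering with boundedly many dyadic-type intervals and one uniform interval on each scale. Because the number of pieces is $O(1)$, there are only $O(1)$ terms in total. Each is bounded by $\lesssim \delta^2/(\delta+|\bar a-a|)$, and summing them gives the claimed bound.
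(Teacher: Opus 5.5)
Your architecture matches the paper's, which simply says the full-surface bound follows by combining Theorems \ref{thm:anti-compression_high_dimension}, \ref{thm:anti-compression_3D} and \ref{thm:anti-compression_near_0}. Your Step 1 verification of the hypotheses and your cases (i) and (ii) are fine.

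The gap is case (iii) with $k\neq l$. You flag it correctly but do not close it.

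\textbf{The single-interval fix cannot work.} An admissible $I=[\alpha,\beta]\subset(0,b]$ must have $\alpha>0$, because $g_t(a,0)=0$ violates nonvanishing slope. Its right half $I'=[(\alpha+\beta)/2,\beta]$ therefore satisfies $\max I'<2\min I'$. So no admissible $I'$ contains $[\eps/2,b]$, and none contains $[\eps,b]$ once $\eps\le b/2$.

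\textbf{The fallback does not close it either.} Covering by dyadic-type intervals with one uniform interval per scale only handles pairs of pieces at comparable radii. It still leaves pairs such as $\Gamma_a$ at $|X|\approx\eps$ against $\Gamma_{\bar a}$ at $|X|\approx b$. These can meet after a horizontal translation with $|u|\approx b$, and the bound must be uniform in $\tau$. None of the three theorems, as stated, compares $\Gamma_a$ and $\Gamma_{\bar a}$ over different radial ranges. So these cross terms are left unbounded.

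\textbf{The missing observation.} The away-from-axis argument never needs $r$ and $\bar r$ to lie in a short common interval. Theorem \ref{thm:Cordoba_size} is proved for $f(x,y)=g(\bar a,\bar r)-g(a,r)$ on any convex $\Omega$ on which $r,\bar r\in I$. The normalisation $I\subset[1,2]$ serves only to give $r,\bar r\sim 1$. The half-interval convention serves only to give an $O(1)$ inner buffer for the convex covering in Remark \ref{rmk:Omega}.

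\textbf{How to finish.} Take $I=[\eps/2,b]$; your Step 1 shows all hypotheses hold there, with constants depending on $\eps\sim1$. Cover
\begin{equation*}
\{\eps\le|X|\le b\}\cap\{\eps\le|X-ue_1|\le b\}
\end{equation*}
by $O(1)$ convex sets of the form (small cube)$\,\cap B(0,b)\cap B(ue_1,b)$, on which $r,\bar r\in[\eps/2,b]$. Then run that proof on each such set. This bounds the whole far--far contribution in one step. You are left with only the near--near, near--far, far--near and far--far terms, and no mixed-interval terms arise.
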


\begin{rmk}
    If we add a reasonable regularity to $g$, say $g\in C^3$, then the ellipticity may be dropped from the assumption of Theorem \ref{thm:full_surface_anti_compression}. The main reason is that Definition \ref{defn:nondegenerate_near_0} guarantees ellipticity near $0$, so by continuity, ellipticity holds on the whole interval $I$. See Proposition \ref{prop:equivalent_cinematic}.
\end{rmk}

%%%%%%%%%%%%%%%%%%%%

\subsection{The case of cones}\label{sec:cones}
Note that Theorem \ref{thm:full_surface_anti_compression} does not apply to the cones with $g(a,t)=at$, $t\in [0,1]$, due to the conical singularity at the origin. However, this specific case can be handled directly using tools from geometric measure theory. More precisely, we have the following. 

\begin{thm}[Intersection bound for the cones]\label{thm:light_cone_compression}
    Let $n\ge 2$.\footnote{The case $n=2$ corresponds to the 2D Kakeya problem.} 
    For $a>0$, denote 
$${\conenotation}_a:=\{(\vectornotation{x},{\verticalvariable})\in \R^{n-1}\times \R: |{\verticalvariable}|=a|\vectornotation{x}|\}.$$ 
Then for all $a,\bar a\in (0,1]$ and any ball $B_1\sub \R^n$ of radius $1$,
\begin{equation}\label{eqn:intersection_bound_cone}
        \sup_{\tau\in \R^n}\big|\big({\conenotation}_a(\delta)+\tau\big)\cap {\conenotation}_{\bar a}(\delta)\cap B_1\big|\lesssim\frac{\delta^2}{\delta+|\bar a-a|},\quad \forall  \delta\in (0,1).
    \end{equation}
\end{thm}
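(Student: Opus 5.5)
We prove Theorem \ref{thm:light_cone_compression} by decomposing each cone into planar slabs through its vertex, which reduces the estimate to intersections of thin plates. The factor $1/(\delta+|\bar a-a|)$ will come from the angle between the two cones' normals.

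First, the trivial bound. Each truncated shell ${\conenotation}_{\bar a}(\delta)\cap B_1$ has volume $\lesssim\delta$. This handles $|\bar a-a|\lesssim\delta$, so we assume $\sigma:=|\bar a-a|\gg\delta$.

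Next, localize. Write $\vectornotation{x}=r\omega$ with $\omega\in S^{n-2}$ and $r\ge0$. The upper sheet of ${\conenotation}_a$ is foliated by the rays $\{(r\omega,ar):r\ge0\}$. The unit normal along such a ray is $N_a(\omega)=(1+a^2)^{-1/2}(-a\omega,1)$, which depends only on $\omega$.

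Cover $S^{n-2}$ by caps $\theta$ of radius $\rho$, with $\rho\sim\sqrt{\delta}$. Inside $B_1$ and away from the vertex, the piece of ${\conenotation}_a(\delta)$ over a cap is then contained in a $\delta$-thickened planar sector $P_\theta$. We handle the region near each vertex, say within distance $\delta$, separately; its volume is $O(\delta^n)\le\delta^2/\sigma$ since $n\ge2$.

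Now fix the translated cone ${\conenotation}_a+\tau$ and the cone ${\conenotation}_{\bar a}$. For a plate $P_\theta$ of the first and a plate $P'_{\theta'}$ of the second, the intersection of two $\delta$-slabs whose normals make angle $\alpha$ has cross-section $\lesssim\delta^2/\max(\alpha,\delta)$ in the two normal directions. The key geometric input is a transversality estimate: for all $\omega,\omega'$,
\[
|N_a(\omega)-N_{\bar a}(\omega')|\gtrsim |a-\bar a|+|\omega-\omega'|.
\]
This holds because the last coordinates differ in their $a$-dependence, since $(1+a^2)^{-1/2}$ is strictly monotone with derivative bounded below for $a\in(0,1]$. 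Consequently no two plates are tangent to better than angle $\sigma$.

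The sum over pairs of plates is crude, so we organize it by the lines of intersection instead. Parametrize the first cone by $(\omega,r)$. For each fixed $\omega$, the ray of ${\conenotation}_a+\tau$ in direction $\omega$ lies in a $2$-plane containing the axis. Restricted to that $2$-plane, the problem becomes the 2D intersection of two pairs of lines: the slice of a cone is a pair of lines of slopes $\pm a$ and $\pm\bar a$.

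We apply Fubini over the $(n-2)$-dimensional angular variable on the fixed cone ${\conenotation}_{\bar a}$, using coordinates adapted to $\bar a$, i.e. the spherical angle about the axis of ${\conenotation}_{\bar a}$. This gives
\[
\big|({\conenotation}_a(\delta)+\tau)\cap{\conenotation}_{\bar a}(\delta)\cap B_1\big|\lesssim\int_{S^{n-2}}\big|\{r\le2:\ (r\omega,\bar a r)+O(\delta)\in{\conenotation}_a(\delta)+\tau\}\big|\cdot\delta\,d\omega .
\]
The inner set is a sublevel set $\{r:|\phi_\omega(r)|\lesssim\delta\}$, where $\phi_\omega(r)=\bar a r-a|r\omega-\tau'|-\tau_n$ and $\tau=(\tau',\tau_n)$.

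Since $\phi_\omega''\ge0$ (or $\le0$) and the 1D sublevel set of a function with $|\phi'|\gtrsim\sigma$ or $|\phi''|\gtrsim\beta$ has measure $\lesssim\min(\delta/\sigma,\sqrt{\delta/\beta})$, we need lower bounds on $\phi_\omega'$ and $\phi_\omega''$. Here $\phi'_\omega(r)=\bar a-a\cos\angle(\omega,r\omega-\tau')$. Where this angle is small, $|\phi'|\gtrsim\sigma$. Where it is not small, $\phi''\gtrsim a\sin^2/|r\omega-\tau'|$ and $\phi'$ varies, so the set where $|\phi'|\lesssim\sigma$ is itself short.

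The main obstacle is making this quantitative. We must integrate over $\omega$ the measure of $\{r:|\phi_\omega|\le\delta\}$ and obtain $\lesssim\delta/\sigma$ on average, rather than pointwise; pointwise $\sqrt\delta$ occurs for a set of $\omega$ of small measure. The plan is to apply the coarea formula in the joint variables $(\omega,r)$ to the function $\Phi(\omega,r)=\phi_\omega(r)$. Then
\[
|\{|\Phi|\le\delta\}|\lesssim\delta\sup_{|s|\le\delta}\int_{\{\Phi=s\}}|\nabla\Phi|^{-1}.
\]
We use $|\nabla\Phi|\gtrsim\sigma$, which follows from the transversality estimate, away from an exceptional set where both $\partial_r\Phi$ and $\partial_\omega\Phi$ are small. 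That exceptional set is a neighbourhood of the tangency locus, of measure $\lesssim\delta/\sigma$ by a direct computation. This yields the bound $\delta\cdot\delta/\sigma$.
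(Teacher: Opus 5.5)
Your closing strategy has the same skeleton as the paper's proof: pass to the vertical difference function $\Phi(X)=\bar a|X|-a|X-\tau'|-\tau_n$ on $\R^{n-1}$, with one factor $\delta$ coming from the vertical thickness, then apply the coarea formula with $|\nabla\Phi|\gtrsim\sigma$. As written, however, it does not close, for two reasons.

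First, the gradient analysis is misdiagnosed: there is no tangency locus and no exceptional set. For $X\neq 0,\tau'$, $\nabla\Phi(X)=\bar a\frac{X}{|X|}-a\frac{X-\tau'}{|X-\tau'|}$ is a difference of vectors of lengths $\bar a$ and $a$. Hence $|\nabla\Phi|\ge|\bar a-a|=\sigma$ everywhere. This is the paper's identity $|Df|^2=\Delta^2+\frac{a\bar a}{r\bar r}[u^2-(\bar r-r)^2]\ge\Delta^2$. The opposite-sheet function $\bar a|X|+a|X-\tau'|$, which you did not treat, obeys the same bound. The $\sqrt\delta$-long slices you found on some rays come only from zeros of the radial derivative $\phi_\omega'$; at such points the tangential component of $\nabla\Phi$ has length $\sqrt{a^2-\bar a^2}\ge\sigma$. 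So the ``direct computation'' you defer concerns an empty set. Separately, your transversality estimate $|N_a(\omega)-N_{\bar a}(\omega')|\gtrsim|a-\bar a|+|\omega-\omega'|$ is false for small apertures. Take $a=2\bar a\to 0$: both normals lie within $O(\bar a)$ of the vertical unit vector for all $\omega,\omega'$. Also, $(1+a^2)^{-1/2}$ does not have derivative bounded below on $(0,1]$. Only the weaker bound $\gtrsim|a-\bar a|$ holds (from the horizontal components), and that is all you need.

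Second, and this is the genuine gap: the coarea formula only gives
$\sigma\,|\{|\Phi-v|\le\delta\}\cap B|\le\int_{v-\delta}^{v+\delta}\mathcal H^{n-2}(\Phi^{-1}(s)\cap B)\,ds$.
Reaching $\delta/\sigma$ therefore requires $\mathcal H^{n-2}(\Phi^{-1}(s)\cap B)\lesssim 1$ uniformly in $s,\tau,a,\bar a$, and your last line silently assumes this. It does not follow from the gradient bound. In general $D^2\Phi$ has size up to $\bar a|X|^{-1}+a|X-\tau'|^{-1}$, not $O(\sigma)$, so level sets could a priori oscillate. A gradient-plus-Lipschitz sublevel estimate such as Proposition~\ref{prop:gradient_lower_bound}, applied to $\Phi/\sigma$, gives only about $\delta/\sigma^2$ away from the vertices, and worse near them.

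The paper supplies exactly this missing ingredient. After squaring twice, $\{\bar a\bar r\mp ar=s\}$ is an algebraic curve of degree at most $4$ (a Cartesian oval), so by Crofton's formula its length in a ball of radius $10$ is $O(1)$. For general $n$, either reduce to $n=3$ as the paper does, or note that the level set is the hypersurface of revolution generated by such an oval. Combined with $|Df|\ge\Delta$ and the coarea inequality of Proposition~\ref{prop:ding}, this gives the bound. Once you add this length bound and use the exact gradient estimate, your plate decomposition and the ray-by-ray analysis become unnecessary.
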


\begin{rmk}

Without the restriction to $B_1$, 
\eqref{eqn:intersection_bound_cone} fails in general, as the case $n=3$ shows: 
\begin{equation*}
    \left|\big({\conenotation}_1(\delta)+(0,0,v)\big)\cap {\conenotation}_{1/2}(\delta)\right|\sim v\delta^2\gg \delta,\quad v\rightarrow\infty.
\end{equation*} 
\end{rmk}

%%%%%%%%%%%%%%%%%%%%

\subsection{Wolff's maximal function}\label{sec:sharpness_operators}
As a consequence of the intersection bounds in \S\ref{sec:surface_of_revolution_away_from_0}--\ref{sec:cones}, we obtain sharp $L^p\rightarrow L^q$ estimates for Wolff's maximal function associated with surfaces of revolution. 

\begin{defn}[Wolff's maximal function]
\label{def:wolff-maximal}
Let $n\ge 2$. Let $A\sub\R$ be a compact interval, and let $\{\Gamma_a\}_{a\in A}$ be a family of (compact pieces of) hypersurfaces in $\R^n$. For $f\in L^1(\R^n)$ and $\delta>0$, define 
    \begin{equation}\label{eqn:maximal_function}
        M_\delta f(a):=\sup_{\tau \in \R^n}
        \frac 1{|\Gamma_a(\delta)|}\int_{\Gamma_a(\delta)}|f(\tau+x)|dx,\quad a\in A. 
    \end{equation}    
\end{defn}

\begin{rmk}
\label{rmk:basic_geometric_objects}
Model examples of $\{\Gamma_a\}_{a\in A}$ (e.g. take $A=[1,2]$) include 
\begin{equation*}
    \begin{aligned}
        &S_a:=\{(\vectornotation{x},{\verticalvariable})\in\mathbb R^{n-1}\times\R: |\vectornotation{x}|^2+{\verticalvariable}^2=a^2\}\;\;(\text{spheres}),  \\
        &\mathcal P_a^0:=\{(\vectornotation{x},{\verticalvariable})\in\mathbb R^{n-1}\times\R: {\verticalvariable}=a|\vectornotation{x}|^2,\,|\vectornotation{x}|\le 1\}\;\;(\text{paraboloids}),\\
        &\mathcal C_a^0:=\{(\vectornotation{x},{\verticalvariable})\in\mathbb R^{n-1}\times\R: {\verticalvariable}=a|\vectornotation{x}|,\;\;|\vectornotation{x}|\le 1\}\;\; (\text{cones}).
    \end{aligned}
\end{equation*}
\end{rmk}

The following lemma is implicit in \cite[Section 2]{Kolasa_Wolff}, where the case $\Gamma_a=S_a$ is treated explicitly. As noted there, the proof 
follows easily from the method of C\'ordoba \cite{Cordoba1977}; so we omit it. 

\begin{lem}
\label{lem:cordoba}
Let $n\ge 2$ and let $\{\Gamma_a\}_{a\in A}$ be as in Definition \ref{def:wolff-maximal}. Suppose there exists a constant $C>0$ such that for all $a, \bar a\in A$, 
\begin{equation}
\label{eqn:KW-transverality}
        \sup_{\tau\in \R^n}\big|(\Gamma_a(\delta)+\tau)\cap \Gamma_{\bar a}(\delta)\big|
        \le C\frac{\delta^2}{\delta+|\bar a-a|},\quad \forall  \delta\in (0,1).
    \end{equation}
Then 
\begin{equation}
\label{eq:cordoba-bound}
\|M_\delta f\|_{L^q(A)}\lesssim C(\delta)\|f\|_{L^p(\R^n)},
\end{equation}
where 
\begin{equation}\label{eqn:C-delta}
C(\delta)=\begin{cases}
             |\log \delta|^{\frac 1 p},\quad &\text{if }p\ge 2 \text{ and }p\ge q,\\
             %(\delta^{-1})
             \delta^{-(\frac 2 p-1)}, \quad &\text{if }p<2\text{ and }p\le q',\\
             %(\delta^{-1})
             \delta^{-(\frac 1 p-\frac 1 q)}, \quad &\text{if }p>q'\text{ and }q>p.  
        \end{cases}
\end{equation}
The implicit constant in \eqref{eq:cordoba-bound} depends only on $C$, $|A|$, $p$, $q$ and $n$. 
\end{lem}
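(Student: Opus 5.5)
We will prove Lemma \ref{lem:cordoba} by duality and C\'ordoba's $L^2$ argument, then interpolate with the trivial endpoint estimates. Write $\Gamma_a(\delta)$ with $|\Gamma_a(\delta)|\sim\delta$ (implicit in the hypothesis, since taking $\bar a=a$ and $\tau=0$ in \eqref{eqn:KW-transverality} gives $|\Gamma_a(\delta)|\lesssim\delta$; we also use the matching lower bound $|\Gamma_a(\delta)|\gtrsim\delta$, valid for the compact hypersurface pieces under consideration). Two trivial bounds come first: $\|M_\delta f\|_{L^\infty(A)}\le\|f\|_\infty$, and $\|M_\delta f\|_{L^\infty(A)}\lesssim\delta^{-1}\|f\|_{L^1}$. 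Since $A$ is bounded, $L^\infty(A)$ controls every $L^q(A)$ norm.

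The key estimate is the restricted weak-type/$L^2$ bound $\|M_\delta f\|_{L^2(A)}\lesssim|\log\delta|^{1/2}\|f\|_{L^2}$. Discretize $A$ into a $\delta$-separated set $\{a_j\}$; since $\Gamma_{a}(\delta)$ is contained in a slightly enlarged neighbourhood of a translate of $\Gamma_{a_j}$ for $|a-a_j|\le\delta$ (using the Lipschitz dependence of $\Gamma_a$ on $a$), it suffices to bound $\delta\sum_j M_\delta f(a_j)^2$. Choose near-maximizing translations $\tau_j$ and weights $y_j\ge 0$ with $\delta\sum_j y_j^2=1$, so that by duality
\begin{equation*}
\delta\sum_j y_j M_\delta f(a_j)\lesssim \int |f|\,\sum_j y_j \mathbf 1_{\Gamma_{a_j}(\delta)+\tau_j}\le\|f\|_2\Big\|\sum_j y_j\mathbf 1_{\Gamma_{a_j}(\delta)+\tau_j}\Big\|_2.
\end{equation*}
Expanding the square and applying \eqref{eqn:KW-transverality}, $\|\sum_j y_j\mathbf 1_{T_j}\|_2^2\le C\sum_{j,k}y_jy_k\delta^2/(\delta+|a_j-a_k|)$. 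The kernel $\delta/(\delta+\delta|j-k|)=1/(1+|j-k|)$ has $\ell^1$ row sums $\lesssim|\log\delta|$ (Schur's test, as $\#\{j\}\lesssim\delta^{-1}$), giving $\lesssim C\delta|\log\delta|\sum y_j^2=C|\log\delta|$. Multiplying by the $\delta^{-1}$ normalization $1/|\Gamma_a(\delta)|$ yields the claimed $L^2\to L^2$ bound $|\log\delta|^{1/2}$.

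Finally interpolate (Riesz--Thorin, as $M_\delta$ is sublinear; or linearize via measurable choice of $\tau$). Interpolating $L^2\to L^2$ ($|\log\delta|^{1/2}$) with $L^\infty\to L^\infty$ (1) gives $L^p\to L^p$ with $|\log\delta|^{1/p}$ for $p\ge2$, and then $L^p\to L^q$ for $q\le p$ by H\"older on $A$: the first case. Interpolating $L^2\to L^2$ with $L^1\to L^\infty$ ($\delta^{-1}$) gives $L^p\to L^{p'}$ with $\delta^{-(2/p-1)}$ (the log factor absorbed into the power on the open range, and at $p=2$ case one applies), and H\"older gives $q\le p'$: the second case. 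Interpolating $L^1\to L^\infty$ with $L^\infty\to L^\infty$ gives $L^p\to L^\infty$ with $\delta^{-1/p}$; interpolating this with the $L^{p_0}\to L^{p_0'}$ line covers the region $q>p$, $p>q'$ with constant $\delta^{-(1/p-1/q)}$, which one checks by computing exponents along the segment. The main obstacle is bookkeeping at the boundaries between regions, where logarithmic losses from the $L^2$ estimate could leak in; I would handle this by interpolating only between the $\delta$-power endpoints away from $p=2$ and verifying the exponent $1/p-1/q$ matches linearly.
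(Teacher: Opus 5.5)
Your C\'ordoba $L^2$ bound and case 1 are fine, but cases 2 and 3 are not proved. Riesz--Thorin between $L^2\to L^2$ (constant $|\log\delta|^{1/2}$) and $L^1\to L^\infty$ (constant $\delta^{-1}$) gives, on the line $q=p'$, the constant $|\log\delta|^{1-\frac1p}\,\delta^{-(\frac2p-1)}$. A power of $|\log\delta|$ cannot be absorbed into $\delta^{-(\frac2p-1)}$ with a constant independent of $\delta$. Absorbing it only yields $\delta^{-(\frac2p-1)-\eps}$, whereas the lemma (and the two-sided Theorem \ref{thm:maximal_general_pq}) asserts exactly $\delta^{-(\frac2p-1)}$.

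Case 3 has the same defect. The only log-free estimates you actually have lie on the edge $\frac1q=0$, so interpolating among $(\frac1p,\frac1q)=(0,0),(1,0),(\frac12,\frac12)$ gives at best $\delta^{-(\frac1p-\frac1q)}|\log\delta|^{1/q}$. Your proposed remedy, interpolating only between $\delta$-power endpoints away from $p=2$, presupposes a log-free $L^{p_0}\to L^{p_0'}$ bound for some $p_0\in(1,2)$. Nothing in your argument supplies one. Nor can an endpoint refinement at $p=2$ help, because the logarithm at $(\frac12,\frac12)$ is genuine: for the families of Theorem \ref{thm:basic_geometric}, the compressed sets show that even weak type $(2,2)$ costs $|\log\delta|^{1/2}$.

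The missing ingredient is a direct, log-free off-diagonal estimate for $1<p<2$.
\begin{itemize}
\item Set-up: fix a measurable choice of translates $T_a=\Gamma_a(\delta)+\tau(a)$. For $J\subseteq A$, let $F_J(x)=\delta^{-1}|J_x|$ with $J_x=\{a\in J: x\in T_a\}$; up to constants this is the linearized adjoint applied to $\mathbf 1_J$.
\item Far pairs dominate: if $F_J(x)\sim\lambda$, the pairs $(a,b)\in J_x^2$ with $|a-b|<|J_x|/4$ have measure at most $|J_x|^2/2$. So at least half of $J_x^2$ consists of pairs with $|a-b|\gtrsim\lambda\delta$.
\item Level-set bound: integrating in $x$ and applying \eqref{eqn:KW-transverality} only to these far pairs gives $|\{F_J\sim\lambda\}|\lesssim\lambda^{-2}|J|\log(2+|J|/(\lambda\delta))$.
\item Summation: summing $\lambda^{p'}$ over dyadic $\lambda\lesssim|J|/\delta$ gives a convergent geometric series dominated by its top term. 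Hence $\|F_J\|_{L^{p'}}\lesssim_p\delta^{-(\frac2p-1)}|J|^{1/p}$, with no logarithm.
\item Conclusion: this restricted strong-type bound at two exponents in $(1,2)$, the off-diagonal Marcinkiewicz theorem (valid since $p<p'$), and duality give $\|M_\delta\|_{L^p\to L^{p'}}\lesssim_p\delta^{-(\frac2p-1)}$. H\"older on $A$ then gives case 2, and interpolating with your log-free bound $L^p\to L^\infty$ (constant $\delta^{-1/p}$) gives case 3.
\end{itemize}

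A smaller point: your discretization uses Lipschitz dependence of $\Gamma_a$ on $a$, which is not a hypothesis of the lemma. You can avoid it by running Schur's test directly on the kernel $\delta^{-2}|T_a\cap T_b|\lesssim(\delta+|a-b|)^{-1}$ over $A\times A$.
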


As a corollary of Lemma \ref{lem:cordoba}, we obtain the following. 

\begin{thm}\label{thm:maximal_general_pq}
Let $\{\Gamma_a\}_{a\in A}$ be as in Theorems \ref{thm:anti-compression_high_dimension}, \ref{thm:anti-compression_3D}(a), \ref{thm:full_surface_anti_compression}, or let $\Gamma_a=\Gamma_{a,\eps}$ as in \eqref{eqn:translation_bound_near_zero} of Theorem \ref{thm:anti-compression_near_0}. 
Then 
\begin{equation*}
    \|M_\delta\|_{L^p(\R^n)\to L^{q}(A)}\sim C(\delta),
\end{equation*}
where $C(\delta)$ is given by \eqref{eqn:C-delta}. The implicit constants may additionally depend on $|A|$, $p$, and $q$.
\end{thm}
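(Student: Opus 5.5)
The upper bound $\|M_\delta\|_{L^p\to L^q}\lesssim C(\delta)$ comes directly from Lemma \ref{lem:cordoba}. Its hypothesis \eqref{eqn:KW-transverality} is exactly the conclusion of Theorem \ref{thm:anti-compression_high_dimension} (for $n\ge4$), Theorem \ref{thm:anti-compression_3D}(a) (for $n=3$, elliptic), Theorem \ref{thm:full_surface_anti_compression}, or \eqref{eqn:translation_bound_near_zero} of Theorem \ref{thm:anti-compression_near_0}. In the last case I will apply the argument of Lemma \ref{lem:cordoba} to the pair $\Gamma_{a,b}$, $\Gamma_{\bar a,\eps}$. The asymmetry is harmless, since C\'ordoba's $L^2$ argument only needs the bound $|(\Gamma_{a,\eps}(\delta)+\tau)\cap\Gamma_{\bar a,\eps}(\delta)|\le|(\Gamma_{a,b}(\delta)+\tau)\cap\Gamma_{\bar a,\eps}(\delta)|$. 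So all the work is in the matching lower bounds, and I will prove these by testing three examples. Throughout, $|\Gamma_a(\delta)|\sim\delta$ uniformly in $a$.

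\emph{Case $p<2$, $p\le q'$: target $\delta^{1-2/p}$.} Take $f=\chi_{B}$, where $B$ is a ball of radius $\delta$. For every $a$ we can translate so that $\Gamma_a(\delta)$ meets $B$ in a set of measure $\sim\delta^n$, so $M_\delta f(a)\gtrsim\delta^{n-1}$ on all of $A$. Then $\|M_\delta f\|_q/\|f\|_p\gtrsim\delta^{n-1-n/p}$. This gives only the exponent $n-1-n/p$ rather than $1-2/p$, so a better test is needed. I will use $f=\chi_{E}$, where $E$ is a $\delta$-neighbourhood of a codimension-one patch of size $\delta^{1/2}$ in the direction tangent to $\Gamma_a$ and of size $1$ in the remaining $n-2$ directions. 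The cleaner choice is $f=\chi_{\Gamma_{a_0}(\delta)\cap T}$ for a suitable slab. I expect the right example to be the standard one for the Kolasa--Wolff maximal operator: $E=\Gamma_{a_0}(\delta)\cap\{\text{a }\delta\text{-cap}\}$, a $\delta$-ball-sized piece. For $|a-a_0|\lesssim\delta$ one has $M_\delta f(a)\sim1$, while for general $a$ the contribution scales correctly. The point is to calibrate the example so that $\|f\|_p=\delta^{1/p}\cdot(\text{size})$ and $M_\delta f\gtrsim$ const on a set of $a$ of measure $\delta$. Concretely, taking $f=\chi_{\Gamma_{a_0}(\delta)}$ gives $\|f\|_p\sim\delta^{1/p}$ and $M_\delta f\sim1$ on $|a-a_0|\le\delta$, hence the ratio $\delta^{1/q-1/p}$. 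This handles the third case, $q>p>q'$.

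\emph{Case $p<2$: the $\delta^{1-2/p}$ example.} Take $f=\chi_{B_\delta}$, a ball of radius $\delta$ placed at a point where every translate can be tangent. Then $M_\delta f(a)\gtrsim\delta^{n}/\delta=\delta^{n-1}$ for all $a$, which is too small. Instead I will use $f=\chi_{N}$, where $N$ is the $\delta$-neighbourhood of an $(n-2)$-dimensional ``equator'' slice of $\Gamma_{a_0}$ (for instance the sphere $|X|=t_0$, $y=g(a_0,t_0)$) of thickness $\delta$ in the radial direction and $\delta^{1/2}$-wide along the generating curve. Then $|N|\sim\delta^{3/2}$. Each $\Gamma_a$, after a vertical translation, is tangent along this slice, so $M_\delta f(a)\gtrsim\delta^{3/2}/\delta=\delta^{1/2}$ for all $a$. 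This gives ratio $\delta^{1/2-3/(2p)}$, which is still not $\delta^{1-2/p}$. The correct choice is width $\delta$ along the generating curve, i.e. $N$ a $\delta$-neighbourhood of the $(n-2)$-sphere. That gives $|N|\sim\delta^2$ and $M_\delta f(a)\gtrsim\delta$ for all $a\in A$, so the ratio is $\delta^{1-2/p}$, as desired. This works because the generating curves, vertically translated, pass through a common point of the slice.

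\emph{Case $p\ge2$, $p\ge q$: target $|\log\delta|^{1/p}$.} Use the compression construction from \S\ref{sec:compress} (the Perron-tree packing alluded to after Theorem \ref{thm:sphere_CYZ}), taken as given: a union $K$ of $\delta$-neighbourhoods of translates of all $\Gamma_a$ with $|K(\delta)|\lesssim|\log\delta|^{-1}$. With $f=\chi_{K(\delta)}$ we get $M_\delta f\gtrsim1$ on $A$ and $\|f\|_p\lesssim|\log\delta|^{-1/p}$. Since $q\le p$ and $A$ is bounded, this gives the bound. The main obstacle is exactly this step: confirming that the packing construction applies to each family listed in the statement (including the near-axis pieces $\Gamma_{a,\eps}$ and the cone/paraboloid cases), uniformly in $\delta$, with only vertical translations.
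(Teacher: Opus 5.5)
Your final argument is correct and follows the paper's route: the upper bound is Lemma \ref{lem:cordoba} fed by the intersection theorems, with your inclusion $\Gamma_{a,\eps}(\delta)\subseteq\Gamma_{a,b}(\delta)$ handling the near-axis family, and the lower bounds come from the Kolasa--Wolff test functions ($\chi_{\Gamma_{a_0}(\delta)}$, and the $\delta$-neighbourhood of an $(n-2)$-sphere shared by all vertically translated $\Gamma_a$) together with the compression construction of \S\ref{sec:compress}; you should delete the abandoned $\delta^{1/2}$-wide slice, since vertical translation cannot make generating curves tangent when $g_t(a,t_0)\neq g_t(a_0,t_0)$, and that example would contradict the upper bound $\delta^{1-2/p}$ for $1<p<2$, $q=1$. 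The step you flag as the main obstacle closes quickly: $\|g_t\|_{\Lip}<\infty$ gives the mixed-difference hypothesis of Proposition \ref{prop:rotational} (with $r_0=0$ allowed for the near-axis and full surfaces), and each vertical fibre of the resulting union is a union of at most $2^M$ intervals while the graphs are uniformly Lipschitz, so choosing $2^M\approx\delta^{-1/2}$ gives a $\delta$-neighbourhood of measure $\lesssim M^{-1}+2^M\delta\lesssim|\log\delta|^{-1}$.
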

The proof of the upper bound follows readily from 
Theorems \ref{thm:anti-compression_high_dimension}, \ref{thm:anti-compression_3D}(a), \ref{thm:anti-compression_near_0}, \ref{thm:full_surface_anti_compression} and Lemma \ref{lem:cordoba}. 
The proof of the lower bound follows directly from the arguments of \cite[Propositions 2.1 and 2.2]{Kolasa_Wolff} and Theorem \ref{thm:basic_geometric} below.

\smallskip
As an immediate corollary of the upper bound in Theorem \ref{thm:maximal_general_pq}, we have the following. 

\begin{cor}
\label{cor:1.4}
Let $\{\Gamma_a\}_{a\in A}$ be as in Theorems \ref{thm:anti-compression_high_dimension}, \ref{thm:anti-compression_3D}(a), or let $\Gamma_a=\Gamma_{a,\eps}$ as in \eqref{eqn:translation_bound_near_zero} of Theorem \ref{thm:anti-compression_near_0}. 
Let $E\sub \R^n$ be a Lebesgue measurable 
set that contains for each $a\in A$ a translated copy of $\Gamma_a$. 
Then for every $\delta\in (0,1/2)$, 
$$|E(\delta)|\gtrsim |\log \delta|^{-1}.$$
In particular, $E$ has Minkowski dimension $n$. 
\end{cor}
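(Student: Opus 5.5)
The plan is to derive Corollary~\ref{cor:1.4} from the upper bound in Theorem~\ref{thm:maximal_general_pq} by testing the maximal operator against the indicator of $E(\delta)$. I will use the restricted estimate $p=q=2$, for which $C(\delta)=|\log\delta|^{1/2}$ by \eqref{eqn:C-delta}.

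\textbf{Step 1: the maximal function is large on all of $A$.} Set $f=\mathbf 1_{E(\delta)}$, and assume $|E(\delta)|<\infty$ (otherwise there is nothing to prove). Fix $a\in A$. By hypothesis there is $\tau_a$ with $\Gamma_a+\tau_a\sub E$. Hence $\Gamma_a(\delta)+\tau_a\sub E(\delta)$. Plugging $\tau=\tau_a$ into \eqref{eqn:maximal_function} gives $M_\delta f(a)\ge 1$ for every $a\in A$. In particular $\|M_\delta f\|_{L^2(A)}\ge |A|^{1/2}$.

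\textbf{Step 2: apply the maximal bound.} By Theorem~\ref{thm:maximal_general_pq}, or directly by Lemma~\ref{lem:cordoba} combined with the intersection bounds of Theorems~\ref{thm:anti-compression_high_dimension}, \ref{thm:anti-compression_3D}(a) or \ref{thm:anti-compression_near_0}, we have
\[
|A|^{1/2}\le \|M_\delta f\|_{L^2(A)}\lesssim |\log\delta|^{1/2}\,\|f\|_{L^2(\R^n)}=|\log\delta|^{1/2}\,|E(\delta)|^{1/2}.
\]
Squaring gives $|E(\delta)|\gtrsim |A|\,|\log\delta|^{-1}$. The dependence on $|A|$ is absorbed into the implicit constant, as the statement allows.

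\textbf{Step 3: measurability.} One must check that $M_\delta f$ is measurable in $a$, or else read the $L^2$ norm as an outer norm. The Córdoba argument behind Lemma~\ref{lem:cordoba} only uses pointwise lower bounds on a set of $a$'s, so I expect this to be harmless. Alternatively, I would discretize: choose a $\delta$-separated set of $a_j\in A$, use disjointness-type counting via \eqref{eqn:KW-transverality}, and run Córdoba's $L^2$ argument on $\sum_j \mathbf 1_{\Gamma_{a_j}(\delta)+\tau_{a_j}}$. This is the only step needing any care.

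\textbf{Step 4: Minkowski dimension.} The bound $|E(\delta)|\gtrsim|\log\delta|^{-1}$ exceeds $c_\eps\delta^{\eps}$ for every $\eps>0$. As recalled in \S\ref{sec:literature}, this means $E$ has full Minkowski dimension $n$. Here $E$ is taken bounded, or we restrict to a bounded portion containing the translated copies if necessary.
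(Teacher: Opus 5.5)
Your proposal is correct and follows the paper's route: the paper states the corollary as an immediate consequence of the upper bound in Theorem~\ref{thm:maximal_general_pq}, tested on $\mathbf 1_{E(\delta)}$ exactly as in your Steps 1--2 (e.g.\ with $p=q=2$). Your Step 3 can be dropped, because an average of an indicator never exceeds $1$, so Step 1 in fact gives $M_\delta \mathbf 1_{E(\delta)}\equiv 1$ on $A$, which is trivially measurable.
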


The lower bound in Corollary \ref{cor:1.4} can be attained for rather general surfaces of revolution $\Gamma_a$ using only vertical translation. For simplicity, we demonstrate this in the following theorem, which is a direct consequence of the more general Proposition \ref{prop:compress} (see Remark \ref{rmk:cone-paraboloid}) and Theorem \ref{thm:sphere}. In particular, the following theorem holds.

\begin{thm}\label{thm:basic_geometric}
Let $\{\Gamma_a\}_{a\in [1,2]}$ be one of the three families $S_a, \mathcal P_a^0$ and $\mathcal C_a^0$ in Remark \ref{rmk:basic_geometric_objects}. Then for every $\delta\in (0,1/2)$, there exists a compact set $E\sub\mathbb R^n$ that contains a translated copy of $\Gamma_a$ for each $a\in [1,2]$ and satisfies 
\begin{equation*}
    |E(\delta)|\lesssim |\log \delta|^{-1}.
\end{equation*}
\end{thm}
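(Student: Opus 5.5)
Theorem \ref{thm:basic_geometric} follows by feeding the three model families into the general compression result Proposition \ref{prop:compress}, which we may assume. That proposition says the following: let $g(a,t)$, $a\in A$, $t\in I$, be a one-parameter family of generating curves that is regular in $t$ and Lipschitz in $a$. Then one can choose vertical translations $v(a)$ such that
\[
\bigcup_{a}\big(\Gamma_a+(0,\dots,0,v(a))\big)
\]
has $\delta$-neighbourhood of measure $O(|\log\delta|^{-1})$. So the work is to identify the generating function for each family and check the hypotheses.

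\textbf{Paraboloids and cones.} For $\mathcal P_a^0$ the generating function is $g(a,t)=at^2$, and for $\mathcal C_a^0$ it is $g(a,t)=at$, with $t\in[0,1]$ and $a\in[1,2]$. In both cases $g$ is linear in $a$, and $g_t$, $g_{tt}$ are Lipschitz, so the regularity hypothesis holds. The cone's singularity is harmless here, because only vertical translations are used. For the cone, the rotation of the graph over $|X|\le1$ is exactly $\mathcal C_a^0$, so Remark \ref{rmk:cone-paraboloid} applies directly. The only step is to confirm that the parameter ranges match the proposition's normalisation; if they do not, rescale $t$ by a fixed factor, which changes the volume only by a dimensional constant.

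\textbf{Spheres.} Here $g(a,t)=\sqrt{a^2-t^2}$ is singular at $t=a$, since $g_t$ blows up at the equator. I would cite Theorem \ref{thm:sphere} directly. If one instead wants to rederive it, I would:
\begin{enumerate}[label=(\roman*)]
\item split $S_a$ into the upper cap, the lower cap, and a band near the equator;
\item treat each cap with Proposition \ref{prop:compress}, using $g=\pm\sqrt{a^2-t^2}$ with $t\in[0,c]$ and a fixed $c<1$, so that $g$ is smooth and regular uniformly in $a$;
\item reparametrise the equatorial band as a graph over the $y$-variable, or equivalently as the surface of revolution $|X|=h(a,y)=\sqrt{a^2-y^2}$.
\end{enumerate}

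\textbf{Main obstacle.} The hardest point is compatibility of translations. All pieces of a single sphere must be moved by the same vector, whereas the proposition chooses $v(a)$ separately for each piece. With vertical-only translations this is consistent for the two caps only if the same function $v(a)$ serves both. I would handle this by applying the Perron-tree construction of Proposition \ref{prop:compress} once, with a common $v(a)$, to the family obtained by taking both caps together. The construction only needs the local geometry of how nearby generating curves cross, and $\pm\sqrt{a^2-t^2}$ have symmetric structure. The equatorial band has $\delta$-neighbourhood whose union over $a\in[1,2]$ needs care: a vertical shift moves it along itself, so I would instead show that the band contributes at most $O(|\log\delta|^{-1})$ by shrinking its angular width to depend on $|\log\delta|^{-1}$. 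This interplay is exactly what Theorem \ref{thm:sphere} handles, so I would defer to it.

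Finally, take $E$ to be the (compact) union of the translated surfaces. Compactness follows because $v(a)$ is bounded and the construction can be made with closed pieces.
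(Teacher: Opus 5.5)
Your overall route matches the paper's: cones and paraboloids via Proposition \ref{prop:rotational} (Remark \ref{rmk:cone-paraboloid}), and spheres via Theorem \ref{thm:sphere}. The gap is your opening claim that these results produce a set whose $\delta$-neighbourhood has measure $O(|\log\delta|^{-1})$. They do not. For a $2^{-M}$-simple shift $v$, they bound only $|U|$ by $C/M$, where $U=\bigcup_a(\Gamma_a+v(a))$. A set of small (even zero) measure can have a large $\delta$-neighbourhood. What is missing is tying $M$ to $\delta$ and using that $v$ takes only $2^M$ values.

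\emph{Graphs.} For $y=a|X|$ and $y=a|X|^2$ the profiles are $O(1)$-Lipschitz, so every $(X,y)\in U(\delta)$ with $|X|<1$ has $\mathrm{dist}(y,U_X)\lesssim\delta$. The slice $U_X$ is a union of at most $2^M$ intervals, one per constancy interval of $v$. Thickening therefore costs $O(2^M\delta)$ per slice, plus $O(\delta)$ for the collar $1\le|X|<1+\delta$, giving $|U(\delta)|\lesssim M^{-1}+2^M\delta$.

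\emph{Spheres.} Here $S_\alpha(\delta)=\bigcup_{|\beta-\alpha|<\delta}S_\beta$, and $v$ is constant on each of its $2^M$ intervals. Hence $U(\delta)\setminus U$ is covered by $2^{M+1}$ shells of thickness at most $\delta$, and again $|U(\delta)|\le |U|+O(2^M\delta)$.

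Taking $M=\lfloor\frac12\log_2(1/\delta)\rfloor$ for small $\delta$ (the bound is trivial otherwise) gives $|U(\delta)|\lesssim|\log\delta|^{-1}$. Then $E=\overline U$ is compact and has the same open $\delta$-neighbourhood. It contains a translate of every $\Gamma_a$, $a\in[1,2]$; for $a=2$ this holds because $v$ is constant on the last interval.

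Two smaller points. First, the hypothesis of Propositions \ref{prop:compress} and \ref{prop:rotational} is the mixed-difference bound \eqref{eq:compress-1} (with a constant $L$), not regularity in $t$. For $g(a,t)=ag_0(t)$ it holds with $L=\|g_0'\|_\infty$, which is what your ``linear in $a$'' actually uses.

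Second, your fallback sketch for spheres would fail. The Perron choice \eqref{eq:def-v} is linear in $f$, so for the lower caps $-\sqrt{a^2-t^2}$ it produces $-v(a)$. Proposition \ref{prop:compress} therefore gives no single vertical shift compressing both caps. Also, widening the caps toward the equator blows up the mixed-difference constant of $\sqrt{a^2-t^2}$, which is of size $(a^2-t^2)^{-3/2}$.

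Theorem \ref{thm:sphere} is not a vertical-shift construction. It covers $S_\alpha$ by caps $T(\vectornotation{\omega}_k)$ in $K=K(n)$ directions and shifts along $\vectornotation{\omega}_k$ at the $k$-th level of a nested $2^{-M}$-adic partition of $[1,2)$. The smaller later shifts are absorbed by the uniformity of Lemma \ref{lem:compress} in the perturbation $(\vectornotation{u},w)$. Since you ultimately cite Theorem \ref{thm:sphere}, this does not break your proof, but the mechanism you describe is not the one that works.
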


%%%%%%%%%%%%%%%%%%%%
\smallskip\smallskip
\subsection{Notation}\label{sec:notation}
\begin{itemize}
 \item We use the notation $a\lesssim_p b$ or $a=O_p(b)$ to indicate that there is a (large) constant $C=C(p)\ge 1$ such that $|a|\leq Cb$ holds; By $a \sim b$ we mean both $a\lesssim b$ and $b\lesssim a$ hold. We use the notation $a\ll_p b$ to indicate that there is a small enough constant $c=c(p)>0$ such that $|a|\leq cb$ holds. When the parameter $p$ is clear from the context, we simply drop the subscript $p$. In particular, we drop the dimension $n$ of $\R^n$ in all such subscripts.

 \item For a function $a(b)$ defined for $b$ in a punctured neighbourhood of $0$, we use the notation $a=o(b)$ to mean that $\lim_{b\to 0}\frac{a(b)}{b}=0$.

 \item For $E\sub \R^n$, we use $E(\delta)$ to denote the (closed) $\delta$-neighbourhood of $E$, namely, $E(\delta)=\{x+y:\, x\in E,\, y\in\R^n,\, |y|< \delta\}.$

\item For $A\sub \R^n$, we denote by $|A|$ its $n$-dimensional Lebesgue measure. %The dimension $d$ will be clear from the context.

%\item A rectangle $R\sub \R^n$ is a subset of the form $\prod_{i=1}^n I_i$ where $I_i$ are compact intervals. A cube is a rectangle having equal dimensions. 

%\item For a rectangle or a ball $R\sub \R^n$ and a constant $C>0$, we denote by $CR$ the concentric dilation of $R$ by a factor of $C$. (In particular, when $n=1$, this applies to intervals.)

\item In $\R^n$, we denote $B^n(0,r)=\left\{x \in \mathbb{R}^n:|x|<r\right\}$, and $\mathbb S^{n-1}=\left\{x \in \mathbb{R}^n:|x|=1\right\}$. 

%\item For an interval $I=[a,b]\sub [0,\infty)$, we denote by $I'$ the interval with half the length of $I$ and with the same right endpoint as $I$; namely, $I'=[\frac{a+b}2,b]$.

%\item For a function $g(a,t)$, the notation $g_{ta}$ means the mixed second order partial derivative $(g_t)_a$.

%\item A collection of subsets of $\R^n$ is said to have bounded overlap if each subset in that collection intersects at most $C_n$ many other different subsets in the collection, where $C_n$ depends only on the dimension $n$.

\end{itemize}

%%%%%%%%%%%%%%%%%%%%

\subsection{Acknowledgements}
X. C. was supported in part by the NNSF of China (Grant Nos. 12371105, 12426204). 
T. Y. was supported in part by the National Key R\&D Program of China (No. 2024YFA1015400).

\subsection{AI disclosure} No AI system was used in the conception, proof, or writing of the mathematical content of this paper. ChatGPT 6-Sol was used to check the typographical and minor mathematical errors in this article.

%%%%%%%%%%%%%%%%%%%%%%%%%%%%%%%%%%%%%%%%

\section{Anti-compression away from the rotation axis}\label{sec:sublevel_measure}
In this section, we prove Theorems \ref{thm:anti-compression_high_dimension} and \ref{thm:anti-compression_3D}.

%%%%%%%%%%%%%%%%%%%%

\subsection{Preliminary reductions}

Let $g$ be a nondegenerate function away from the origin. For $n\ge 3$, define the annulus 
\begin{equation}\label{eqn:defn_Omega}
    \Omega_0:=\{X\in \R^{n-1}:|X|\in I'\}.
\end{equation}
For each $X\in \Omega$, we can write $X=(x,y)$ with $x\in \R^{n-2}$, and write $\Gamma_a$ as the graph of the function $g(a,\sqrt{x^2+|y|^2})$. By radial symmetry, it suffices to consider the following function $f(x,y)$ defined by
\begin{equation}\label{eqn:defn_f}
\begin{aligned}
    f(x,y)&=g\left(\bar a,\sqrt{(x-u)^2+|y|^2}\right)-g\left(a,\sqrt{x^2+|y|^2}\right)\\
    &:=g(\bar a,\bar r)-g(a,r)\\
    &:=g(a+\Delta,\bar r)-g(a,r),
\end{aligned}
\end{equation}
where, for the simplicity of future reference, we have denoted
\begin{equation}\label{eqn:bar_r}
    \bar r:=\sqrt{(x-u)^2+|y|^2},\quad r:=\sqrt{x^2+|y|^2},\quad \Delta:=\bar a-a.
\end{equation}
By symmetry, we can also assume $\Delta\ge 0$. 

\begin{rmk}\label{rmk:Omega}
     One needs to be slightly more careful about the domain of the function $f$. Given $u$, it suffices to consider the intersection of the domains $\Omega_0$ and $\Omega_0+(u,0)$, which may not be convex. However, since $g(a,\cdot)$ is defined on $I\supsetneqq I'$, we can cover $\Omega_0\cap (\Omega_0+(u,0))$ by the union of $O_{n,|I|}(1)$ convex subsets $\Omega$ so that for each $(x,y)\in \Omega$, we can ensure that $r,\bar r\in I$. It then suffices to assume from now on that $f$ is defined on $\Omega$.
\end{rmk}
We record some elementary but important geometric relations. It is important to note that they hold even when $r,\bar r$ are near $0$.
\begin{prop}
    The following relations hold true:
    \begin{align}
        &|\bar r-r|\le |u|\le r+\bar r,\label{eqn:triangle}\\
        &r\bar r-r^2+ux=\frac 1 2\left[u^2-(\bar r- r)^2\right],\label{eqn:Apollonius}\\
        &4u^2|y|^2=\left[(\bar r+r)^2-u^2\right]\left[u^2-(\bar r-r)^2\right].\label{eqn:y^2}
    \end{align}
\end{prop}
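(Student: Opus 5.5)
These are elementary identities, and I will verify each one by direct algebra from the definitions \eqref{eqn:bar_r}, namely $r^2=x^2+|y|^2$ and $\bar r^2=(x-u)^2+|y|^2$, with $x\in\R$ the first coordinate here. The starting observation is that subtracting the two definitions gives
\begin{equation*}
\bar r^2-r^2=u^2-2ux .
\end{equation*}

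For \eqref{eqn:triangle}, I will view $r$ and $\bar r$ as the distances from the point $(x,y)\in\R^{n-1}$ to the origin and to $(u,0)$ respectively. These two points are at distance $|u|$ apart. The triangle inequality in $\R^{n-1}$ then gives $|\bar r-r|\le|u|\le r+\bar r$ at once. This argument does not require $r$ or $\bar r$ to be bounded away from $0$.

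For \eqref{eqn:Apollonius}, I will expand the right-hand side:
\begin{equation*}
\tfrac12\bigl[u^2-(\bar r-r)^2\bigr]=\tfrac12\bigl[u^2-\bar r^2-r^2\bigr]+r\bar r .
\end{equation*}
Substituting $\bar r^2=r^2+u^2-2ux$ turns this into $\tfrac12[-2r^2+2ux]+r\bar r=r\bar r-r^2+ux$, which is the left-hand side.

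For \eqref{eqn:y^2}, the right-hand side factors as a difference of squares:
\begin{equation*}
\bigl[(\bar r+r)^2-u^2\bigr]\bigl[u^2-(\bar r-r)^2\bigr]=\bigl[2(r^2+\bar r^2)\bigr]u^2-u^4-(\bar r^2-r^2)^2 .
\end{equation*}
Here I use $(\bar r+r)^2+(\bar r-r)^2=2(r^2+\bar r^2)$ and $(\bar r+r)^2(\bar r-r)^2=(\bar r^2-r^2)^2$.

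Next I substitute $r^2+\bar r^2=2x^2+2|y|^2+u^2-2ux$ and $\bar r^2-r^2=u^2-2ux$. The right-hand side becomes
\begin{equation*}
4u^2x^2+4u^2|y|^2+2u^4-4u^3x-u^4-(u^2-2ux)^2 .
\end{equation*}
Expanding $(u^2-2ux)^2=u^4-4u^3x+4u^2x^2$, every term cancels except $4u^2|y|^2$, as required. This identity can also be read as Heron's formula, since $u|y|$ is twice the area of the triangle with side lengths $r$, $\bar r$, $|u|$.

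The only step requiring any care is the bookkeeping in \eqref{eqn:y^2}, and there is no genuine obstacle. I will also remark that none of the three arguments divides by $r$ or $\bar r$, which is why they remain valid near $0$.
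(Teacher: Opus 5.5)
Your proof is correct. For \eqref{eqn:triangle} and \eqref{eqn:Apollonius} it matches the paper's argument; the paper's appeal to Apollonius's theorem amounts to your relation $\bar r^2=r^2-2ux+u^2$.

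For \eqref{eqn:y^2} you take a different route. The paper computes the area of the triangle with vertices $(x,y)$, $(0,0)$, $(u,0)$ in two ways: as $|uy|/2$, and as $\tfrac12 r\bar r\sin\theta$ with $\theta$ the angle at $(x,y)$. It then eliminates $\theta$ using the law of cosines $r^2+\bar r^2-2r\bar r\cos\theta=u^2$. You instead factor the right-hand side as a difference of squares and substitute the coordinate expressions for $r^2+\bar r^2$ and $\bar r^2-r^2$.

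The paper's version is shorter and shows where the identity comes from. It is the Heron-formula reading you mention, with $|u||y|$ (not $u|y|$) equal to twice the area. However, it implicitly needs $\theta$ to be defined, so the trivial cases $r=0$ or $\bar r=0$ fall outside it. Your polynomial verification holds uniformly for all $(x,y)$ and $u$. That fits the paper's remark that these relations are needed even when $r,\bar r$ are near $0$.
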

\begin{proof}
    The first inequality \eqref{eqn:triangle} follows from the triangle inequality. The second identity \eqref{eqn:Apollonius} follows from Apollonius's theorem:
    \begin{equation*}
        \bar r^2+r^2=2\left(x-\frac u 2\right)^2+2|y|^2+2\frac {u^2}4=2r^2-2xu+u^2.
    \end{equation*}
    For the third identity \eqref{eqn:y^2}, consider the triangle with vertices $(x,y)$, $(0,0)$ and $(u,0)$, whose area is equal to $|uy|/2$. By the law of sines, its area is also equal to $r\bar r \sin \theta/2$, where $\theta$ is the angle at the vertex $(x,y)$. By the law of cosines, we have $r^2+\bar r^2-2r \bar r\cos \theta=u^2$. Combining these relations gives \eqref{eqn:y^2}.
\end{proof}

In this section, we perform some preliminary reductions for the proofs of Theorems \ref{thm:anti-compression_high_dimension},  \ref{thm:anti-compression_3D} and \ref{thm:anti-compression_near_0}.

We record an immediate proposition, which roughly states that $f$ has (uniform) regularity one order lower than $g$.
\begin{prop}\label{prop:C2_norm_bdd}
    The function $f$ defined in \eqref{eqn:defn_f} satisfies that 
    \begin{equation*}
        \|f\|_{C^1}+\|D^2 f\|_{\infty}\lesssim (|u|+\Delta)\|g\|_{\mathrm{reg}},
    \end{equation*}
    where the implicit constant depends only on $n$. Moreover, if $g$ satisfies the mild regularity conditions in 
    Theorem \ref{thm:anti-compression_3D}(b), 
    %\eqref{eqn:mild_regularity}, 
    then the function $(|u|+\Delta)^{-1}D^2f$ is continuous, with its modulus of continuity depending only on $n$, $\|g\|_{\mathrm{reg}}$ and the moduli of continuity of $t\mapsto g_{tta}$ and $t\mapsto g_{ttt}$.
\end{prop}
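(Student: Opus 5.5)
We would prove this by direct differentiation. The key observation is that $f$ vanishes identically when $u=0$ and $\Delta=0$, so every derivative can be written as a difference of terms that cancel at that point. The plan is to split $f$ as
\begin{equation*}
f(x,y)=\big[g(\bar a,\bar r)-g(a,\bar r)\big]+\big[g(a,\bar r)-g(a,r)\big]=:f_1+f_2 ,
\end{equation*}
and to treat each piece separately. We work on a convex piece $\Omega$ from Remark \ref{rmk:Omega}, where $r,\bar r\in I\subseteq[1,2]$. Consequently $r$ and $\bar r$ are smooth functions of $(x,y)$, with all derivatives up to order $2$ bounded by constants depending only on $n$. Also $|u|\lesssim 1$ on such a piece, by \eqref{eqn:triangle}.

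For $f_1$ we use the chain rule. We have $\nabla f_1=\big(g_t(\bar a,\bar r)-g_t(a,\bar r)\big)\nabla\bar r$. The second derivative is
\begin{equation*}
D^2 f_1=\big(g_{tt}(\bar a,\bar r)-g_{tt}(a,\bar r)\big)\nabla\bar r\otimes\nabla\bar r+\big(g_t(\bar a,\bar r)-g_t(a,\bar r)\big)D^2\bar r .
\end{equation*}
The Lipschitz norms of $g$, $g_t$ and $g_{tt}$ in $a$ bound each bracket by $\Delta\|g\|_{\mathrm{reg}}$. Here $\|g\|_{\mathrm{Lip}}$ is controlled, since $g_a$ is not directly in $\|\cdot\|_{\mathrm{reg}}$. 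If needed, we first normalise $g$ by subtracting a function of $a$ alone, which only changes vertical translations. Alternatively we bound $|f|$ only in the derivative part, since $\|f\|_{C^1}$ here may be read as the gradient plus the sup after such a normalisation. I expect this to be a minor bookkeeping point.

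For $f_2$ we compare the points $\bar r$ and $r$. We have $\nabla f_2=g_t(a,\bar r)\nabla\bar r-g_t(a,r)\nabla r$, and $|\bar r-r|\le|u|$. The function $\bar r$ is $r$ shifted by $u$ in $x$, so $|\nabla\bar r-\nabla r|+|D^2\bar r-D^2 r|\lesssim|u|$ because $r\ge 1$. Writing each difference in telescoping form gives
\begin{equation*}
(g_t(a,\bar r)-g_t(a,r))\nabla\bar r+g_t(a,r)(\nabla\bar r-\nabla r),
\end{equation*}
and the same for $D^2 f_2$, which contains $g_{tt}(a,\bar r)\nabla\bar r\otimes\nabla\bar r-g_{tt}(a,r)\nabla r\otimes\nabla r$ and $g_t D^2$ terms. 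The Lipschitz bounds on $g_t$ and $g_{tt}$ in $t$ then give everything $\lesssim|u|\,\|g\|_{\mathrm{reg}}$.

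For the continuity statement, we write the brackets as integrals. For instance, $g_{tt}(\bar a,\bar r)-g_{tt}(a,\bar r)=\Delta\int_0^1 g_{tta}(a+s\Delta,\bar r)\,ds$ and $g_{tt}(a,\bar r)-g_{tt}(a,r)=(\bar r-r)\int_0^1 g_{ttt}(a,r+s(\bar r-r))\,ds$. Dividing by $|u|+\Delta$ leaves bounded coefficients, namely $\Delta/(|u|+\Delta)$ and $(\bar r-r)/(|u|+\Delta)$, times averages of $g_{tta}$ and $g_{ttt}$. These averages are uniformly continuous by equicontinuity. The $g_t$-difference terms are handled the same way; they are Lipschitz in $t$ with the same control.

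The main obstacle is the coefficient $(\bar r-r)/(|u|+\Delta)$, which is bounded but not obviously continuous in $(x,y)$ uniformly in $u$. To handle it, we write $\bar r-r=u\int_0^1\partial_x\rho_s\,ds$, where $\rho_s$ denotes the distance to the point $(su,0)$. This gives $(\bar r-r)/(|u|+\Delta)=\frac{u}{|u|+\Delta}\int_0^1\partial_x\rho_s\,ds$. The integral is smooth, with modulus depending only on $n$, and the prefactor is constant in $(x,y)$. The terms $(\nabla\bar r-\nabla r)/|u|$ and $(D^2\bar r-D^2r)/|u|$ are treated the same way.
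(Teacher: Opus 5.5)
Your proposal is correct and is essentially the paper's argument: a direct chain-rule computation of $Df$ and $D^2 f$, bounded via the Lipschitz norms in $\|g\|_{\mathrm{reg}}$ together with $|\bar r-r|\le|u|$ (the paper just writes $f_{xx}$ out in coordinates), and the mean value theorem plus equicontinuity for the continuity claim. Your caveat that $\|f\|_\infty$ is not literally controlled by $\|g\|_{\mathrm{reg}}$ unless one normalises $g(\cdot,t_0)=0$ is fair and harmless, since only derivatives of $f$ and level sets with arbitrary $v$ are ever used. Your ``main obstacle'' is in fact immediate from the bound $|\nabla\bar r-\nabla r|\lesssim|u|$ you already proved, which makes $(\bar r-r)/(|u|+\Delta)$ Lipschitz with an $O(1)$ constant; the $\rho_s$ detour is unnecessary, because its integrand is not smooth when $(x,y)$ lies on the segment from $(0,0)$ to $(u,0)$ (possible once $|u|\ge 2$), although the integral itself equals $(r-\bar r)/u$ and is fine.
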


{\it Remark.} The ``moreover" part is only used when $n=3$ and when $g$ satisfies the hyperbolic cinematic curvature condition of Definition \ref{defn:cinematic}.

\begin{proof}
    The first assertion follows from straightforward computation; we give a brief idea for the typical example $f_{xx}$. We have
    \begin{equation}\label{eqn:fxx_first_time}
    \begin{aligned}
        & f_{xx}(x,y)\\
        =& g_{tt}(\bar a,\bar r)\frac{(x-u)^2}{\bar r^2}+g_t(\bar a,\bar r)\frac{|y|^2}{\bar r^3}-g_{tt}(a,r)\frac{x^2}{r^2}-g_t(a,r)\frac{|y|^2}{r^3}.
    \end{aligned}
    \end{equation}
    Using $r,\bar r\sim 1$, we have $|r^{-k}-\bar r^{-k}|\lesssim |u|$, $k=2,3$. Then the result follows from the definition of $\|g\|_{\mathrm{reg}}$. The ``moreover" part follows from the mean value theorem and the equicontinuity assumptions. 
    
\end{proof}

To prove Theorems \ref{thm:anti-compression_high_dimension} and \ref{thm:anti-compression_3D}, it suffices to prove the following slightly stronger measure estimate.

\begin{thm}[Intersection bound away from the origin]\label{thm:Cordoba_size}
Assume $g$ is nondegenerate away from the origin in the sense of Definition \ref{defn:nondegenerate_away_from_0}. Let $u\in \R$, let $a,\bar a\in A$ with $\bar a-a=\Delta\ge 0$, and let $f$ be as in \eqref{eqn:defn_f}. For $\delta\in (0,1/2)$ and $v\in \R$, consider the following level set (recall $\Omega$ defined in Remark \ref{rmk:Omega})
\begin{equation}\label{eqn:sublevel_set_delta}
    E_\delta:=\{(x,y)\in \Omega:|f(x,y)-v|<\delta\}.
\end{equation}
Then the following statements hold.
\begin{enumerate}
    \item If $n\ge 4$, then $|E_\delta|\lesssim (|u|+\Delta)^{-1}\delta$.

    \item If $n=3$ and $g$ satisfies the elliptic cinematic curvature condition of Definition \ref{defn:cinematic}, then $|E_\delta|\lesssim (|u|+\Delta)^{-1}\delta$, where the implicit constant also depends on $\kappa$.

    \item If $n=3$ and $g$ satisfies the hyperbolic cinematic curvature condition of Definition \ref{defn:cinematic}, as well as 
    the mild regularity conditions in Theorem \ref{thm:anti-compression_3D}(b), 
    %\eqref{eqn:mild_regularity}, 
    then $|E_\delta|\lesssim (|u|+\Delta)^{-1}\delta|
    \log \delta|$, where the implicit constant also depends on $\kappa$ and the moduli of continuity of $t\mapsto g_{tta}$ and $t\mapsto g_{ttt}$.

\end{enumerate}
\end{thm}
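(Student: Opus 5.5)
The plan is to bound the sublevel set $E_\delta$ by a lower bound on derivatives of $f$, using the coordinates $(x,y)\in\R\times\R^{n-2}$ with $r,\bar r\in I$ comparable to $1$. The basic tool is the one-dimensional sublevel estimate: if $|\partial_e f|\ge \mu$ on a convex set along a direction $e$, then each line in direction $e$ meets $E_\delta$ in length $\lesssim \delta/\mu$, so $|E_\delta|\lesssim \delta/\mu$. A similar statement with $|\partial_e^2 f|\ge\mu$ gives length $\lesssim (\delta/\mu)^{1/2}$. Since $\Omega$ has bounded size, it suffices to partition $\Omega$ into $O(1)$ regions, or dyadic families of regions, on each of which some first derivative is $\gtrsim |u|+\Delta$. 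Where no first derivative is that large, the aim is a second-derivative bound of size $|u|+\Delta$ acting in a direction where the bad set is thin.

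\textbf{Step 1: first derivatives.} Compute
\begin{equation*}
\nabla_x f=g_t(\bar a,\bar r)\frac{x-u}{\bar r}-g_t(a,r)\frac{x}{r},\qquad \nabla_y f=y\Big(\frac{g_t(\bar a,\bar r)}{\bar r}-\frac{g_t(a,r)}{r}\Big).
\end{equation*}
The gradient is a combination of the unit vectors from $(u,0)$ and from $0$ to $(x,y)$. If $|y|$ is not small (say $|y|\gtrsim$ a small constant times $1$), these two vectors are transverse with angle $\gtrsim |u|$ by \eqref{eqn:y^2}. In that case $|\nabla f|\gtrsim |u|\min(|g_t|)+|\,|g_t(\bar a,\bar r)|-|g_t(a,r)|\,|$. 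Using \eqref{eqn:gat} together with Lipschitz control in $t$, the second term is $\gtrsim \Delta$ unless $|\bar r-r|$ is compensating, and then the first term dominates. So on such regions $|\nabla f|\gtrsim |u|+\Delta$ and the bound follows. Near the axis $y=0$ the angle degenerates. There, in $n\ge4$, I will use the $y$-directions: $y\in\R^{n-2}$ has dimension $\ge2$. The set where $|\nabla_y f|=|y|\,|h|$ is small, with $h:=g_t(\bar a,\bar r)/\bar r-g_t(a,r)/r$, is either a thin neighbourhood of $\{y=0\}$ (codimension $\ge2$, so measure $\lesssim$ radius$^{2}$) or a region where $|h|$ is small. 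On the latter region I expect $|\partial_x f|\gtrsim |u|+\Delta$ via \eqref{eqn:Apollonius}. Dyadically decomposing in $|y|\sim 2^{-k}$ and in the size of $|h|$, and using the $|y|^{n-3}\ge|y|$ Jacobian gain, each piece contributes $\lesssim\delta/(|u|+\Delta)$ up to a summable geometric factor. This gives (1).

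\textbf{Step 2: $n=3$.} Here $y\in\R$ and the near-axis region $|y|\lesssim \rho$ has measure $\sim\rho$ with no extra gain, so first derivatives alone lose. On this region I will use $\partial_y^2 f$, which at $y=0$ equals $h$, together with $\partial_x f$ and $\partial_x^2 f$ from \eqref{eqn:fxx_first_time}. The point is that if $\partial_x f$, $h$ and the relevant second derivative all vanish to order $|u|+\Delta$ simultaneously, then the graphs of $g(a,\cdot)$ and $g(\bar a,\cdot-u)+v$ are second-order tangent. This is excluded quantitatively by \eqref{eqn:general_cinematic_curvature}, via the determinant $D$ in \eqref{eqn:defn_D}. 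I expect the conclusion to be that the Hessian of $f$ restricted to the bad region has a definite sign structure. In the elliptic case $\det D^2f\gtrsim(|u|+\Delta)^2$ with $f$ convex or concave there, so $E_\delta$ is a thin annulus-like set around a convex level curve of length $O(1)$, giving $\lesssim\delta/(|u|+\Delta)$. In the hyperbolic case $D^2 f$ is indefinite with $|\det|\gtrsim(|u|+\Delta)^2$, so $f-v$ behaves like a saddle $(|u|+\Delta)(\xi^2-\eta^2)$. The sublevel set of a saddle has measure $\sim\delta|\log\delta|/(|u|+\Delta)$. Making this rigorous requires a uniform Morse-lemma type normal form, and this is exactly where the continuity of $(|u|+\Delta)^{-1}D^2 f$ from Proposition \ref{prop:C2_norm_bdd} enters. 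It gives (2) and (3).

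\textbf{Main obstacle.} The main obstacle is Step 2: translating the one-variable cinematic curvature and sign conditions on $D$ into a quantitative nondegeneracy of $D^2 f$ on the region where $\nabla f$ is small. This requires relating $\partial_{yy}f$, $\partial_{xx}f$ and $\partial_x f$ at near-critical points to the entries of $D$ using \eqref{eqn:Apollonius}. I would first locate the critical points of $f$ (they lie on $y=0$ by symmetry), verify the determinant sign there, and then extend to an $O(1)$-neighbourhood by the regularity bounds.
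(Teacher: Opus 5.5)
Your overall architecture matches the paper's Lemmas \ref{lem:partition_O(1)} and \ref{lem:u_tangent}: the gradient is $\gtrsim|u|+\Delta$ away from the axis or when $\Delta\gg|u|$, leaving a near-axis region. For $n\ge4$, a first-derivative argument using the radial structure in $y$ is a legitimate substitute for the paper's route, which uses the definite Hessian $D^2_yf$ together with Theorem \ref{thm:sublevel_set_measure}(a) in the $n-2\ge2$ variables $y$ for fixed $x$. However, the bookkeeping you describe fails at $n=4$. On the shell $|y|\sim2^{-k}$ you have $|\nabla_yf|=|y||h|\sim2^{-k}|u|$ and Jacobian $|y|^{n-3}$, so the shell contributes $\lesssim2^{-k(n-4)}\delta/|u|$. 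At $n=4$ there is no geometric decay. The $\sim\log(|u|/\delta)$ shells of radius above $(\delta/|u|)^{1/2}$ then give only $\frac{\delta}{|u|}\log\frac{|u|}{\delta}$, which is exactly the loss part (1) excludes. The fix is to drop the dyadic shells. Since $f(x,\cdot)$ is radial, in the variable $s=|y|^2$ one has $\partial_sf=h/2$. Wherever $\partial_xf=xh-u\,g_t(\bar a,\bar r)/\bar r$ is small, one has $|h|\gtrsim|u|$, with a fixed sign along each near-axis ray after splitting according to $h(x,0)$. So for fixed $x$ the bad set is a single $s$-interval of length $\lesssim\delta/|u|$. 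Then $|y|^{n-3}\,d|y|=\tfrac12s^{(n-4)/2}\,ds$ with $s\lesssim1$ gives $\lesssim\delta/|u|$ for every $n\ge4$. This is the same information as Lemma \ref{lem:hessian_nonzero}(1).

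For $n=3$, your Step 2 follows the paper's route but stops before its core, Lemma \ref{lem:hessian_nonzero}. The way you propose to obtain it would not work under the hypotheses of part (2). Checking signs at the critical points on $\{y=0\}$ and extending ``by the regularity bounds'' fails because $g_{tt}$ is only Lipschitz. Thus $D^2f$ is $O(|u|+\Delta)$ in size but only $O(1)$-Lipschitz, so a lower bound of size $|u|+\Delta$ propagates only a distance $O(|u|+\Delta)$. Meanwhile, the cubes on which all first partials are $<\sigma(|u|+\Delta)$, over which $E_\delta$ may spread, have size $\sim\sigma$. The paper argues pointwise instead. At every point of such a cube, \eqref{eqn:26-05-08} gives $|g_t(\bar a,\bar r)-g_t(a,r)|<\sigma(|u|+\Delta)$ and $0\le u^2-(\bar r-r)^2\lesssim\sigma^2(|u|+\Delta)^2$. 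Using $\lambda_1$, this yields $\Delta\lesssim|u|\sim|\bar r-r|$ and $|y|\lesssim\sigma$. The Hessian entries are then controlled as follows. First, $f_{yy}=g_t(a,r)(\bar r^{-1}-r^{-1})+O(\sigma|u|)$, so $|f_{yy}|\sim|u|$ with the sign of $(r-\bar r)g_t$. This is geometric and needs no cinematic curvature, contrary to your ``simultaneous vanishing'' heuristic. Second, $f_{xy}=O(|u||y|)$. Third, $f_{xx}=g_{tt}(\bar a,\bar r)-g_{tt}(a,r)+O(\sigma^2|u|)$, which \eqref{eqn:general_cinematic_curvature} forces to be $\gtrsim_\kappa|u|$. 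This works precisely because the two values of $g_t$ nearly agree while $|\bar a-a|+|\bar r-r|\gtrsim|u|$. For the sign, $D$ at $(t,\bar t)=(r,\bar r)$ equals $-(g_t(\bar a,r)-g_t(a,r))(g_{tt}(\bar a,\bar r)-g_{tt}(a,r))+O(\sigma|u|\Delta)$, with main term $\gtrsim|u|\Delta$. So ellipticity forces $f_{xx}$ and $f_{yy}$ to have the same sign. Finally, for part (3) a Morse normal form is delicate when $D^2f$ is merely continuous, since the usual Morse coordinates need not be bi-Lipschitz. What is used instead is Theorem \ref{thm:sublevel_set_measure}(b), applied to the rescaled $f$: a sublevel-set bound for functions whose Hessian has all eigenvalues $\gtrsim1$ and a uniform modulus of continuity. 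That is where the ``moreover'' part of Proposition \ref{prop:C2_norm_bdd} enters.
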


\begin{rmk}
%{\it Remark.} 
If we remove the assumption that $\lambda_1>0$ in \eqref{eqn:gat}, then we can still obtain $|E_\delta|\lesssim |u|^{-1}\delta |\log \delta|$ in the context of Theorem \ref{thm:Cordoba_size} by the proof in {\S}\ref{sec:sublevel_measure}. (See the remark after the proof of Lemma \ref{lem:u_tangent}). In particular, taking $\bar a=a$, we see that this gives an estimate for the intersection of a given surface with its translation.
\end{rmk}

The rest of this section is devoted to the proof of Theorem \ref{thm:Cordoba_size}.

%%%%%%%%%%%%%%%%%%%%

\subsection{Gradient norm}
We first compute $|Df|$:
\begin{equation}\label{eqn:f_x_f_y}
\begin{aligned}
    f_x(x,y)&= g_t(\bar a,\bar r)\frac{x-u}{\bar r}-g_t(a,r)\frac{x}{r},\\
    f_y(x,y)&=g_t(\bar a,\bar r)\frac{y}{\bar r}-g_t(a,r)\frac{y}{r}.
\end{aligned}   
\end{equation}
Thus
\begin{align}
     &|Df(x,y)|^2 \nonumber\\
     =&g_t(\bar a,\bar r)^2+g_t(a,r)^2-2g_t(\bar a,\bar r)g_t(a,r)\frac{r^2-xu}{r\bar r}\nonumber\\
     =&(g_t(\bar a,\bar r)-g_t(a,r))^2+2g_t(\bar a,\bar r)g_t(a,r)\frac{r\bar r-r^2+xu}{r\bar r}\nonumber\\
     =&(g_t(\bar a,\bar r)-g_t(a,r))^2+\frac{g_t(\bar a,\bar r)g_t(a,r)}{r\bar r}\left[u^2-(\bar r-r)^2\right],\label{eqn:26-05-08}
\end{align}
where we have used \eqref{eqn:Apollonius} in the last line.

%%%%%%%%%%%%%%%%%%%%

\subsection{A dichotomy}
Recall Proposition \ref{prop:C2_norm_bdd}, which gives $\|f\|_{C^1}+\|D^2 f\|_\infty\lesssim |u|+\Delta $. Let $\sigma\sim 1$ be a small enough scale to be determined. We start with an elementary lemma based on continuity (in the same spirit as \cite[Lemma 3.1]{KOV2021} and \cite[Lemma 3.5]{PYZ2022}). It allows us to localise the variables $(x,y)$ on which the partial derivatives of $f$ have uniform upper or lower bounds. 

\smallskip\smallskip
\fbox{Notation} For the rest of this section, unless otherwise specified, the implicit constants do \textit{not} depend on $\sigma$. 
\begin{lem}\label{lem:partition_O(1)}
    There exists a cover of $\Omega$ by cubes $Q$ of diameter $c_\sigma\sim \sigma$ such that the following holds for each $Q$: for any multi-index $|\gamma|=1$, either of the following alternatives holds (depending on the choice of $\gamma,Q$):
    \begin{enumerate}
        \item [(S)] $|\partial^\gamma f(x,y)|<\sigma(|u|+\Delta)$ for all $(x,y)\in Q$.
        \item [(L)] $|\partial^\gamma f(x,y)|\ge \sigma(|u|+\Delta)/2$ for all $(x,y)\in Q$.
    \end{enumerate}
\end{lem}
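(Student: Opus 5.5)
The plan is a uniform continuity argument based on Proposition \ref{prop:C2_norm_bdd}. Write $M:=|u|+\Delta$. That proposition gives $\|D^2 f\|_\infty\le C_0 M$, where $C_0$ depends only on $n$ and $\|g\|_{\mathrm{reg}}$ and not on $\sigma$. Consequently each first partial derivative $\partial^\gamma f$ with $|\gamma|=1$ is Lipschitz on the convex set $\Omega$ with constant at most $C_0 M$. Convexity of $\Omega$ (Remark \ref{rmk:Omega}) is used here, so the mean value theorem applies along segments. If needed, I would cover each of the $O(1)$ convex pieces separately.

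Next choose $c_\sigma:=\sigma/(4C_0)\sim\sigma$, and cover $\Omega$ (which is bounded, of diameter $O(1)$) by a grid of cubes $Q$ of diameter $c_\sigma$, with $O(\sigma^{-(n-1)})$ of them. Fix $Q$ and $\gamma$. On $Q\cap\Omega$ the oscillation of $\partial^\gamma f$ is then at most $C_0 M c_\sigma=\sigma M/4$. There are two cases.
\begin{itemize}
\item If $\inf_{Q}|\partial^\gamma f|\ge \sigma M/2$, alternative (L) holds.
\item Otherwise some point $p\in Q$ has $|\partial^\gamma f(p)|<\sigma M/2$. Then every $q\in Q$ satisfies $|\partial^\gamma f(q)|\le \sigma M/2+\sigma M/4<\sigma M$, so (S) holds.
\end{itemize}
Since the dichotomy is decided for each $\gamma$ separately, the same cover works for all $\gamma$ simultaneously.

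The only subtle point, and the main thing to check, is that the Lipschitz bound holds between any two points of $Q\cap\Omega$. The segment joining them must stay inside a region where $D^2 f$ is controlled. This is why I would intersect each cube with a single convex piece $\Omega$ from Remark \ref{rmk:Omega} (where $r,\bar r\in I$ and hence $r,\bar r\sim1$, so the bound of Proposition \ref{prop:C2_norm_bdd} is valid), and work with $Q\cap\Omega$, which is convex. The degenerate case $M=0$ is trivial: then $f$ is constant and (S) fails only vacuously, so (L) holds with both sides equal to zero.
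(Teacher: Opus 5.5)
Your proposal is correct and follows the paper's argument. Both use $\|D^2 f\|_\infty\lesssim |u|+\Delta$ (Proposition \ref{prop:C2_norm_bdd}, with the constant independent of $\sigma$) to make each $\partial^\gamma f$ oscillate by less than a fixed fraction of $\sigma(|u|+\Delta)$ on cubes of diameter $c_\sigma\sim\sigma$; if (L) fails at one point of $Q$, the triangle inequality then forces (S) on all of $Q$. Your restriction to $Q\cap\Omega$ with $\Omega$ convex, so that the mean value theorem applies, just makes explicit what the paper leaves implicit.
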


\begin{proof}
    For $|\gamma|=1$, the function $(|u|+\Delta)^{-1}\partial^\gamma f(x,y)$ has Lipschitz norm $\lesssim 1$, so there exists some $c_\sigma\sim \sigma$ such that if $|(x,y)-(x',y')|<c_\sigma$, then $|\partial^\gamma f(x,y)-\partial^\gamma f(x',y')|<\sigma(|u|+\Delta)/2$. Now suppose that the alternative (L) fails for $\partial^\gamma$ and $Q$, so there exists some $(x',y')$ such that $|\partial^\gamma f(x,y)-\partial^\gamma f(x',y')|<\sigma(|u|+\Delta)/2$. Then by the triangle inequality, the alternative (S) must hold for $\partial^\gamma$ and $Q$.
\end{proof}

%%%%%%%%%%%%%%%%%%%%

\subsection{Near-tangent case}\label{sec:near_tangent}

We state the following key lemma, which intuitively says that for a given $Q$, if the alternative (S) holds for every $|\gamma|=1$, then the functions $g(\bar a,\bar r)$ and $g(a,r)$ are nearly tangent, and the difference function $f$ has many eigenvalues bounded away from $0$.

\begin{lem}\label{lem:hessian_nonzero}
In the near-tangent case, if $\sigma\sim 1$ is small enough, and if for some $Q$ the alternative (S) holds for every $|\gamma|=1$, then for $(x,y)\in Q$, the following statements hold:
\begin{enumerate}
    \item \label{item:key_lemma_1} The Hessian matrix $D^2 f(x,y)$ satisfies that $|f_{y_iy_i}|\sim |u|+\Delta$ and $|f_{xy_i}|+|f_{y_iy_j}|\ll |u|+\Delta$, for $1\le i\ne j\le n-2$.
    \item \label{item:key_lemma_2} In addition to part \eqref{item:key_lemma_1}, if the cinematic curvature condition in Definition \ref{defn:cinematic} holds for $g$, then we further have $|f_{xx}|\sim_\kappa |u|+\Delta$. Thus, all eigenvalues of $D^2 f$ have absolute values comparable to $|u|+\Delta$.
    \item \label{item:key_lemma_3} In addition to part \eqref{item:key_lemma_2}, if $g$ is elliptic, then $D^2 f$ is either positive definite or negative definite.
\end{enumerate}

\end{lem}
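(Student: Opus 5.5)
The plan is to extract quantitative geometric information from the smallness of $|Df|$ via the identity \eqref{eqn:26-05-08}, and then read off the Hessian entries from explicit formulas like \eqref{eqn:fxx_first_time}. Write $G_t:=g_t(\bar a,\bar r)-g_t(a,r)$. Since $g_t$ is continuous and nonvanishing on the connected set $A\times I$, it has constant sign. Hence $g_t(\bar a,\bar r)g_t(a,r)\gtrsim 1$, and both terms on the right of \eqref{eqn:26-05-08} are nonnegative. Alternative (S) for all $|\gamma|=1$ therefore gives
\begin{equation*}
|G_t|\lesssim \sigma(|u|+\Delta),\qquad 0\le u^2-(\bar r-r)^2\lesssim \sigma^2(|u|+\Delta)^2 .
\end{equation*}

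\textbf{Reduction to the scale $|u|$.} By \eqref{eqn:gat} and the Lipschitz bound on $g_t$,
\begin{equation*}
|G_t|\ge \lambda_1\Delta - C|\bar r-r|\ge \lambda_1\Delta-C|u| .
\end{equation*}
Combining this with the upper bound on $|G_t|$ gives $\Delta\lesssim |u|$ once $\sigma$ is small, so $|u|+\Delta\sim|u|$. The second bound then gives $|\bar r-r|=|u|(1+O(\sigma))$. Plugging both into \eqref{eqn:y^2}, using $r+\bar r\sim 1$, yields $|y|\lesssim\sigma$. Thus on $Q$ we have $x\approx\pm r$ and $x-u\approx\pm\bar r$, with errors $O(\sigma)$.

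\textbf{Part (1).} Differentiate \eqref{eqn:f_x_f_y} once more. For each $i$,
\begin{equation*}
f_{y_iy_i}=\frac{g_t(\bar a,\bar r)}{\bar r}-\frac{g_t(a,r)}{r}+O\big(|y|^2(|u|+\Delta)\big).
\end{equation*}
The main term splits as $G_t/\bar r + g_t(a,r)\,(1/\bar r-1/r)$. The first piece is $O(\sigma|u|)$. The second is $\sim|\bar r-r|\sim|u|$, because $|g_t|\gtrsim1$ and $r,\bar r\sim1$. Hence $|f_{y_iy_i}|\sim|u|+\Delta$.

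The entries $f_{y_iy_j}$ ($i\ne j$) and $f_{xy_i}$ carry a factor $y_iy_j$ or $y_i$. The remaining factor is a difference, between the barred and unbarred data, of expressions such as $g_{tt}/r^2-g_t/r^3$ or $g_{tt}\,x/r^2$. As in Proposition \ref{prop:C2_norm_bdd}, each such difference is $O(|u|+\Delta)$. Together with $|y|\lesssim\sigma$, this makes these entries $\ll|u|+\Delta$.

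\textbf{Part (2).} From \eqref{eqn:fxx_first_time} and $x^2/r^2=1-|y|^2/r^2$,
\begin{equation*}
f_{xx}=g_{tt}(\bar a,\bar r)-g_{tt}(a,r)+O\big(\sigma(|u|+\Delta)\big).
\end{equation*}
The cinematic condition \eqref{eqn:general_cinematic_curvature} gives
\begin{equation*}
|G_t|+|g_{tt}(\bar a,\bar r)-g_{tt}(a,r)|\ge\kappa(\Delta+|\bar r-r|)\gtrsim\kappa|u|,
\end{equation*}
and $|G_t|\lesssim\sigma|u|$. Choosing $\sigma\ll\kappa$ therefore yields $|f_{xx}|\sim_\kappa|u|+\Delta$. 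The Hessian is then diagonal up to entries that are $\ll|u|+\Delta$, with diagonal entries of size $\sim|u|+\Delta$. A Gershgorin (or perturbation) argument gives that all eigenvalues are comparable to $|u|+\Delta$.

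\textbf{Part (3), the main obstacle.} It suffices to show that $f_{xx}$ and $f_{y_iy_i}$ have the same sign. The sign of $f_{y_iy_i}$ is that of $g_t(a,r)(1/\bar r-1/r)$, namely $-\operatorname{sign}(g_t)\operatorname{sign}(\bar r-r)$. The sign of $f_{xx}$ is that of $g_{tt}(\bar a,\bar r)-g_{tt}(a,r)$. To compare them, relabel so that $\bar t>t$ is one of $r,\bar r$, and expand $D$ in \eqref{eqn:defn_D}. Near-tangency says $g_t(\bar a,\bar r)-g_t(\bar a,r)\approx-(g_t(\bar a,r)-g_t(a,r))$, so the two columns nearly telescope and
\begin{equation*}
D\approx-\big(g_t(\bar a,r)-g_t(a,r)\big)\big(g_{tt}(\bar a,\bar r)-g_{tt}(a,r)\big).
\end{equation*}
The ellipticity inequality $D\,(g_t(\bar a,t)-g_t(a,t))\,g_t(a,t)>0$ then pins down the sign of $g_{tt}(\bar a,\bar r)-g_{tt}(a,r)$. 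This requires tracking the relation between $\operatorname{sign}(g_t(\bar a,r)-g_t(a,r))$ and $\operatorname{sign}(\bar r-r)$, which comes from near-tangency $G_t\approx0$.

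The delicate point is quantitative. The telescoping error in $D$ is of size $\sigma|u|\cdot|u|$. To know that it does not flip the sign, we need the main term to be $\gtrsim|u|^2$, and ellipticity as stated is only a strict inequality. My approach is to use part (2), which gives $|g_{tt}(\bar a,\bar r)-g_{tt}(a,r)|\gtrsim_\kappa|u|$. I would also show that $|g_t(\bar a,r)-g_t(a,r)|\gtrsim\lambda_1\Delta$ and that $|g_t(\bar a,\bar r)-g_t(\bar a,r)|\sim|u|$ when $\Delta\sim|u|$. The case $\Delta\ll|u|$ should be treated separately: it is essentially a self-translation, where $g_{tt}$ alone (nonvanishing by $\kappa$) determines the sign.
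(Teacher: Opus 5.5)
Your reduction and parts (1)--(2) follow the paper's argument. Your one-line derivation of $\Delta\lesssim|u|$ from $\lambda_1\Delta-C|u|\le|G_t|<\sigma(|u|+\Delta)$ is a slightly cleaner form of Lemma~\ref{lem:u_tangent}, and the Gershgorin step fills in a detail the paper leaves implicit. For part (3) you set up exactly the paper's comparison of $D$ with the telescoped determinant $D'=-X\,\big(g_{tt}(\bar a,\bar r)-g_{tt}(a,r)\big)$, where $X:=g_t(\bar a,r)-g_t(a,r)$.

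The gap is your bound on $D-D'$, which creates the obstacle. Only the $(1,1)$ entry is replaced, at a cost of exactly $G_t$. Its cofactor is the $(2,2)$ entry $g_{tt}(\bar a,r)-g_{tt}(a,r)$, an $a$-difference at fixed $t$, hence $O(\|g_{tt}\|_{\Lip}\Delta)$ rather than $O(|u|)$. So
\[
|D-D'|=|G_t|\,\big|g_{tt}(\bar a,r)-g_{tt}(a,r)\big|\lesssim\sigma|u|\Delta,\qquad |D'|\ge\lambda_1\Delta\cdot c\kappa|u|.
\]
The factor $\Delta$ cancels, and choosing $\sigma\ll\lambda_1\kappa$ gives $\operatorname{sign}D=\operatorname{sign}D'$ uniformly in $\Delta/|u|$. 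No separate regime is needed.

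You also need not relate $\operatorname{sign}X$ to $\operatorname{sign}(\bar r-r)$. The ellipticity inequality reads $D\cdot X\cdot g_t\approx-X^2\big(g_{tt}(\bar a,\bar r)-g_{tt}(a,r)\big)g_t>0$, so $X$ enters squared. When $\bar r<r$, relabel by swapping the pairs $(a,r)\leftrightarrow(\bar a,\bar r)$ together; this is allowed since ellipticity is assumed for all $\bar a\ne a$. It keeps the near-tangent pairing and the same $O(\sigma|u|\Delta)$ error. In both cases $\operatorname{sign}\big(g_{tt}(\bar a,\bar r)-g_{tt}(a,r)\big)=\operatorname{sign}\big((r-\bar r)g_t\big)=\operatorname{sign}f_{y_iy_i}$.

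The fallback you sketch for $\Delta\ll|u|$ would fail.

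\textbf{Cinematic curvature does not bound $|g_{tt}|$.} Condition \eqref{eqn:general_cinematic_curvature} with $\bar a=a$ bounds increments of $(g_t,g_{tt})$ from below, not $|g_{tt}|$ itself. For example, $g(a,t)=a\,g_0(t)$ with $g_0''(t_0)=0$ and $g_0'(t_0)g_0'''(t_0)<0$ is elliptic near $t_0$ by the remark after Definition~\ref{defn:cinematic}, yet $g_{tt}$ vanishes at $t_0$.

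\textbf{In this regime $g_{tt}$ is forced to be small somewhere.} Near-tangency gives $|g_t(a,\bar r)-g_t(a,r)|\lesssim\sigma|u|+\Delta\ll|\bar r-r|$. By the mean value theorem, $g_{tt}(a,\cdot)$ is then $O(\sigma+\Delta/|u|)$ somewhere between $r$ and $\bar r$, while by part (2) its values at $r$ and $\bar r$ differ by $\gtrsim\kappa|u|$. So $g_{tt}$ is not bounded away from $0$ there. The sign of $f_{xx}$ is dictated by the monotonicity of $g_{tt}$, which is exactly what ellipticity encodes through $D$. Ellipticity says nothing about a genuine self-translation, since it is assumed only for $\bar a\ne a$.

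Close part (3) with the sharper bound above instead. Since that bound is uniform in $\Delta>0$, the endpoint $\Delta=0$ (if wanted) follows by letting $\bar a\to a$.
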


\begin{proof}[Proof of Theorem \ref{thm:Cordoba_size} assuming Lemma \ref{lem:hessian_nonzero}]

By Lemma \ref{lem:partition_O(1)}, to prove Theorem \ref{thm:Cordoba_size}, losing a harmless constant depending on $\sigma$, it suffices to estimate $|E_\delta\cap Q|$ for each $Q$.

Fix a $Q$. If the alternative (L) holds for $Q$ for some $|\gamma|=1$, then by Proposition \ref{prop:gradient_lower_bound}, we have $|E_\delta\cap Q|\lesssim_\sigma (|u|+\Delta)^{-1}\delta$. It thus remains to consider the case where (S) holds for $Q$ for all $|\gamma|=1$. Consider the rescaled function
\begin{equation}
    \tilde f(x,y)=\sigma^{-2}(|u|+\Delta)^{-1}f(c_\sigma x,c_\sigma y).
\end{equation}
Since $c_\sigma\sim \sigma$, by Lemma \ref{lem:hessian_nonzero}, for a fixed $x$, all eigenvalues of $D^2_y f$ have absolute values $\gtrsim 1$. Then we consider the following cases.
\begin{itemize}
    \item If $n\ge 4$, then the required measure estimate follows from fixing $x$, applying Theorem \ref{thm:sublevel_set_measure} to $y\mapsto \tilde f(x,y)$ and then integrating in $x$.
    \item If $n=3$ and $g$ is elliptic, then the measure estimate follows from the elliptic case of Theorem \ref{thm:sublevel_set_measure} applied to $\tilde f$.
    \item If $n=3$ and $g$ is hyperbolic, then the measure estimate follows from the hyperbolic case of Theorem \ref{thm:sublevel_set_measure} applied to $\tilde f$, using Proposition \ref{prop:C2_norm_bdd}.
\end{itemize}
 
\end{proof}

%%%%%%%%%%%%%%%%%%%%
    
\subsection{Proof of Lemma \ref{lem:hessian_nonzero}}
The rest of this section is devoted to the proof of Lemma \ref{lem:hessian_nonzero}. 
\subsubsection{Auxiliary lemmas}
We first prove some auxiliary results.
\begin{lem}\label{lem:u_tangent}
    If $\sigma<\lambda_1/4$, then in the near-tangent case, we have $\Delta\lesssim |u|\sim |\bar r-r|$ and $|y|\lesssim \sigma$. %Moreover, if $|u|\le \Delta$ and $\sigma<\lambda_1/2$ (recall \eqref{eqn:gat}), then we have $\Delta\sim |u|$.
\end{lem}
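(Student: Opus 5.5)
We work on a cube $Q$ on which alternative (S) holds for every $|\gamma|=1$, so that $|Df|\lesssim \sigma(|u|+\Delta)$ on $Q$. The argument squeezes three pieces of information out of the gradient identity \eqref{eqn:26-05-08}. Recall that it writes $|Df|^2$ as the sum of two terms,
$$(g_t(\bar a,\bar r)-g_t(a,r))^2 \quad\text{and}\quad \frac{g_t(\bar a,\bar r)g_t(a,r)}{r\bar r}\left[u^2-(\bar r-r)^2\right].$$
Since $g_t$ does not vanish on the connected set $A\times I$, it has constant sign there. Hence $g_t(\bar a,\bar r)g_t(a,r)\gtrsim (\inf|g_t|)^2>0$. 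Also $r,\bar r\in I\subset[1,2]$, so $r\bar r\sim 1$. By \eqref{eqn:triangle}, $u^2-(\bar r-r)^2\ge 0$. Therefore both terms are nonnegative, and each is $\lesssim \sigma^2(|u|+\Delta)^2$. This gives
\begin{equation*}
|g_t(\bar a,\bar r)-g_t(a,r)|\lesssim \sigma(|u|+\Delta),\qquad u^2-(\bar r-r)^2\lesssim \sigma^2(|u|+\Delta)^2.
\end{equation*}

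Next we show $\Delta\lesssim |u|$. Split
$$g_t(\bar a,\bar r)-g_t(a,r)=\big(g_t(\bar a,\bar r)-g_t(a,\bar r)\big)+\big(g_t(a,\bar r)-g_t(a,r)\big).$$
By \eqref{eqn:gat}, the first difference has absolute value at least $\lambda_1\Delta$. The second is at most $\|g_t\|_{\Lip}|\bar r-r|\le \|g\|_{\mathrm{reg}}|u|$, using \eqref{eqn:triangle}. Combining with the first bound above,
$$\lambda_1\Delta\le C\sigma(|u|+\Delta)+\|g\|_{\mathrm{reg}}|u|.$$
If $\sigma$ is small relative to $\lambda_1$ (the hypothesis $\sigma<\lambda_1/4$, adjusted by the harmless constant $C$), the $\sigma\Delta$ term can be absorbed into the left side. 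This leaves $\Delta\lesssim |u|$, so $|u|+\Delta\sim|u|$.

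With $|u|+\Delta\sim |u|$, the second bound becomes $u^2-(\bar r-r)^2\lesssim \sigma^2u^2$. Taking $\sigma$ small, this forces $(\bar r-r)^2\ge u^2/2$. Combined with $|\bar r-r|\le |u|$ from \eqref{eqn:triangle}, we get $|u|\sim|\bar r-r|$.

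For $y$ we use \eqref{eqn:y^2}:
$$4u^2|y|^2=\left[(\bar r+r)^2-u^2\right]\left[u^2-(\bar r-r)^2\right].$$
The first factor is at most $(r+\bar r)^2\lesssim 1$, and the second is $\lesssim\sigma^2u^2$. Dividing by $u^2$ gives $|y|^2\lesssim\sigma^2$. This step needs $u\ne 0$. That holds in our situation: if $u=0$ then $\Delta\lesssim|u|$ gives $\Delta=0$, and both sides of every claimed bound are trivial.

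The main point requiring care is the absorption argument for $\Delta\lesssim|u|$. It is the only place where the constant-size threshold on $\sigma$ relative to $\lambda_1$ enters. One must check that the implicit constant in $|Df|\lesssim\sigma(|u|+\Delta)$ coming from (S) is dimensional, so that the threshold is legitimate. Everything after that is direct substitution into \eqref{eqn:triangle} and \eqref{eqn:y^2}.
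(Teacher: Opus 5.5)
Your proof is correct and follows the paper's argument. You use the same two consequences of \eqref{eqn:26-05-08}, the same splitting of $g_t(\bar a,\bar r)-g_t(a,r)$ through $g_t(a,\bar r)$ using $\lambda_1$ and $|\bar r-r|\le|u|$, and the same use of \eqref{eqn:y^2} for $y$. The only difference is that you absorb the $\sigma\Delta$ term directly instead of treating the case $|u|<\Delta$ separately.

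One small correction: when $u=0$ the bound $|y|\lesssim\sigma$ is not trivial. That case is vacuous, though, since $u=0$ forces $\Delta=0$ and then the strict inequality in (S) cannot hold.
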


\begin{proof}%[Proof of Lemma \ref{lem:u_tangent}]
    We continue from \eqref{eqn:26-05-08}. Using the assumption that $|Df|<\sigma (|u|+\Delta)$ and $|g_t|,r,\bar r\sim 1$, we have
    \begin{align}
        &|g_t(\bar a,\bar r)-g_t(a,r)|<\sigma (|u|+\Delta),\label{eqn:26-06-04_01}\\
        &0\le u^2-(\bar r-r)^2\lesssim \sigma^{2} (|u|+\Delta)^{2}.\label{eqn:26-06-04_02}
    \end{align}
    Recalling \eqref{eqn:y^2} and using \eqref{eqn:26-06-04_02}, we have
    \begin{equation}\label{eqn:26-06-04_03}
        |uy|\lesssim \sqrt{(\bar r+r)^2-u^2}\sqrt{u^2-(\bar r-r)^2}\lesssim \sigma (|u|+\Delta),
    \end{equation}
    so $|y|\lesssim \sigma$ if we can show that $\Delta\lesssim |u|$.

    Indeed, if $\Delta \le |u|$ then we are done, so suppose that $|u|<\Delta$. Then \eqref{eqn:26-06-04_01} gives $|g_t(\bar a,\bar r)-g_t(a,r)|<2\sigma \Delta$. On the other hand, by the assumption that $\lambda_1>0$, we have 
    \begin{equation}\label{eqn:26-09-02'}
        |g_t(\bar a,\bar r)-g_t(a,\bar r)|\ge \lambda_1 \Delta.
    \end{equation}
    Thus, if $\sigma<\lambda_1/4$, then we have $|g_t(a,\bar r)-g_t(a,r)|\gtrsim \Delta$. In particular, using $|g_{tt}|\lesssim 1$, we have $|\bar r-r|\gtrsim \Delta$. However, since $|\bar r-r|\le |u|$, this gives $\Delta \lesssim |u|$. Lastly, plugging this into \eqref{eqn:26-06-04_02}, we have $|\bar r-r|\gtrsim |u|$, so $|\bar r-r|\sim |u|$.

\end{proof}
{\it Remark.} It can be seen from the proof above that if $|u|\gtrsim \Delta$, then one can deduce $|y|\lesssim \sigma$ directly from \eqref{eqn:26-06-04_03} without using \eqref{eqn:26-09-02'}.

%%%%%%%%%%

\subsubsection{Proof of Lemma \ref{lem:hessian_nonzero}}

    By Lemma \ref{lem:u_tangent}, we now have $|u|+\Delta\sim |u|$. We first consider the off-diagonal entries $f_{xy_i}$ and $f_{y_iy_j}$ for $i\ne j$. Since $f_{xy_i}$ and $f_{y_iy_j}$, $i\ne j$ contain a factor $y_i$, and since $|y|\lesssim \sigma\ll 1$ by Lemma \ref{lem:u_tangent}, we have $|f_{xy_i}|+|f_{y_iy_j}|\lesssim |uy_i|\ll |u|$.

    Now we consider the diagonal terms $f_{y_iy_i}$. By \eqref{eqn:f_x_f_y} and direct computation,
    \begin{equation}\label{eqn:fyiyi}
    \begin{aligned}
        &f_{y_iy_i}(x,y)\\
        =&g_{tt}(\bar a,\bar r)\frac{y_i^2}{\bar r^2}-g_{tt}(a,r)\frac{y_i^2}{r^2}
         +g_{t}(\bar a,\bar r)\frac{\bar r^2-y_i^2}{\bar r^{3}}
        -g_{t}(a,r)\frac{r^2-y_i^2}{r^3}.
    \end{aligned}
    \end{equation}
    Using $r,\bar r\sim 1$ and Lemma \ref{lem:u_tangent}, the sum of the first two terms involving $g_{tt}$ is of order $O(|u||y|^2)=O(|u| \sigma^2)$. Also, using $|\bar r-r|\le |u|$, $|y|\lesssim \sigma$ and \eqref{eqn:26-06-04_01}, the last two terms equal
    \begin{equation*}
        g_t(a,r)(\bar r^{-1}-r^{-1})+O(\sigma |u|).
    \end{equation*}
    Since $|g_t|\sim 1$ and $r,\bar r\sim 1$, it is comparable to $|\bar r-r|$, which is comparable to $|u|$ by Lemma \ref{lem:u_tangent}.

    Now we prove part \eqref{item:key_lemma_2}. To this end, starting from \eqref{eqn:fxx_first_time}, we rewrite
\begin{equation}\label{eqn:26-08-04}
    \begin{aligned}
        & f_{xx}(x,y)\\
        =& g_{tt}(\bar a,\bar r)-g_{tt}(a,r)\\
        +&|y|^2\left[g_t(\bar a,\bar r)\bar r^{-3}-g_t(a,r)r^{-3}\right]\\
        -&|y|^2\left[g_{tt}(\bar a, \bar r)\bar r^{-2}-g_{tt}(a,r)r^{-2}\right].
    \end{aligned}
\end{equation}

    By \eqref{eqn:26-08-04} and using Lemma \ref{lem:u_tangent} and $r,\bar r\sim 1$, the terms involving $|y|^2$ are $O(\sigma^2 |u|)$. Thus, it suffices to show that    
    \begin{equation}\label{eqn:26-06-09}
        |g_{tt}(\bar a,\bar r)-g_{tt}(a,r)|\gtrsim |u|.
    \end{equation}
    However, by \eqref{eqn:26-06-04_01}, we see that it follows directly from the definition of cinematic curvature \eqref{eqn:general_cinematic_curvature}.

    To prove part \eqref{item:key_lemma_3}, we trace back and check that $f_{y_iy_i}$ has the same sign as $(r- \bar r)g_t$. Meanwhile, the sign of $f_{xx}$ is the same as the sign of $g_{tt}(\bar a,\bar r)-g_{tt}(a,r)$. To analyse this, we go back to the determinant $D$ in \eqref{eqn:defn_D}. By \eqref{eqn:26-06-04_01}, we have $|D-D'|\lesssim \sigma |u|\Delta$ where
    \begin{equation*}
        D':=\det \begin{pmatrix}
        g_t(\bar a, r)-g_t( a,r) & g_t(\bar a, r)-g_t( a,r)\\
        g_{tt}(\bar a,\bar r)-g_{tt}(\bar a,r) & g_{tt}(\bar a, r)-g_{tt}( a,r)
    \end{pmatrix}.
    \end{equation*}
    By direct computation,
    \begin{align*}
        D'=-(g_t(\bar a, r)-g_t( a,r)) (g_{tt}(\bar a,\bar r)-g_{tt}(a,r)),
    \end{align*}
    which has absolute value $\gtrsim |u|\Delta$, using $\lambda_1>0$. Thus, if $\sigma$ is small enough, $D$ and $D'$ have the same sign. By definition of ellipticity, this means that $D'$ has the same sign as $(\bar r- r)(g_t(\bar a,r)-g_t(a,r))g_t$, or equivalently, $g_{tt}(\bar a,\bar r)-g_{tt}(a,r)$ has the same sign as $(r-\bar r)g_t$. Thus, $f_{xx}$ and $f_{y_iy_i}$ have the same sign. This finishes the proof of Lemma \ref{lem:hessian_nonzero}.
\smallskip

%%%%%%%%%%%%%%%%%%%%%%%%%%%%%%%%%%%%%%%%

\section{Anti-compression near the rotation axis}\label{sec:near_axis}
In this section, we prove Theorems \ref{thm:Cordoba_size_near_0} and \ref{thm:full_surface_anti_compression}. We first deal with Theorem \ref{thm:Cordoba_size_near_0}, and give a brief idea of proving Theorem \ref{thm:full_surface_anti_compression} at the end.

%%%%%%%%%%%%%%%%%%%%

\subsection{Reductions}
First, note that the regularity issue is more subtle than the disjoint-from-axis case, due to the possible singularity of $f$ at the points $(x,y)=(0,0)$ and $(x,y)=(u,0)$. For a small enough $\eps\in (0,b]$ to be determined, consider the punctured domain
\begin{equation}\label{eqn:Omega'}
    \Omega':=B^n(0,\eps)\cap B^n(u,b)\backslash \{(0,0),(u,0)\}.
\end{equation}

We state and prove a more technical analogue of Proposition \ref{prop:C2_norm_bdd}.
\begin{prop}\label{prop:C2_norm_bdd_0}
    The function $f$ is twice differentiable on $\Omega'$ and obeys
    \begin{equation}\label{eqn:26-05-17}
        \| f\|_{C^1(\Omega')}+\|D^2 f\|_{L^\infty(\Omega')}\lesssim (|u|+\Delta)\|g\|_{\mathrm{reg}},
    \end{equation}
    where the implicit constant here (and in the proof below) depends only on $n$.
\end{prop}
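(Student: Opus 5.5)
We need to bound first and second derivatives of $f(x,y)=g(\bar a,\bar r)-g(a,r)$ on $\Omega'$, where now $r$ or $\bar r$ may be near $0$. The plan is to treat $F_a(X):=g(a,|X|)$ as a function on $B^{n-1}(0,b)$. Because of the vanishing slope $g_t(a,0)=0$, $F_a$ is $C^{1}$ with Lipschitz gradient on the whole ball, not merely away from the origin. We then write $f(X)=F_{\bar a}(X-(u,0))-F_a(X)$ and split
\[
f(X)=\big[F_{\bar a}(X-(u,0))-F_a(X-(u,0))\big]+\big[F_a(X-(u,0))-F_a(X)\big].
\]
The first bracket should contribute $O(\Delta)$ to $C^1$ and $D^2$, and the second $O(|u|)$. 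Away from $\{0,(u,0)\}$ everything is smooth, so twice differentiability on $\Omega'$ is immediate from the chain rule, since $|X|$ and $|X-(u,0)|$ are smooth there.

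\textbf{Key formulas.} For $X\ne0$ with $t=|X|$ and $\omega=X/|X|$,
\[
DF_a(X)=g_t(a,t)\,\omega,\qquad D^2F_a(X)=g_{tt}(a,t)\,\omega\otimes\omega+\frac{g_t(a,t)}{t}\,(\mathrm{Id}-\omega\otimes\omega).
\]
Since $g_t(a,0)=0$, we have $|g_t(a,t)|/t\le\|g_t\|_{\Lip}$. Hence $|D^2F_a|\lesssim\|g\|_{\mathrm{reg}}$, so $DF_a$ is globally Lipschitz on the ball with constant $\lesssim\|g\|_{\mathrm{reg}}$. This handles the translation bracket: its gradient is $DF_a(X-(u,0))-DF_a(X)$, which is $O(|u|)$. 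The $C^0$ part follows similarly (via $\|g_t\|_\infty$), so its $C^1$ norm is $\lesssim|u|$.

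\textbf{The $a$-difference bracket.} Set $h(t)=g(\bar a,t)-g(a,t)$. Then $h_t(0)=0$ and $\|h_t\|_{\Lip}\le\Delta\cdot$ (a mixed Lipschitz bound), which requires $|g_{tt}(\bar a,t)-g_{tt}(a,t)|\lesssim\Delta$. This holds because $g_{tt}$ is Lipschitz in $a$, part of $\|g\|_{\mathrm{reg}}$. Also $|h_t(t)|/t\le\sup|h_{tt}|\lesssim\Delta$. Applying the same formulas with $h$ in place of $g(a,\cdot)$ gives $C^1$ and $D^2$ bounds of order $\Delta$, with $|h|,|h_t|\le\Delta\|g_t\|_{\Lip}$ for the $C^1$ part.

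\textbf{Main obstacle: $D^2$ of the translation bracket.} We need $|D^2F_a(X-(u,0))-D^2F_a(X)|\lesssim|u|$ in $L^\infty$, but $D^2F_a$ is discontinuous at $0$ through the angular term $\omega\otimes\omega$, so a plain Lipschitz estimate fails near the two singular points. We resolve this in two regimes.
\begin{itemize}
\item When $|u|\gtrsim\eps$, i.e.\ $|u|\sim1$ relative to $b$, the trivial bound $|D^2f|\le|D^2F_{\bar a}|+|D^2F_a|\lesssim1\lesssim|u|+\Delta$ suffices.
\item When $|u|$ is small, we must still check uniform boundedness by $|u|$. Near $X=0$ or $X=(u,0)$ this is false in general: $D^2F_a$ jumps by $(g_{tt}(a,0)-g_t(a,t)/t)$ times a rank-one difference, which is only $o(1)$.
\end{itemize}
Since the statement requires an $L^\infty$ bound $\lesssim|u|+\Delta$, we instead write $g_{tt}(a,t)\,\omega\otimes\omega+\frac{g_t}{t}(\mathrm{Id}-\omega\otimes\omega)=\frac{g_t}{t}\mathrm{Id}+\big(g_{tt}-\frac{g_t}{t}\big)\omega\otimes\omega$. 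The coefficient $g_{tt}(a,t)-g_t(a,t)/t$ is $O(t)$ by Lipschitz continuity of $g_{tt}$, since $g_t(a,t)/t$ is an average of $g_{tt}$ over $[0,t]$. Hence the angular term is $O(|X|)$ and Lipschitz away from zero. The plan is then a case split:
\begin{itemize}
\item If $|X|\le2|u|$ or $|X-(u,0)|\le2|u|$, both angular terms are $O(|u|)$ directly.
\item Otherwise, a mean value estimate along the segment from $X$ to $X-(u,0)$, which avoids the origin, bounds their difference by $|u|\cdot\sup|D(\text{angular term})|$. This supremum is $\lesssim1$, since $D$ of $(g_{tt}-g_t/t)$ is $O(1)$ while $D\omega=O(1/|X|)$ multiplies an $O(|X|)$ coefficient.
\end{itemize}
The isotropic term $\frac{g_t}{t}\mathrm{Id}$ is Lipschitz in $X$ by the same averaging argument. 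Combining the two regimes yields the bound \eqref{eqn:26-05-17}.
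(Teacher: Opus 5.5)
Your argument is correct, but it is organised differently from the paper's. The paper works entrywise in coordinates. It expands $f_{xx}$ as in \eqref{eqn:26-08-04} and compares the $(\bar a,\bar r)$- and $(a,r)$-terms directly, using the auxiliary function $g_t(a,t)/t$ from \eqref{eqn:defn_h} (bounded, and Lipschitz in $t$ and in $a$). It gets the $|u|$-gain from the singular factor $|y|^2|\bar r^{-2}-r^{-2}|$ (for $r\le\bar r$ this is at most $2|u|/\bar r$), paired with the cancellation $|g_t(a,r)/r-g_{tt}(a,r)|\lesssim r$ of \eqref{eqn:26-08-04b}. The other entries are declared similar. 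You use the same two facts, but through the invariant form $D^2F_a=\frac{g_t}{t}\mathrm{Id}+\big(g_{tt}-\frac{g_t}{t}\big)\,\omega\otimes\omega$ and a splitting of $f$ into an $a$-difference at the shifted point plus a pure translate of $F_a$. This buys a cleaner structural statement: the radial extension $F_a$ has a Lipschitz Hessian on the whole ball, with constant $\lesssim\|g\|_{\mathrm{reg}}$, which is exactly what $g_t(a,0)=0$ and $g_{tt}\in\Lip$ give. So the $O(|u|)$ bound for the translation part is immediate, all Hessian entries are handled at once, and no ordering $r\le\bar r$ is needed. The paper's coordinate formulas, in turn, are the ones reused in Lemma \ref{lem:hessian_nonzero_near_0}.

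A few things to tidy up.

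Your ``main obstacle'' discussion misdescribes the situation. $D^2F_a$ is not discontinuous at $0$: the angular coefficient $c(t):=g_{tt}(a,t)-g_t(a,t)/t$ is $O(t)$, so $D^2F_a(X)\to g_{tt}(a,0)\,\mathrm{Id}$ as $X\to 0$, and your own argument shows it is even Lipschitz. Consequently the regime $|u|\gtrsim\eps$ is superfluous, and it would make the constant depend on $\eps$. The case split can also be replaced by a single bound, valid for all $P,Q\ne 0$ with $\omega_P=P/|P|$:
\[
\big|c(|P|)\,\omega_P\otimes\omega_P-c(|Q|)\,\omega_Q\otimes\omega_Q\big|\le|c(|P|)-c(|Q|)|+2|c(|Q|)|\,|\omega_P-\omega_Q|\lesssim\|g\|_{\mathrm{reg}}|P-Q|.
\]
This uses $|\omega_P-\omega_Q|\le2|P-Q|/|Q|$ and $|c(t)|\le\frac12\|g_{tt}\|_{\Lip}\,t$.

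Since only $g_{tt}\in\Lip$ is assumed, your ``$D(g_{tt}-g_t/t)=O(1)$'' should be read as a Lipschitz bound for $c$, which is all that is needed.

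Finally, your bound $|h|\le\Delta\|g_t\|_{\Lip}$ for $h(t)=g(\bar a,t)-g(a,t)$ requires $g(\bar a,0)=g(a,0)$. The quantity $\|g\|_{\mathrm{reg}}$ does not control $g$ itself, so the sup-norm part of $\|f\|_{C^1}$ only holds after normalising $g(\cdot,0)\equiv0$. This is harmless, since the constant $g(\bar a,0)-g(a,0)$ can be absorbed into $v$, and the statement as written carries the same tacit caveat.
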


Before we present the proof of Proposition \ref{prop:C2_norm_bdd_0}, we introduce an auxiliary function $h(a,t)$ as follows:
\begin{equation}\label{eqn:defn_h}
    h(a,t):=\begin{cases}
         \displaystyle \frac{g_t(a,t)}{t},\quad &t>0,\\
         g_{tt}(a,0),\quad &t=0.
    \end{cases}
\end{equation}
We prove some regularity properties of $h$. Since $g_t(a,0)=0$, $h$ is continuous and satisfies $\|h\|_\infty\lesssim \|g\|_{\mathrm{reg}}$. Moreover, for $t>0$, we have
\begin{equation}\label{eqn:26-09-04}
    \begin{aligned}
        h_t(a,t)&=\frac{g_{tt}(a,t)-h(a,t)}{t}=\frac{tg_{tt}(a,t)-g_t(a,t)}{t^2}.
    \end{aligned}
\end{equation}
Using $g_t(a,0)=0$, for $t>0$ we have 
\begin{equation}\label{eqn:ht_bound}
    \begin{aligned}
        |h_t(a,t)|\lesssim 
        %\|g_{tt}\|_{\Lip_t}\le 
        \|g\|_{\mathrm{reg}}.
    \end{aligned}
\end{equation}
Also, for $t>0$, by the fundamental theorem of calculus, we have
\begin{align*}
    &\left|\frac{g_t(\bar a,t)-g_t(a,t)}{t}\right|
    =\left|\frac{\int_0^t [g_{tt}(\bar a,s)-g_{tt}(a,s)]ds }{t}\right|
    \le %\|g_{tt}\|_{\Lip_a}\Delta.
    \|g\|_{\mathrm{reg}}\Delta.
\end{align*}
For $t=0$, we also have $|h(\bar a,0)-h(a,0)|\le 
\|g_{tt}\|_{\Lip_a}|\bar a-a|$, 
where $$\|g\|_{\Lip_a}:=\sup_{t\in I}\sup_{\bar a\ne  a\in A}\frac{|g(\bar a,t)-g(a, t)|}{|\bar a-a|}.$$
This gives
\begin{equation}\label{eqn:ha_bound}
    \|h\|_{\Lip_a}\le \|g\|_{\mathrm{reg}}.
\end{equation}
Similarly, using the fundamental theorem of calculus, for $t>0$ we have 
\begin{align*}
    &\left|h_t(\bar a,t)-h_t(a,t)\right|\\
    =&\frac{\left|(tg_{tt}(\bar a,t)-g_t(\bar a,t))-(tg_{tt}( a,t)-g_t( a,t))\right|}{t^2}\\
    =&\frac{\left|tg_{tt}(\bar a,t)-tg_{tt}( a,t)-\int_0^t [g_{tt}(\bar a,s)-g_{tt}( a,s)]ds\right|}{t^2}\\
    =&\frac{\left|\int_0^t [(g_{tt}(\bar a,t)-g_{tt}( a,t))-(g_{tt}(\bar a,s)-g_{tt}( a,s))]ds\right|}{t^2}\\
    \le & \frac{\int_0^t M\Delta(t-s)ds}{t^2}\\
    \sim &M\Delta,
\end{align*}
using the extra regularity assumption \eqref{eqn:extra_regularity}. This means that
\begin{equation}\label{eqn:26-09-04'}
    \|h_t\|_{\Lip_a}=\|t^{-1}(g_{tt}-h)\|_{\Lip_a}\le \|g\|_{\mathrm{reg}}.
\end{equation}

\begin{proof}[Proof of Proposition \ref{prop:C2_norm_bdd_0}]
We just give the argument for $f_{xx}$ as the proof for other first and second derivatives are easier or similar. We also assume $r\le \bar r$ as the argument for the other case is symmetric. 
The first term on the right hand side of \eqref{eqn:26-08-04} is easy to bound. For the third term, using $|y|\le \bar r$, it suffices to prove the bound with $g_{tt}(\bar a,\bar r)$ replaced by $g_{tt}(a,r)$. But then $|y|^2|\bar r^{-2}-r^{-2}|\le |y|^2 r^{-2}\le 1 $, which settles the third term.

Now we consider the second term. Using the auxiliary function $h$, we can rewrite
\begin{align*}
    &g_t(\bar a,\bar r)\bar r^{-3}-g_t(a,r)r^{-3}
    =h(\bar a,\bar r)\bar r^{-2}-h(a,r)r^{-2}.
\end{align*}
By \eqref{eqn:ht_bound} and \eqref{eqn:ha_bound}, we can also replace $h(\bar a,\bar r)$ by $h(a,r)$. Our task then becomes proving
\begin{equation}\label{eqn:26-08-04a}
    |y|^2 \left|\left[h(a,r)-g_{tt}(a,r)\right](\bar r^{-2}-r^{-2})\right|\lesssim (|u|+\Delta) \|g\|_{\mathrm{reg}}.
\end{equation}
But we have
\begin{equation}\label{eqn:26-08-04b}
    |h(a,r)-g_{tt}(a,r)|\lesssim 
    %\|g_{tt}\|_{\Lip_t}\,r\le 
    \|g\|_{\mathrm{reg}}\,r,
\end{equation}
and $|y|^2 |\bar r^{-2}-r^{-2}|\le |y|^2 r^{-2}\le 1$. This proves \eqref{eqn:26-08-04a} and thus the boundedness of $f_{xx}$.

\end{proof}

The analogue of Theorem \ref{thm:Cordoba_size} in the near-axis case is as follows. It immediately leads to our goal, namely, Theorem \ref{thm:anti-compression_near_0}.
\begin{thm}[Intersection bound near the rotation axis]\label{thm:Cordoba_size_near_0}
Let $n\ge 3$. Assume that $g$ is nondegenerate near the origin in the sense of Definition \ref{defn:nondegenerate_near_0}. Then there exists a small constant $\eps\in (0,b]$, $\eps\sim 1$ such that the level set
\begin{equation}
    E_\delta:=\{(x,y)\in \Omega':|f(x,y)-v|<\delta\}
\end{equation}
satisfies the estimate $|E_\delta|\lesssim \Delta^{-1}\delta$.  
\end{thm}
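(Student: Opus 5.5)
We follow the same scheme as in the proof of Theorem \ref{thm:Cordoba_size}. Proposition \ref{prop:C2_norm_bdd_0} gives $\|f\|_{C^1(\Omega')}+\|D^2f\|_{L^\infty(\Omega')}\lesssim |u|+\Delta$. So a continuity dichotomy as in Lemma \ref{lem:partition_O(1)} lets us cover $\Omega'$ by $O_\sigma(1)$ cubes $Q$ of diameter $c_\sigma\sim\sigma$. On each cube, every first partial of $f$ is either uniformly $\ge \sigma(|u|+\Delta)/2$ or uniformly $<\sigma(|u|+\Delta)$. In the large-gradient case, the gradient lower bound (Proposition \ref{prop:gradient_lower_bound}) gives $|E_\delta\cap Q|\lesssim_\sigma (|u|+\Delta)^{-1}\delta\le\Delta^{-1}\delta$.

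One point needs care: the second derivatives are only bounded, not Lipschitz, near the punctures. The dichotomy only needs Lipschitz control of $Df$, so bounded $D^2f$ on the (non-convex) punctured set suffices. The punctures have measure zero and $f$ is $C^1$ across them except possibly at the axis points, so we can work on convex pieces as in Remark \ref{rmk:Omega}.

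The main work is the near-tangent case, where $|Df|<\sigma(|u|+\Delta)$ on $Q$. Here the goal is to show that the Hessian is uniformly nondegenerate of size $\gtrsim\Delta$, and in fact definite, so that the elliptic case of Theorem \ref{thm:sublevel_set_measure} (after rescaling as in the earlier proof) gives $|E_\delta\cap Q|\lesssim\Delta^{-1}\delta$.

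\textbf{Step 1: reduce to $|u|\lesssim\Delta$.} Rewrite $Df$ using the function $h$ of \eqref{eqn:defn_h}:
\[
f_x=h(\bar a,\bar r)(x-u)-h(a,r)x,\qquad f_y=\big(h(\bar a,\bar r)-h(a,r)\big)y .
\]
Since $r,\bar r\le\eps$, the regularity of $h$ gives $h(\bar a,\bar r)=g_{tt}(a,0)+O(\Delta+\eps)$. Then $f_x=-u\,h(\bar a,\bar r)+O\big((\Delta+\eps)|x|\big)$ and $|h|\gtrsim \inf|g_{tt}|$. Taking $\eps,\sigma$ small then forces $|u|\lesssim \sigma\Delta+\eps|x|$-type bounds, which yield $|u|\lesssim \Delta$. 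This is the analogue of Lemma \ref{lem:u_tangent}.

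\textbf{Step 2: the Hessian.} From the representation above,
\[
D^2f=\big(h(\bar a,\bar r)-h(a,r)\big)I+(\text{terms } h_t(\cdot)\,\tfrac{(\cdot)(\cdot)^T}{r}).
\]
The first term is $\approx (g_{tt}(\bar a,0)-g_{tt}(a,0))I$, of size $\ge\lambda_2\Delta$ by \eqref{eqn:defn_L2} with a fixed sign. It remains to show that all other contributions are $\ll\Delta$ once $\eps$ is small. The differences $h(\bar a,\bar r)-h(\bar a,0)$ and $h(a,r)-h(a,0)$ are not small relative to $\Delta$ individually, but their difference is. Using \eqref{eqn:26-09-04'} and \eqref{eqn:extra_regularity}, $h(\bar a,t)-h(a,t)$ varies in $t$ by at most $L\Delta t\lesssim\eps\Delta$. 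The mismatch $h(a,\bar r)-h(a,r)$ is $O(|\bar r-r|)=O(|u|)$. The $h_t$ terms behave the same way: their $a$-difference is $O(\Delta)\cdot O(\eps)$-controlled, and the $r$ versus $\bar r$ mismatch costs $O(|u|)$. Hence we need $|u|\ll\Delta$, not just $|u|\lesssim\Delta$. This is the main obstacle, and it is where Step 1 must be made quantitative: the near-tangency at the scale $\sigma$ should give $|u|\lesssim(\sigma+\eps)\Delta$, since $f_x\approx -u\,g_{tt}(a,0)$.

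\textbf{Step 3: conclude.} With both errors $\ll\Delta$, $D^2f$ is definite with all eigenvalues $\sim\Delta$ on $Q\cap\Omega'$, and $Q$ misses at most two points. The elliptic sublevel set bound from Theorem \ref{thm:sublevel_set_measure} then applies in every dimension $n\ge3$. Summing over the $O(1)$ cubes gives $|E_\delta|\lesssim\Delta^{-1}\delta$.
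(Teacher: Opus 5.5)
Your strategy can be made to work, but Step 1 does not prove what Step 2 needs, and you flag it yourself as the main obstacle. There are two problems.

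\begin{itemize}
\item On $\Omega'$ you only have $r<\eps$. When $|u|$ is large, $\bar r$ can be close to $b$, so the claim $h(\bar a,\bar r)=g_{tt}(a,0)+O(\Delta+\eps)$ is not available.
\item More importantly, you compare both $h(\bar a,\bar r)$ and $h(a,r)$ with a value at $t=0$. This leaves an error $O((\Delta+\eps)|x|)$ in $f_x$, and its piece $\eps|x|\le\eps^2$ is not proportional to $\Delta$. The near-tangent condition then only yields $|u|\lesssim(\sigma+\eps)\Delta+\eps^2$.
\end{itemize}

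Since $\eps\sim1$ is fixed while $\Delta$ may be arbitrarily small, this gives neither $|u|\lesssim\Delta$ nor the $|u|\ll\Delta$ that your Hessian error $O(|u|+\eps\Delta)$ requires. The heuristic ``$f_x\approx-u\,g_{tt}(a,0)$'' does not close this.

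The repair is to compare at equal radii instead of at $t=0$. By \eqref{eqn:ha_bound}, \eqref{eqn:ht_bound} and \eqref{eqn:triangle}, $|h(\bar a,\bar r)-h(a,r)|\le|h(\bar a,\bar r)-h(\bar a,r)|+|h(\bar a,r)-h(a,r)|\lesssim|u|+\Delta$. Write $f_x=-u\,h(\bar a,\bar r)+(h(\bar a,\bar r)-h(a,r))x$ and use $|x|\le r<\eps$. Also $|h|\ge\inf|g_{tt}|$, because $h(a,t)$ is an average of $g_{tt}(a,\cdot)$, which has one sign. Together these give $|f_x|\ge|u|\inf|g_{tt}|-C\eps(|u|+\Delta)$. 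Then $|f_x|<\sigma(|u|+\Delta)$ forces $|u|\lesssim(\sigma+\eps)\Delta$, and Step 2 goes through. The ``$r$ versus $\bar r$'' estimate for the rank-one terms also needs $X\mapsto h_t(\bar a,|X|)XX^T/|X|$ to be Lipschitz. This holds because $|h_t|\lesssim1$ and $th_t=g_{tt}-h$ is Lipschitz in $t$ by \eqref{eqn:26-09-04}.

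With this fix, your proof is a reorganisation of the paper's. The paper uses no cubes in the near-axis case; it splits globally by the size of $|u|$:
\begin{itemize}
\item For $|u|\le\eps\Delta$, Lemma \ref{lem:hessian_nonzero_near_0} shows entrywise, by comparing with the expression at $\bar r=r$, that $D^2f$ is definite with eigenvalues $\sim\Delta$ on all of $\Omega'$.
\item For $|u|\gg\eps\Delta$, Proposition \ref{prop:u_large_0} uses the identity \eqref{eqn:26-05-08} to get $|Df|\gtrsim|u|$ on all of $\Omega'$.
\end{itemize}

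Your corrected Step 1 is essentially a shorter proof of Proposition \ref{prop:u_large_0} using $f_x$ alone. Your identity $D^2f=(h(\bar a,\bar r)-h(a,r))I+h_t(\bar a,\bar r)\bar X\bar X^T/\bar r-h_t(a,r)XX^T/r$, with $X=(x,y)$ and $\bar X=X-(u,0)$, handles the diagonal and off-diagonal entries at once; the paper partly leaves the off-diagonal entries to the reader. On the other hand, a single point of small gradient already forces $|u|\ll\Delta$, and hence the Hessian bound on all of $\Omega'$. So your cube decomposition is superfluous, and the paper's global split by $|u|/\Delta$ is the more economical way to organise the same two estimates.
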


The rest of this section is devoted to the proof of Theorem \ref{thm:Cordoba_size_near_0}.    

%%%%%%%%%%%%%%%%%%%%

\subsection{Proportionally translated case}
Let $\eps\sim 1$ be a small enough scale to be chosen in the proof of Lemma \ref{lem:hessian_nonzero_near_0} right below. We first study the case $|u|\le \eps\Delta$. By Proposition \ref{prop:C2_norm_bdd_0}, we have $\|f\|_{C^1(\Omega')}+\|D^2 f\|_\infty\lesssim \Delta$. Also, note that $\min\{r,\bar r\}\le \eps$. Without loss of generality, we assume $r\le \bar r$, so $r\le \eps$.

The remaining argument is much easier than its counterpart in {\S}\ref{sec:sublevel_measure}; indeed, we will prove the following simpler analogue of Lemma \ref{lem:hessian_nonzero}.

\begin{lem}\label{lem:hessian_nonzero_near_0}
If $\eps\in (0,b]$ is small enough, and if $u\lesssim \eps\Delta$, then for $(x,y)\in \Omega'$, the Hessian matrix $D^2 f(x,y)$ satisfies that $|f_{y_iy_i}|\sim \Delta$, $|f_{xx}|\sim \Delta$ and $|f_{xy_i}|+|f_{y_iy_j}|\ll \Delta$, for $1\le i\ne j\le n-2$. Moreover, $f_{y_iy_i}$ and $f_{xx}$, $1\le i\le n-2$ all have the same sign.
\end{lem}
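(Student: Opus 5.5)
We will compute the Hessian entries directly and rewrite them through the auxiliary function $h$ from \eqref{eqn:defn_h}, so that the singularities at $r=0$ and $\bar r=0$ never appear. Differentiating \eqref{eqn:f_x_f_y} and using $g_t(a,t)=t\,h(a,t)$, we get, for the $r$-part, $\partial_{x_j}\partial_{x_k}\, g(a,r)=h(a,r)\delta_{jk}+(g_{tt}(a,r)-h(a,r))\frac{x_jx_k}{r^2}$, where $(x_1,\dots,x_{n-1})=(x,y)$. An analogous formula holds for the $\bar r$-part with $x_1$ replaced by $x-u$. Hence
\begin{equation*}
f_{x_jx_k}=\big[h(\bar a,\bar r)-h(a,r)\big]\delta_{jk}+\big(g_{tt}(\bar a,\bar r)-h(\bar a,\bar r)\big)\frac{\bar x_j\bar x_k}{\bar r^2}-\big(g_{tt}(a,r)-h(a,r)\big)\frac{x_jx_k}{r^2}.
\end{equation*}
By \eqref{eqn:26-08-04b}, $|g_{tt}(a,r)-h(a,r)|\lesssim r\le\eps$, and the unit-vector factors are bounded by $1$. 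Therefore the last two terms are $O(\eps)$ in absolute size.

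This crude $O(\eps)$ is too weak, since we need errors $\ll\Delta$ and $\Delta$ may be small. So we will estimate these correction terms relative to $\Delta$, splitting into the regimes $\bar r\lesssim \eps$ and the general case. Since $r\le\eps$ and $|u|\le\eps\Delta\le\eps|A|$, we have $\bar r\le r+|u|\lesssim \eps$ as well, so both radii are small.

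The key point is that $g_{tt}-h$ vanishes at $t=0$. By \eqref{eqn:26-09-04} and \eqref{eqn:26-09-04'}, $(g_{tt}-h)(a,t)=t\,h_t(a,t)$ with $\|h_t\|_{\Lip_a}\lesssim1$ and $|h_t|\lesssim 1$. We compare the $\bar a$-term with the $a$-term. For the parameter change we use
\begin{equation*}
|(g_{tt}-h)(\bar a,\bar r)-(g_{tt}-h)(a,\bar r)|\le \bar r\,\|h_t\|_{\Lip_a}\Delta\lesssim\eps\Delta.
\end{equation*}
This leaves $(g_{tt}-h)(a,\bar r)\frac{\bar x_j\bar x_k}{\bar r^2}-(g_{tt}-h)(a,r)\frac{x_jx_k}{r^2}$. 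This is where the main obstacle lies: the angular factors are not Lipschitz near the axis, and $|u|$ is only $\le\eps\Delta$.

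To handle it, we use $\frac{x_jx_k}{r^2}(g_{tt}-h)(a,r)=x_jx_k\,\frac{h_t(a,r)}{r}$ and write $\phi(X)=(g_{tt}-h)(a,|X|)\frac{X\otimes X}{|X|^2}$. We then need $|\phi(X-ue_1)-\phi(X)|\lesssim |u|$. This requires $D\phi$ to be bounded, which reduces to $|\partial_t[(g_{tt}-h)(a,t)]|\lesssim 1$ together with $|(g_{tt}-h)(a,t)|/t\lesssim1$. The second bound holds by \eqref{eqn:26-08-04b}. The first is plausible from \eqref{eqn:extra_regularity} and the Lipschitz bound on $g_{tt}$, as $h_t$ is an average of $g_{ttt}$-type quantities. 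If a full $C^1$ bound fails, we will instead use that $\phi$ is Lipschitz on segments avoiding the origin, and treat the segment through the origin separately using $|\phi(X)|\lesssim|X|$. Since $|X|,|X-ue_1|\lesssim |u|$ on such segments near the origin, this still gives $O(|u|)$. Either way, all correction terms are $O(\eps\Delta+|u|)=O(\eps\Delta)$.

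Finally, the main diagonal term satisfies
\begin{equation*}
h(\bar a,\bar r)-h(a,r)=\big[h(\bar a,r)-h(a,r)\big]+O(|\bar r-r|).
\end{equation*}
The error here is $O(|u|)=O(\eps\Delta)$ by \eqref{eqn:ht_bound}. The bracket equals $\frac1r\int_0^r[g_{tt}(\bar a,s)-g_{tt}(a,s)]\,ds$, which at $r=0$ is interpreted via $h(\cdot,0)=g_{tt}(\cdot,0)$. By \eqref{eqn:defn_L2} and continuity, $g_{tt}(\bar a,s)-g_{tt}(a,s)$ has a fixed sign and absolute value $\ge\lambda_2\Delta$, so the bracket has absolute value $\ge\lambda_2\Delta$ with a sign independent of $r$.

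Choosing $\eps$ small relative to $\lambda_2$ and $\|g\|_{\mathrm{reg}}$, $L$, we conclude that every diagonal entry $f_{xx}$, $f_{y_iy_i}$ equals this common quantity up to $O(\eps\Delta)$. Hence they all have absolute value $\sim\Delta$ and the same sign, while the off-diagonal entries are $O(\eps\Delta)\ll\Delta$.
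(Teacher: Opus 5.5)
Your argument is correct and uses the same three ingredients as the paper:
\begin{itemize}
\item remove the translation by $u$ at cost $O(|u|)=O(\eps\Delta)$;
\item bound the $a$-variation of $g_{tt}-h$ by $O(r\Delta)$ via \eqref{eqn:26-09-04'};
\item take the main term from $\lambda_2$.
\end{itemize}
The organisation differs. The paper works entrywise with \eqref{eqn:26-08-04} and \eqref{eqn:fyiyi}. It replaces $\bar r$ by $r$ by citing Proposition \ref{prop:C2_norm_bdd_0}; applied to the pair $(\bar a,\bar a)$, i.e.\ with $\Delta=0$, that proposition is exactly the statement $f_{xx}-F=O(|u|)$. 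It then takes $g_{tt}(\bar a,r)-g_{tt}(a,r)$ as the main term and leaves the off-diagonal entries to the reader. You instead use the radial decomposition $D_X^2\, g(a,|X|)=h\,I+(g_{tt}-h)\,\hat X\otimes\hat X$. Your main term is then the average $h(\bar a,r)-h(a,r)=\frac1r\int_0^r[g_{tt}(\bar a,s)-g_{tt}(a,s)]\,ds$, all entries (diagonal and off-diagonal) are handled at once, and the translation step is proved directly from the Lipschitz bound on $\phi$ rather than quoted. The paper's route is shorter; yours is more self-contained.

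Two small points. First, the step you call ``plausible'' is immediate. Since $g_{tt}$ is Lipschitz in $t$ (part of $\|g\|_{\mathrm{reg}}$) and $|h_t|\lesssim 1$ by \eqref{eqn:ht_bound}, $\partial_t(g_{tt}-h)=g_{ttt}-h_t$ is bounded a.e. So \eqref{eqn:extra_regularity} is not needed there; it enters only through \eqref{eqn:26-09-04'}. Second, you should record the upper bound $|h(\bar a,r)-h(a,r)|\lesssim\Delta$ from \eqref{eqn:ha_bound}. This gives the ``$\lesssim$'' half of $|f_{xx}|,|f_{y_iy_i}|\sim\Delta$, which your write-up uses but never states.
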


\begin{proof}[Proof of Lemma \ref{lem:hessian_nonzero_near_0}]
        We first consider the term $f_{xx}$. Denote by $F$ the expression of $f_{xx}$, with $\bar r$ replaced by $r$. By Proposition \ref{prop:C2_norm_bdd_0}, we have $|F-f_{xx}|\lesssim u\le \eps \Delta$. Thus, if $\eps$ is small enough, and if we can show that $|F|\gtrsim \Delta$, then $|f_{xx}|\gtrsim \Delta$; also, $F$ and $f_{xx}$ will have the same sign.

        Now we prove that $|F|\gtrsim \Delta$. By \eqref{eqn:26-08-04} and \eqref{eqn:defn_h}, we have 
        \begin{align*}
            F&=g_{tt}(\bar a,r)-g_{tt}(a,r)+|y|^2r^{-2}[(h(\bar a,r)-h(a,r))-(g_{tt}(\bar a,r)-g_{tt}(a,r))]\\
            &=g_{tt}(\bar a,r)-g_{tt}(a,r)+|y|^2r^{-2}[(h(\bar a,r)-g_{tt}(\bar a,r))-(h(a,r)-g_{tt}(a,r))].
        \end{align*}
        By \eqref{eqn:26-09-04'} and the definition of $\lambda_2$, we have
        \begin{equation*}
            |F|\ge \lambda_2\Delta-O(|u|+r\Delta)\sim \Delta-O(\eps \Delta)\sim \Delta,
        \end{equation*}
        if $\eps$ is small enough. This settles the lower bound of $f_{xx}$.

        Now we consider $f_{y_iy_i}$. Similarly, using \eqref{eqn:fyiyi} and taking $\bar r=r$, we have
        \begin{align*}
            f_{y_iy_i}=&\left(g_{tt}(\bar a,r)-g_{tt}(a,r)\right)y_i^2r^{-2}\\
            &+\left(g_{t}(\bar a,r)-g_{t}(a,r)\right)(r^2-y_i^2))r^{-3}+O(\eps \Delta)\\
            =&g_{tt}(\bar a,r)-g_{tt}(a,r)+O(\eps \Delta)\\
            &+(r^2-y_i^2) r^{-2}[(h(\bar a,r)-g_{tt}(\bar a,r))-(h(a,r)-g_{tt}(a,r))]\\
            =&g_{tt}(\bar a,r)-g_{tt}(a,r)+O(\eps \Delta),
        \end{align*}
        where we have used \eqref{eqn:26-09-04'} again. This shows that $|f_{y_iy_i}|\gtrsim \Delta$, and that it has the same sign as $f_{xx}$.
        The argument that the off-diagonal terms are $\ll \Delta$ is also similar, so we leave it out.
\end{proof}

%%%%%%%%%%%%%%%%%%%%

\subsection{Overly translated case}
With $\eps\sim 1$ already chosen in Lemma \ref{lem:hessian_nonzero_near_0}, it remains to study the case $u\gg \eps\Delta$. 
\begin{prop}\label{prop:u_large_0}
If $(x,y)\in \Omega'$ is such that $|u|\gg \Delta r$, then $|Df(x,y)|\gtrsim |u|$. In particular, if $|u|\gg \eps\Delta$, then $u\gg \Delta r$ always holds, so $|E_\delta|\lesssim_\eps |u|^{-1}\delta\lesssim \Delta^{-1}\delta$.
\end{prop}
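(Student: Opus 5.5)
The plan is to express the gradient of $f$ through the auxiliary function $h$ from \eqref{eqn:defn_h}. This makes the contribution of the translation $u$ appear as a main term with a nonvanishing coefficient, and everything else becomes an error controlled by $\Delta r$ and $\eps|u|$.

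Write $X=(x,y)\in\R^{n-1}$ and $U=(u,0)$, so that $r=|X|$ and $\bar r=|X-U|$. By \eqref{eqn:f_x_f_y}, and since $g_t(a,t)/t=h(a,t)$ for $t>0$, we have on $\Omega'$
\begin{equation*}
Df(X)=h(\bar a,\bar r)(X-U)-h(a,r)X=-h(\bar a,\bar r)\,U+\big(h(\bar a,\bar r)-h(a,r)\big)X .
\end{equation*}
The proof then consists of three estimates, one for each term and one for their combination.

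\textbf{Main term.} We bound the coefficient of $U$ from below. Since $g_t(a,0)=0$, we have $h(a,t)=\frac1t\int_0^t g_{tt}(a,s)\,ds$ for $t>0$, and $h(a,0)=g_{tt}(a,0)$. Because $g_{tt}$ is continuous and nonvanishing on $A\times I$, it has constant sign. Hence $|h(\bar a,\bar r)|\ge \inf|g_{tt}|\gtrsim 1$, and the first term has norm $\gtrsim |u|$.

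\textbf{Error term.} By \eqref{eqn:ha_bound} and \eqref{eqn:ht_bound}, together with \eqref{eqn:triangle},
\begin{equation*}
|h(\bar a,\bar r)-h(a,r)|\le |h(\bar a,\bar r)-h(a,\bar r)|+|h(a,\bar r)-h(a,r)|\lesssim \Delta+|\bar r-r|\le \Delta+|u|.
\end{equation*}
Since $\Omega'\subset B^n(0,\eps)$, we have $|X|=r\le\eps$. Therefore the second term has norm at most $C(\Delta r+\eps|u|)$.

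\textbf{Combining.} Putting the two bounds together gives
\begin{equation*}
|Df(x,y)|\ge c|u|-C\Delta r-C\eps|u|.
\end{equation*}
If $\eps$ is small, then $C\eps|u|\le c|u|/4$. The hypothesis $|u|\gg\Delta r$ (with a large enough implicit constant) gives $C\Delta r\le c|u|/4$. Hence $|Df|\gtrsim|u|$. One should check that shrinking $\eps$ here is compatible with the choice in Lemma \ref{lem:hessian_nonzero_near_0}; it is, since both only require $\eps$ sufficiently small depending on the admissible constants.

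For the ``in particular'' part: since $r\le\eps$, the assumption $|u|\gg\eps\Delta$ implies $|u|\gg \Delta r$ on all of $\Omega'$, so $|Df|\gtrsim|u|$ there. To turn the gradient lower bound into the measure bound, I would use $\|D^2 f\|_{L^\infty(\Omega')}\lesssim |u|+\Delta\lesssim_\eps |u|$ from Proposition \ref{prop:C2_norm_bdd_0}. Then I would partition $\Omega'$ into $O_\eps(1)$ cubes on which some fixed partial derivative $\partial^\gamma f$ has size $\gtrsim|u|$ and constant sign, exactly as in Lemma \ref{lem:partition_O(1)}. On each such cube, the one-dimensional sublevel estimate (Proposition \ref{prop:gradient_lower_bound}, or Fubini in the direction $\gamma$) gives $|E_\delta\cap Q|\lesssim \delta/|u|$. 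The two punctures $(0,0)$ and $(u,0)$ have measure zero and $f$ is $C^1$ on $\Omega'$, so they cause no difficulty. Finally, $|u|\gtrsim\eps\Delta$ with $\eps\sim1$ yields $|E_\delta|\lesssim_\eps|u|^{-1}\delta\lesssim\Delta^{-1}\delta$.

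The main obstacle is the error term. A naive bound gives $|h(\bar a,\bar r)-h(a,r)|\lesssim\Delta+|u|$, and the $|u|$ part is not small compared with the main term. The fix is that this difference is multiplied by $|X|=r\le\eps$, which is exactly why the domain must be restricted to $B^n(0,\eps)$ with $\eps\sim1$ small.
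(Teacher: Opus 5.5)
Your proof is correct, but its key step differs from the paper's. The paper gets $|Df|\gtrsim|u|$ from the exact identity \eqref{eqn:26-05-08}. That identity writes $|Df|^2$ as the sum of two nonnegative terms, $(g_t(\bar a,\bar r)-g_t(a,r))^2$ and $h(\bar a,\bar r)h(a,r)\,[u^2-(\bar r-r)^2]$. The first is $\gtrsim(|\bar r-r|-O(\Delta r))_+^2$, and the coefficient of the second is $\gtrsim 1$. So if $|\bar r-r|\le|u|/2$ the second term is $\gtrsim u^2$; otherwise $|\bar r-r|\ge|u|/2\gg\Delta r$ and the first term is $\gtrsim u^2$. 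That argument works at every point with $r,\bar r\in(0,b]$ and needs no smallness of $r$. The restriction $r<\eps$ enters only in passing from $|u|\gg\eps\Delta$ to $|u|\gg\Delta r$.

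Your linear decomposition $Df=-h(\bar a,\bar r)U+(h(\bar a,\bar r)-h(a,r))X$ avoids the Apollonius identity and the case split. It uses only $|h|\ge\inf|g_{tt}|$ and the Lipschitz bounds \eqref{eqn:ht_bound}, \eqref{eqn:ha_bound}. The price is the error term of size $\lesssim|u|\,r$, coming from $|\bar r-r|\le|u|$. You can absorb it only because $r<\eps$ with $\eps\ll\inf|g_{tt}|/\|g\|_{\mathrm{reg}}$, so your pointwise bound genuinely uses the localisation to the small ball. As you note, this extra smallness is harmless: it does not depend on the threshold constant $K$ in $|u|\ge K\eps\Delta$. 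One can therefore fix $K$ first and then take $\eps$ small enough for both this proposition and Lemma \ref{lem:hessian_nonzero_near_0}.

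Your handling of the ``in particular'' part agrees with the paper's. It combines the gradient lower bound with $\|D^2f\|_\infty\lesssim|u|+\Delta\lesssim_\eps|u|$ from Proposition \ref{prop:C2_norm_bdd_0}, then applies Proposition \ref{prop:gradient_lower_bound}. Your Fubini-on-cubes variant is a fine alternative and deals with the two punctures explicitly.
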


\begin{proof}
    Recall \eqref{eqn:26-05-08}:
    \begin{equation*}
        |Df(x,y)|^2=(g_t(\bar a,\bar r)-g_t(a,r))^2+\frac{g_t(\bar a,\bar r)g_t(a,r)}{r\bar r}\left[u^2-(\bar r-r)^2\right].
    \end{equation*}
    Using $|g_{tt}|\sim 1$ and $\|g_{t}\|_{\Lip_a}\lesssim \bar r$, we have
    \begin{equation}\label{eqn:26-09-06}
        |g_t(\bar a,\bar r)-g_t(a,r)|\gtrsim \bar r-r-O(\Delta r).
    \end{equation}
    Meanwhile, using $|g_t(a,r)|\sim r$, we have 
    \begin{equation}\label{eqn:26-09-06'}
        \frac{g_t(\bar a,\bar r)g_t(a,r)}{r\bar r}\left[u^2-(\bar r-r)^2\right]\gtrsim u^2-(\bar r-r)^2.
    \end{equation}  
    Thus, combining the lower bounds, we obtain
    \begin{equation*}
        |Df(x,y)|\gtrsim |u|-O(\Delta r)\sim |u|,
    \end{equation*}
    using the assumption that $|u|\gg \Delta r$. Thus, $|E_\delta|\lesssim_\eps |u|^{-1}\delta$, using Propositions \ref{prop:gradient_lower_bound} and \ref{prop:C2_norm_bdd_0}. 
\end{proof}

%%%%%%%%%%%%%%%%%%%%%%%%%%%%%%%%%%%%%%%%

\section{Anti-compression for the cones}\label{sec:cone}
In this section, we prove Theorem \ref{thm:light_cone_compression}. In fact, we prove the same result for with ${\conenotation}_a(\delta)$ replaced by vertical neighbourhoods denoted by
\begin{equation*}
    {\conenotation}_a[\delta]:=\left\{(\vectornotation{x},{\verticalvariable})\in \R^{n-1}\times \R:\big||{\verticalvariable}|-a|\vectornotation{x}|\big|\le \delta\right\}.
\end{equation*}
We will actually prove the following analogue of \eqref{eqn:intersection_bound_cone}: for $a,\bar a\in (0,\infty)$ and any unit ball $B\sub \R^n$, we have
    \begin{equation}\label{eqn:intersection_bound_cone_vertical}
        \sup_{\tau\in \R^n}\big|({\conenotation}_a[\delta]+\tau)\cap {\conenotation}_{\bar a}[\delta]\cap B\big|\le C_n\frac{\delta^2}{\delta+|\bar a-a|},\quad \forall  \delta\in (0,1).
    \end{equation}
Note that this bound works even for $a,\bar a$ unbounded. When $a,\bar a\in (0,1]$, the bounds \eqref{eqn:intersection_bound_cone_vertical} and \eqref{eqn:intersection_bound_cone} are equivalent up to a dimensional constant, which finishes the proof of Theorem \ref{thm:light_cone_compression}.

To prove \eqref{eqn:intersection_bound_cone_vertical}, we invoke a special case of a result from \cite{Ding2018}[Equation (4)]:
\begin{prop}\label{prop:ding}
    Let $\Omega\sub \R^2$ be an open connected subset, and let $f:\Omega\to \R$ be a $C^2$ function such that $|Df|$ is bounded below by $\Delta>0$. Then for $v\in \R$ and $\delta>0$, we have
    \begin{equation}
        |f^{-1}([v,v+\delta])|\le \Delta^{-1}\int_v^{v+\delta} \mathcal H^1(f^{-1}(t))dt,
    \end{equation}
    where $\mathcal H^1$ denotes the one-dimensional Hausdorff measure.
\end{prop}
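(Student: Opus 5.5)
The plan is to apply the coarea formula. Write $E_{v,\delta}:=f^{-1}([v,v+\delta])\cap\Omega$. Since $f$ is $C^2$, the coarea formula gives, for every measurable $E\subseteq\Omega$,
\begin{equation*}
\int_E |Df(z)|\,dz=\int_{\R}\mathcal H^1\big(E\cap f^{-1}(t)\big)\,dt .
\end{equation*}
We take $E=E_{v,\delta}$. On the left we use the hypothesis $|Df|\ge\Delta$ on $\Omega$, which gives $\int_{E}|Df|\ge \Delta|E|$. On the right, $E\cap f^{-1}(t)$ is empty unless $t\in[v,v+\delta]$, and for such $t$ it is contained in $f^{-1}(t)$. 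So the right-hand side is at most $\int_v^{v+\delta}\mathcal H^1(f^{-1}(t))\,dt$. Dividing by $\Delta$ gives the claim.

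The one point to check is that the coarea formula applies in this setting. It holds for locally Lipschitz $f$ on open subsets of $\R^2$, and $C^2$ functions are locally Lipschitz. The integrals involved may be infinite, but everything is nonnegative, so monotone convergence makes the manipulation legitimate, and there is nothing to prove if the right-hand side is infinite. If one prefers to avoid infinite quantities, exhaust $\Omega$ by compactly contained open sets $\Omega_k$, apply the argument on each $\Omega_k$, where $f$ is genuinely Lipschitz, and let $k\to\infty$.

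Connectedness of $\Omega$ plays no role in this argument. I expect it is included only to match the setting of \cite{Ding2018}.

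There is no real obstacle here. The only subtlety is measurability of $t\mapsto\mathcal H^1(f^{-1}(t))$, and this is part of the standard statement of the coarea formula. The substantive work will come later, when the proposition is used: one needs to bound $\mathcal H^1(f^{-1}(t)\cap B)$ by $O(1)$ for the cone difference function, using that its level sets are algebraic curves of bounded degree.
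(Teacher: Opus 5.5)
Your coarea-formula argument is correct and complete, and you are right that neither connectedness of $\Omega$ nor $C^2$ regularity is needed; local Lipschitz regularity of $f$ suffices. The paper gives no proof of its own and simply cites \cite{Ding2018}. Applying the coarea formula to $f^{-1}([v,v+\delta])$ together with $|Df|\ge\Delta$, as you do, is the standard argument behind such a statement, so there is no substantive difference in route.
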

We return to our settings and make a few reductions. First, by the Pappus Centroid Theorem, it suffices to prove the case $n=3$. Second, by symmetry, it suffices to prove that
\begin{equation}\label{eqn:cone_pm_intersection}
\begin{aligned}
    &\sup_{\tau\in \R^n}\big|({\conenotation}^+_a(\delta)+\tau)\cap {\conenotation}^+_{\bar a}(\delta)\cap B\big|\le C_n\frac{\delta^2}{\delta+|\bar a-a|},\quad \forall  \delta\in (0,1),\\
    &\sup_{\tau\in \R^n}\big|({\conenotation}^+_a(\delta)+\tau)\cap {\conenotation}^-_{\bar a}(\delta)\cap B\big|\le C_n\frac{\delta^2}{\delta+|\bar a-a|},\quad \forall  \delta\in (0,1),
\end{aligned}
\end{equation}
where ${\conenotation}^+_a:={\conenotation}_a\cap \{z\ge 0\}$ and ${\conenotation}^-_a:={\conenotation}_a\cap \{z\le 0\}$. 

To deal with the former inequality of \eqref{eqn:cone_pm_intersection}, we can reduce the problem to proving that
\begin{equation*}
    |f^{-1}([v,v+\delta])\cap B^2(0,10)|\lesssim \delta \Delta^{-1},\quad \forall v\in \R
\end{equation*}
where we take $f(x,y):=\bar a \bar r-ar$ (recall \eqref{eqn:bar_r}). But by \eqref{eqn:26-05-08} and \eqref{eqn:triangle}, we see that 
\begin{equation*}
    |Df|^2=\Delta^2+\frac {a \bar a}{r\bar r}[u^2-(\bar r-r)^2]\ge \Delta^2,
\end{equation*}
except possibly at $(0,0)$ and $(0,u)$. Thus, using Proposition \ref{prop:ding} and $\Omega=B^2(0,10)\backslash \{(0,0),(u,0)\}$, it suffices to prove that
\begin{equation*}
    \mathcal H^1(f^{-1}(v)\cap B^2(0,10))\lesssim 1.
\end{equation*}
However, this follows from the Crofton formula, since $f^{-1}(v)$ is an algebraic curve of degree at most $4$ (and it does not even contain a line segment since $\bar a\ne a$).

It remains to prove the latter inequality of \eqref{eqn:cone_pm_intersection}. We consider the function $f(x,y):=\bar a\bar r+ar$ instead. Similarly, by \eqref{eqn:26-05-08} and \eqref{eqn:triangle}, we see that 
\begin{equation*}
    |Df|^2=\Delta^2+\frac {a \bar a}{r\bar r}[(r+\bar r)^2-u^2]\ge \Delta^2,
\end{equation*}
except possibly at $(0,0)$ and $(0,u)$. The remaining argument is the same as the former case.

%%%%%%%%%%%%%%%%%%%%%%%%%%%%%%%%%%%%%%%%

\section{Compressing surfaces of revolution}\label{sec:compress}
\subsection{Compression by vertical translation} We begin with the formal definition of a piecewise constant function on a uniform partition of the unit interval.

\begin{defn}
\label{def:simple-1}
Let $N$ be a positive integer. A function $v:[0,1) \rightarrow \mathbb{R}$ is said to be $\frac1N$-simple if $v$ is constant on each interval $\left[\frac{i}{N}, \frac{i+1}{N}\right)$, $i=0,1, \cdots, N-1$.
\end{defn}

In what follows, $N$ will always take the form $N=2^M$, where $M$ is a positive integer. 

\smallskip 

The following proposition provides an extension of the classical Besicovitch construction (\textit{cf.} \cite{Schoenberg2}) from families of straight lines to general families of plane curves defined by a function $f(a,x)$. The key assumption is a Lipschitz-type condition on the mixed differences of $f$; for sufficiently smooth $f$, this condition is equivalent to requiring that the mixed partial derivative $f_{ax}$ be bounded in absolute value by 1. 

\begin{prop}
\label{prop:compress}
Suppose $f:[0,1) \times [0,1) \rightarrow \mathbb{R}$ is a continuous function that satisfies
\begin{equation}
\label{eq:compress-1}
\big|[f(b, y)-f(a, y)]-[f(b, x)-f(a, x)]\big| \le |b-a||y-x|
\end{equation}
for all $a, b, x, y \in[0,1)$. 
%(For example, this holds if $\|f_{ax}\|_\infty\le 1$.) 
Then for every positive integer $M$, there exists a $2^{-M}$-simple function $v:[0,1) \rightarrow \mathbb{R}$,
such that
\begin{equation}
\label{eq:compress-2}
\Big|\bigcup_{a \in[0,1)}\big\{(x, y): y=f(a, x)-f(a, 0)+v(a),\, 0 \le  x<1\big\}\Big| 
\le  \frac{8}{M}.
\end{equation}
Moreover, the function $v$ can be chosen to satisfy
\begin{equation}
\label{eq:bound-v}
|v(a)|\le 1,\quad \forall a\in[0,1).
\end{equation}
\end{prop}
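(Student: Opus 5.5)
We will follow the Perron tree / Schoenberg construction, with vertical shifts playing the role of the horizontal translates used for lines. Write $N=2^M$ and $I_i=[i/N,(i+1)/N)$.

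\textbf{Step 1: reduction to a piecewise linear model.} For $a\in I_i$, set $F_a(x)=f(a,x)-f(a,0)$. By \eqref{eq:compress-1} with $y=0$,
\begin{equation*}
|F_a(x)-F_{i/N}(x)|\le |a-i/N|\,x\le 1/N .
\end{equation*}
So the union over $a\in I_i$ of the shifted graphs lies within vertical distance $1/N$ of the single curve $y=F_{i/N}(x)+v_i$. We cannot treat the curves as straight lines, so we build the set directly using $x$-slices. For each $x$, look at the vertical slice of the union. It is contained in
\begin{equation*}
\bigcup_i \big[F_{i/N}(x)+v_i-1/N,\;F_{i/N}(x)+v_i+1/N\big].
\end{equation*}
We want the $v_i$ chosen so that, after integrating the total length of these slices in $x$, the result is $O(1/M)$.

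\textbf{Step 2: recursive Schoenberg choice of shifts.} Write each $i$ in binary, $i=\sum_{k=1}^M \epsilon_k 2^{M-k}$. Choose
\begin{equation*}
v_i=-\sum_{k=1}^M \epsilon_k\,\big[F_{c_k}(x_k)-F_{c_k'}(x_k)\big],
\end{equation*}
that is, at level $k$ we apply a vertical shift that makes the two halves of a dyadic block of $a$-values agree at a pivot point $x_k=k/M$. Here $c_k$ and $c_k'$ denote the relevant dyadic endpoints. This is the analogue of the Perron construction in which dyadic sibling triangles are translated to overlap maximally at height $k/M$.

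The mixed-difference condition \eqref{eq:compress-1} is exactly what controls everything. Two curves whose parameters differ by $2^{-k}$ have vertical difference that varies in $x$ at rate at most $2^{-k}$. Consequently, after the level-$k$ shift their vertical gap near $x_k$ is at most $2^{-k}|x-x_k|$. The bound \eqref{eq:bound-v}, $|v|\le 1$, follows by summing the level contributions, which are bounded by $2^{-k}x_k\le 2^{-k}$.

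\textbf{Step 3: area count.} This is the main obstacle. Fix the strip $x\in[(k-1)/M,k/M)$. At level $j$, the $2^{j}$ dyadic blocks of generation $j$ are bundled, and within a bundle the curves are spread vertically by at most about $2^{-j}\cdot|x-x_j|$. Summing over generations $j$, with the geometric series of the Perron tree, should give a slice length of $O(1/M+1/N)$ for every $x$. Integrating in $x$ over $[0,1)$ then gives the total area bound $8/M$, provided the constants are tracked carefully and using $1/N\le 1/M$.

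The delicate part is making the recursive telescoping rigorous for curves rather than lines. We will try to mimic the proof for lines verbatim, replacing each slope difference $(b-a)$ by the mixed difference of $f$ and using \eqref{eq:compress-1} as the only input. This yields an "$x$-slice sandwich" inequality, which we will then integrate over each strip.
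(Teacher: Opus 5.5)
Your construction is essentially the paper's, and your proof of $|v|\le 1$ is correct. Reading $c_\ell,c_\ell'$ as $a_\ell,a_{\ell-1}$ (the truncations of $a$ to $\ell$ and $\ell-1$ binary digits), your shift is $v(a)=-\sum_{\ell=1}^M[F_{a_\ell}(x_\ell)-F_{a_{\ell-1}}(x_\ell)]$. The only difference is that your pivots $x_\ell=\ell/M$ run in the reverse order to the paper's $x_j=(M-j)/M$. The gap is Step~3, which carries the entire content of the proposition and is only asserted.

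The heuristic you give for Step~3 is also not the right mechanism. The quantity $2^{-j}|x-x_j|$ is the residual offset between the two children of a generation-$(j-1)$ block, not the spread inside a generation-$j$ bundle. Nor does one sum over generations. What is needed is one fixed generation per strip, together with a telescoping bound over all finer levels. With your ordering, for $x\in[(k-1)/M,k/M)$ and every $a\in[0,1)$,
\begin{align*}
&F_a(x)+v(a)-F_{a_{k-1}}(x)-v(a_{k-1})\\
&\quad=\big[F_a(x)-F_{a_M}(x)\big]+\sum_{\ell=k}^{M}\Big\{\big[F_{a_\ell}(x)-F_{a_{\ell-1}}(x)\big]-\big[F_{a_\ell}(x_\ell)-F_{a_{\ell-1}}(x_\ell)\big]\Big\}.
\end{align*}
Every finer pivot lies to the right of the strip, with $0<x_\ell-x\le(\ell-k+1)/M$. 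So \eqref{eq:compress-1} bounds the right-hand side by $2^{-M}x+\sum_{\ell\ge k}2^{-\ell}\frac{\ell-k+1}{M}\le 2^{-M}+\frac{4\cdot 2^{-k}}{M}$. Hence the slice at $x$ is covered by $2^{k-1}$ intervals, one for each value of $a_{k-1}$, each of length $2^{1-M}+\frac{8\cdot 2^{-k}}{M}$. So the slice has length at most $\frac4M+2^{k-M}$. This is the analogue of the paper's \eqref{eq:compress-5}, and it is the step you must actually write down.

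Your claim that every slice has length $O(1/M+1/N)$ is false for your ordering. The within-block discrepancy $|F_a(x)-F_{a_M}(x)|\le 2^{-M}x$ is pinned at $x=0$ by the normalisation $F_a(0)=0$. Your finest pivot, however, sits at $x=1$, so the two do not cancel near $x=1$. Indeed, for $f(a,x)=ax$ the slice near $x=1$ has length of order $(\log M)/M$.

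The area estimate survives only after integrating in $x$: $\frac1M\sum_{k=1}^M\big(\frac4M+2^{k-M}\big)<\frac6M\le\frac8M$. The paper's ordering avoids this by putting the finest pivot at $x_M=0$, where every $\tilde f(a,\cdot)$ vanishes. The within-block term then merges into the telescoping sum, and the per-slice bound $8/M$ holds uniformly in $x$.

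A minor point: your pivot $x_M=1$ lies outside $[0,1)$. The differences $F_{a_M}-F_{a_{M-1}}$ extend continuously to $x=1$ by \eqref{eq:compress-1}, or you can simply use $x_\ell=(\ell-1)/M$, for which the same bound holds.
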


\begin{proof}
The proof is similar to that of Sawyer \cite[Theorem A]{Sawyer1987}. For $a \in[0,1)$, write (uniquely)
\begin{equation}
\label{eq:compress-3}
a=\frac{\varepsilon_1}{2}+\frac{\varepsilon_2}{2^2}+\cdots+\frac{\varepsilon_M}{2^M}+\eta, 
\end{equation}
where each $\varepsilon_j \in\{0,1\}$ and $\eta \in [0,2^{-M})$; 
denote $a_0=0$ and
\begin{equation}
\label{eq:def-aj}
a_j=\frac{\varepsilon_1}{2}+\frac{\varepsilon_2}{2^2}+\cdots+\frac{\varepsilon_j}{2^j}, \quad j=1,2, \cdots, M .
\end{equation}
To define the desired $2^{-M}$-simple function $v(a)$, we write
$$
\tilde{f}(a, x)=f(a, x)-f(a, 0)
$$
(note that $\tilde{f}(a, x)$ also satisfies \eqref{eq:compress-1}), 
$$
x_j=\frac{M-j}{M},
$$
and set
\begin{equation}
\label{eq:def-v}
v(a)=-\sum_{j=1}^M [\tilde{f}(a_j, x_j)-\tilde{f}(a_{j-1}, x_j)] .
\end{equation}
It follows immediately from \eqref{eq:compress-1} that \eqref{eq:bound-v} holds. 
We also record that for $j=1,\cdots,M$,
\begin{equation*}
    v(a_j)=-\sum_{k=1}^j [\tilde{f}(a_k, x_k)-\tilde{f}(a_{k-1}, x_k)] .
\end{equation*}
Now denote 
\begin{equation}
\label{eq:def-tilde-f-v}
\tilde{f}_v(a, x)=\tilde{f}(a, x)+v(a).
\end{equation} 
To show \eqref{eq:compress-2}, by Tonelli's theorem, it suffices to show 
\begin{equation}
\label{eq:compress-4}
\big|\tilde{f}_v([0,1), x)\big| \le  \frac{8}{M}, \quad \forall x \in[0,1).
\end{equation}
For each $x\in [0,1)$, there is a unique $j=1,2,\cdots,M$ such that $x\in \left[x_j, x_{j-1}\right)$. So the desired inequality \eqref{eq:compress-4} follows immediately if we can show 
\begin{equation}
\label{eq:compress-5}
\big|\tilde{f}_v(a, x)-\tilde{f}_v(a_j, x)\big| \le  \frac{4}{M} \cdot 2^{-j},\quad \forall a \in[0,1).
\end{equation}

To show \eqref{eq:compress-5}, we first consider the case $x=x_j$. By definition,
\begin{equation}
\label{eq:compress-6}
\tilde{f}_v(a, x_j)-\tilde{f}_v(a_j, x_j)=[\tilde{f}(a, x_j)-\tilde{f}(a_j, x_j)]+\left[v(a)-v(a_j)\right]. 
\end{equation}
The first term on the right-hand side of \eqref{eq:compress-6} can be written as a telescoping sum:
\begin{equation}\label{eq:compress-7}
    \begin{aligned}
        &\tilde{f}(a, x_j)-\tilde{f}(a_j, x_j)\\
        = & \tilde{f}(a, x_j)-\tilde{f}(a_{M-1}, x_j) 
 +\sum_{k=j}^{M-2}[\tilde{f}(a_{k+1}, x_j)-\tilde{f}(a_k, x_j)].
    \end{aligned}
\end{equation}
On the other hand, by definition, 
\begin{align}
v(a)-v(a_j)
=&-\sum_{k=j}^{M-1}[\tilde{f}(a_{k+1}, x_{k+1})-\tilde{f}(a_k, x_{k+1})] \notag\\
=&-[\tilde{f}(a, x_M)-\tilde{f}(a_{M-1}, x_M)] \notag\\
&\quad +\sum_{k=j}^{M-2}[\tilde{f}(a_{k+1}, x_{k+1})-\tilde{f}(a_k, x_{k+1})], \label{eq:compress-8}
\end{align}
where we have used $\tilde{f}(a, x_M)=\tilde{f}(a_M, x_M)=0$ in the last equality (recall $x_M=0$). 
Combining \eqref{eq:compress-6}, \eqref{eq:compress-7}, and \eqref{eq:compress-8}, we see that 
\begin{align}
&\tilde{f}_v(a, x_j)-\tilde{f}_v(a_j, x_j)\notag\\
= & {[\tilde{f}(a, x_j)-\tilde{f}(a_{M-1}, x_j)]-[\tilde{f}(a, x_M)-\tilde{f}(a_{M-1}, x_M)] } \notag\\
& +\sum_{k=j}^{M-2}\Big\{[\tilde{f}(a_{k+1}, x_j)-\tilde{f}(a_k, x_j)]-[\tilde{f}(a_{k+1}, x_{k+1})-\tilde{f}(a_k, x_{k+1})]\Big\}. 
\label{eq:compress-9}
\end{align}
Apply \eqref{eq:compress-1} (with $f$ replaced by $\tilde{f}$) to the right-hand side of \eqref{eq:compress-9}. We obtain  
$$
\begin{aligned}
|\eqref{eq:compress-9}| & \le \left(a-a_{M-1}\right)\left(x_j-x_M\right)+\sum_{k=j}^{M-2}\left(a_{k+1}-a_k\right)\left(x_j-x_{k+1}\right) \\
& \le  \frac{1}{2^{M-1}}\frac{M-j}{M}+\sum_{k=j}^{M-2} \frac{1}{2^{k+1}}\frac{k-j+1}{M} \\
& =\frac{1}{M} \cdot 2^{-j}\Big(2\cdot\frac{M-j}{2^{M-j}}+\sum_{\ell=1}^{M-j-1} \frac{\ell}{2^\ell}\Big) \\
& \le  \frac{3}{M} \cdot 2^{-j},  
\end{aligned}
$$
that is, 
\begin{equation}
\label{eq:compress-10}
\big|\tilde{f}_v(a, x_j)-\tilde{f}_v(a_j, x_j)\big| \le  \frac{3}{M} \cdot 2^{-j}, \quad j=1,2, \cdots, M. 
\end{equation}

Now we show that \eqref{eq:compress-5} holds for general $x \in\left[x_j, x_{j-1}\right)$. 
Indeed, it follows from \eqref{eq:compress-10} and \eqref{eq:compress-1} (with $f$ replaced by $\tilde{f}_v$) that 
$$
\begin{aligned}
&\big|\tilde{f}_v(a, x)-\tilde{f}_v(a_j, x)\big| \\
\le  & \big|\tilde{f}_v(a, x_j)-\tilde{f}_v(a_j, x_j)\big| \\
&\quad +\big|[\tilde{f}_v(a, x)-\tilde{f}_v(a_j, x)] -[\tilde{f}_v(a, x_j)-\tilde{f}_v(a_j, x_j)] \big| \\
\le  & \frac{3}{M} \cdot 2^{-j}+|x-x_j| \cdot 2^{-j} \\
\le  & \frac{4}{M} \cdot 2^{-j}. 
\end{aligned}
$$
This completes the proof of Proposition \ref{prop:compress}.
\end{proof}

\begin{rmk}
\label{rmk:1}
The proof of Proposition \ref{prop:compress} relies crucially on \eqref{eq:compress-5}. If we assume in addition that for some constant $L_0>0$, 
\begin{equation}
\label{eq:rmk-36}
|f(b, 0)-f(a, 0)| \le L_0|b-a|, \quad \forall a, b \in[0,1), 
\end{equation}
then the following variant of \eqref{eq:compress-2} holds:
\begin{align}
\Big|\bigcup_{a \in[0,1)}\big\{(x, y): y=f(a, x)-f(a_M, 0)+v(a_M),\,&\frac{\log _2 M}{M} \le x<1\big\} \Big|\notag\\
& \le \frac{8+2 L_0}{M},\label{eq:rmk-37}
\end{align}
where $M=2^m\,(m=1,2,\cdots)$ is any power of 2.

Indeed, if we denote
$$
{\widetilde{F}}_v(a, x)=f(a, x)-f(a_M,0)+v(a_M),
$$
then by the definition of $\tilde{f}_v$ and \eqref{eq:rmk-36}, 
\begin{align}
\big|\widetilde{F}_v(a, x)-\tilde{f}_v(a, x)\big| & =|f(a_M, 0)-f(a, 0)| \notag\\
& \le L_0\left|a_M-a\right| \notag\\
& \le L_0 \cdot 2^{-M}, \qquad\qquad\quad \forall a, x \in[0,1) . \label{eq:rmk-38}
\end{align}
Combining \eqref{eq:rmk-38} and \eqref{eq:compress-5}, we see that 
\begin{equation}
\label{eq:rmk-39}
\big|\widetilde{F}_v(a, x)-\tilde{f}_v(a_j, x)\big| \le \frac{4}{M} 2^{-j}+L_0 \cdot 2^{-M}
\end{equation}
holds for all $a \in[0,1)$ and $x \in\left[x_{j},x_{j-1}\right)\,(j=1,2, \cdots, M)$. Similar to the proof of Proposition \ref{prop:compress}, it follows from \eqref{eq:rmk-39} that
$$
\big|\widetilde{F}_v([0,1), x)\big| \le \frac{8}{M}+2 L_0 \cdot 2^{-M} \cdot 2^j, \quad \forall x \in\left[x_j, x_{j-1}\right) .
$$
Thus, for $j\le M-\log _2M$, 
$$
\big|\widetilde{F}_v([0,1), x)\big| \le \frac{8}{M}+\frac{2 L_0}{M}, \quad \forall x \in\left[x_j, x_{j-1}\right) .
$$
After integrating in $x$, we obtain
\begin{align*}
\Big| \bigcup_{a \in[0,1)}\big\{(x, y): y=\widetilde{F}_v(a, x),\, \frac{\log _2 M}{M} \le x<1\big\} \Big| 
&\le \frac{8+2 L_0}{M}\left(1-\frac{\log _2 M}{M}\right) \\
&\le \frac{8+2 L_0}{M}, 
\end{align*}
which shows \eqref{eq:rmk-37}.
\end{rmk}

The next result shows that the $O(1/M)$ bound from Proposition \ref{prop:compress} is, in a sense, optimal.

\begin{prop}
\label{prop:anticomp}
Suppose (i) $f:[0,1) \times[0,1) \rightarrow \mathbb{R}$ is a continuous function that satisfies
\begin{equation}
\label{eq:anticomp-10}
\big|[f(b, y)-f(a, y)]-[f(b, x)-f(a, x)]\big| \ge |b-a||y-x|
\end{equation}
for all $a, b, x, y \in[0,1)$; (ii) for some constant $L>0$, 
\begin{equation}
\label{eq:anticomp-11}
|f(b, x)-f(a, x)| \le L|b-a|, \quad \forall a, b, x \in[0,1). 
\end{equation}
Then there exists a constant $c>0$ depending only on $L$, such that for any $2^{-M}$-simple function $v:[0,1) \rightarrow \mathbb{R}$, it holds that 
\begin{equation}
\label{eq:anticomp-12}
\Big|\bigcup_{a \in[0,1)}\big\{(x, y): y=f(a, x)+v(a),\, 0 \le  x<1\big\}\Big| \ge  \frac{c}{M}.
\end{equation}
\end{prop}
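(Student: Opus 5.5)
The plan is a C\'ordoba-type $L^2$ argument at the scale $2^{-M}$ of the partition on which $v$ is constant. Write $N=2^M$, $I_i=[i/N,(i+1)/N)$, and let $v_i$ be the value of $v$ on $I_i$. Set
$$E_i:=\bigcup_{a\in I_i}\{(x,y): y=f(a,x)+v_i,\ 0\le x<1\},\qquad E=\bigcup_i E_i .$$
By Cauchy--Schwarz applied to $\sum_i 1_{E_i}$, which is supported in $E$, we get
$$|E|\ \ge\ \frac{\big(\sum_i |E_i|\big)^2}{\sum_{i,k}|E_i\cap E_k|}.$$
So it suffices to show $\sum_i|E_i|\gtrsim 1$ and $\sum_{i,k}|E_i\cap E_k|\lesssim_L M$. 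Everything will be done fibrewise in $x$, using Tonelli's theorem.

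\emph{Lower bound for $|E_i|$.} Fix $x$. By continuity of $f$, the vertical fibre of $E_i$ over $x$ contains the interval between $f(a,x)+v_i$ and $f(b,x)+v_i$, where $a=i/N$ and $b$ is the right endpoint of $I_i$ (approached from inside). Its length is at least $|g_i(x)|$, where $g_i(x):=f(b,x)-f(a,x)$. Condition \eqref{eq:anticomp-10} gives $|g_i(y)-g_i(x)|\ge N^{-1}|y-x|$. Since $g_i$ is continuous, it is strictly monotone with difference quotients bounded below by $N^{-1}$. Hence $g_i$ has at most one zero $x_0$, and $|g_i(x)|\ge N^{-1}|x-x_0|$ (or $|g_i(x)|\ge N^{-1}\cdot\mathrm{dist}(x,\{0,1\})$-type bounds if there is no zero). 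Integrating over $x\in[0,1)$ gives $|E_i|\ge \int_0^1|g_i|\,dx\ge N^{-1}/4$, and therefore $\sum_i|E_i|\ge 1/4$.

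\emph{Upper bound for $|E_i\cap E_k|$.} By \eqref{eq:anticomp-11}, each fibre of $E_i$ lies in an interval of length $\le L/N$ centred near $f(i/N,x)+v_i$. So the fibres of $E_i$ and $E_k$ over $x$ can meet only if
$$|h_{ik}(x)|\le 2L/N,\qquad h_{ik}(x):=f(i/N,x)-f(k/N,x)+v_i-v_k,$$
and when they meet, the intersection has length $\le L/N$. By \eqref{eq:anticomp-10}, $h_{ik}$ is monotone with difference quotients $\ge |i-k|/N$. Hence
$$\big|\{x:|h_{ik}(x)|\le 2L/N\}\big|\le \frac{4L/N}{|i-k|/N}=\frac{4L}{|i-k|}\qquad (i\ne k).$$
This yields $|E_i\cap E_k|\le \frac{L}{N}\min\big(1,\frac{4L}{|i-k|}\big)$, including the diagonal case $i=k$. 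Summing, for each $i$ the sum over $k$ is $\lesssim \frac{L}{N}(1+L\log N)$. Over $N$ values of $i$ this gives $\sum_{i,k}|E_i\cap E_k|\lesssim L(1+L)M$.

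Combining the two bounds gives $|E|\ge c/M$ with $c\sim (L(1+L))^{-1}$. The main point needing care is the monotonicity and transversality step: deducing from the two-point inequality \eqref{eq:anticomp-10} and continuity that $g_i$ and $h_{ik}$ are monotone with the stated slope lower bounds. This is what converts the mixed-difference hypothesis into the sublevel-set bound $\lesssim L/|i-k|$, and the harmonic sum over $|i-k|\le N$ is exactly where the factor $M=\log_2 N$ appears. The other steps are routine measure-theoretic bookkeeping, including measurability of $E_i$, which holds since each $E_i$ is a continuous image of a $\sigma$-compact set.
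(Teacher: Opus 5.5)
Your proof is correct and follows the paper's route: C\'ordoba's Cauchy--Schwarz argument, combining $\sum_i|T_i|\gtrsim 1$ with the fibrewise bound $|T_i\cap T_j|\lesssim_L 2^{-M}/(|i-j|+1)$. As in the paper, that bound comes from \eqref{eq:anticomp-11} (each fibre has length $\le L2^{-M}$) together with \eqref{eq:anticomp-10} (the $x$-range of an intersection has diameter $\lesssim L/|i-j|$).

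The one variation is in the first ingredient. The paper uses translation invariance to reduce to $|G(f)|\ge 1/16$ via the global mixed difference with $a=0$, $b\to 1^-$. You instead apply \eqref{eq:anticomp-10} at scale $2^{-M}$ inside each $I_i$ to get $|E_i|\ge 2^{-M}/4$ directly, which is a rescaled version of the same observation.
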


\begin{proof}
For $i=0,1, \cdots, 2^M-1$, denote $v_i=v\left(\frac{i}{2^M}\right)$ and
$$
T_i=\bigcup_{a\in\left[\frac{i}{2^M},\frac{i+1}{2^M}\right)}\big\{(x, y): y=f(a, x)+v_i,\, 0 \le  x<1\big\}.
$$
Then \eqref{eq:anticomp-12} can be rewritten as
\begin{equation}
\label{eq:anticomp-13}
\Big|\bigcup_{i=0}^{2^M-1} T_i\Big| \ge  \frac{c}{M}. 
\end{equation}
By a standard argument (\textit{cf.} C\'{o}rdoba \cite{CordobaKakeyalowerbound}), to prove \eqref{eq:anticomp-13}, it suffices to show that there exist constants $c_0, c_1>0$ depending only on $L$, such that 
\begin{align*}
&(i)\,\sum_{i=0}^{2^M-1}\left|T_i\right| \ge  c_0;\\
&(ii)\,\left|T_i \cap T_j\right| 
\le c_1\cdot\frac{2^{-M}}{|i-j|+1}, \quad \forall i, j \in\{0,1, \cdots, 2^M-1\}.\;\;\;\;\;\;
\end{align*}

To show ($i$), denote 
$$T_i^0=\bigcup_{a\in\left[\frac{i}{2^M},\frac{i+1}{2^M}\right)}\big\{(x, y): y=f(a, x),\, 0 \le  x<1\big\}.$$
Notice that, by translation, we have 
$$
\sum_{i=0}^{2^M-1}\left|T_i\right|=\sum_{i=0}^{2^M-1}\left|T_i^0\right| \ge \Big|\bigcup_{i=0}^{2^M-1} T_i^0\Big|=|G(f)|,
$$
where
$$
G(f):=\bigcup_{a\in[0,1)}\big\{(x, y): y=f(a, x),\, 0 \le  x<1\big\}. 
$$
Therefore, it suffices to show 
\begin{equation}
\label{eq:anticomp-14}
|G(f)| \ge  c_0 \text {. } 
\end{equation}
By Tonelli's theorem, we can write 
\begin{equation*}
|G(f)|=\int_0^1|f([0,1), x)| d x.
\end{equation*}
Since $f(a, x)$ is Lipschitz in $a$, \eqref{eq:anticomp-14} follows immediately with $c_0=\frac{1}{16}$ if we can show that 
$$
\left|f\left(1^{-}, x\right)-f(0, x)\right| \ge  \frac{1}{8}
$$
holds for $x$ in a set of length $\ge  \frac{1}{2}$. However, this follows easily from \eqref{eq:anticomp-10} with $a=0$ and $b \rightarrow 1^{-}$, by a simple contradiction argument.  

To show ($ii$), first notice that \eqref{eq:anticomp-11} implies 
\begin{equation}
\label{eq:anticomp-15}
\left|f\left(\left[\frac{i}{2^M}, \frac{i+1}{2^M}\right), x\right)\right| \le  L \cdot 2^{-M},\quad \forall x\in[0,1). 
\end{equation}
In particular, 
$$
\left|T_i\right|=\int_0^1\left|f\left(\left[\frac{i}{2^M}, \frac{i+1}{2^M}\right), x\right)\right| d x \le  L \cdot 2^{-M}. 
$$
Thus ($ii$) holds when $i=j$ and $c_1 \ge   L$.

Now we show that ($ii$) holds when $i<j$ and $c_1 \ge  8 L^2$.
In view of \eqref{eq:anticomp-15}, it suffices to show that for any $\left(x_0, y_0\right),\left(x_1, y_1\right) \in T_i \cap T_j$, we have
\begin{equation}
\label{eq:anticomp-16}
\left|x_1-x_0\right| \le  \frac{4 L}{|i-j|+1}. 
\end{equation}
To this end, write for $k=0,1$, 
\begin{equation}
\label{eq:anticomp-17}
y_k=f(a_k, x_k)+v_i=f(b_k, x_k)+v_j, 
\end{equation}
where $a_k \in\left[\frac{i}{2^M}, \frac{i+1}{2^M}\right)$ and $b_k \in\left[\frac{j}{2^M}, \frac{j+1}{2^M}\right)$.
By \eqref{eq:anticomp-11}, 
\begin{align}
& \Big|f\left(a_k, x_k\right)-f\Big(\frac{i}{2^M}, x_k\Big)\Big| \le  L \cdot 2^{-M},\label{eq:anticomp-18}\\
& \Big|f\left(b_k, x_k\right)-f\Big(\frac{j+1}{2^M}, x_k\Big)\Big| \le  L \cdot 2^{-M}.
\label{eq:anticomp-19}
\end{align}
Combining \eqref{eq:anticomp-17}, \eqref{eq:anticomp-18}, and \eqref{eq:anticomp-19}, we have 
\begin{equation}
\label{eq:anticomp-20}
\Big|f\Big(\frac{i}{2^M}, x_k\Big)-f\Big(\frac{j+1}{2^M}, x_k\Big)+v_i-v_j\Big| \le  2 L \cdot 2^{-M},\quad k=0, 1.
\end{equation}
Finally, apply \eqref{eq:anticomp-10} with $a=\frac{i}{2^M}, b=\frac{j+1}{2^M}, x=x_0$, and $y=x_1$. 
We obtain from \eqref{eq:anticomp-20} that
$$
\frac{|i-j|+1}{2^M}\left|x_1-x_0\right| \le  4 L \cdot 2^{-M},
$$
that is,
$$
\left|x_1-x_0\right| \le  \frac{4 L}{|i-j|+1}.
$$
This proves \eqref{eq:anticomp-16}, and the proof of Proposition \ref{prop:anticomp} is complete. 
\end{proof}

We now generalise Proposition \ref{prop:compress} to families of surfaces of revolution in $\mathbb R^{n+1}$. 

\begin{prop}
\label{prop:rotational}
Let \( n \geq 1 \), \( 0 \le r_0 < r_1 < \infty \), and \( 0 < L < \infty \).
Suppose \( f : [0,1) \times [r_0, r_1) \to \mathbb{R} \) is a continuous function that satisfies
\[
\big|[f(b,t) - f(a,t)] - [f(b,s) - f(a,s)]\big| \leq L |b-a| |t-s|
\]
for all \( a, b \in [0,1) \) and \( s, t \in [r_0, r_1) \).
Then for any positive integer \(M\), there exists a \( 2^{-M} \)-simple function
\( v : [0,1) \to \mathbb{R} \) such that
\begin{align}
\Big| \bigcup_{a \in [0,1)} \left\{ (\vectornotation{x}, y) : y = f(a, r) - f(a, r_0) + v(a), \ r_0 \le r < r_1 \right\} \Big|\notag\\
\le \frac{8 |B^n(0,1)|}{M} L \cdot (r_1 - r_0) \cdot (r_1^n - r_0^n),
\label{eq:rotcomp-1}
\end{align}
where $
\vectornotation{x} = (x_1, \cdots, x_n) \in \mathbb{R}^n$ and $r = |\vectornotation{x}| = \sqrt{x_1^2 + \cdots + x_n^2}.$
\end{prop}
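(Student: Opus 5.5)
The plan is to reduce Proposition \ref{prop:rotational} to the one-dimensional Proposition \ref{prop:compress} after rescaling, and then integrate in polar coordinates. For fixed $a$, the set in \eqref{eq:rotcomp-1} is a graph over the annulus $\{r_0\le |\vectornotation{x}|<r_1\}$. By Tonelli's theorem, its union over $a$ has measure
\[
\int_{r_0\le |\vectornotation{x}|<r_1}\big|\{f(a,|\vectornotation{x}|)-f(a,r_0)+v(a):a\in[0,1)\}\big|\,d\vectornotation{x}.
\]
Since the integrand depends only on $r=|\vectornotation{x}|$, this equals
\[
n|B^n(0,1)|\int_{r_0}^{r_1}\big|F_v([0,1),r)\big|\,r^{n-1}\,dr,
\]
where $F_v(a,r)=f(a,r)-f(a,r_0)+v(a)$. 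It therefore suffices to produce a $2^{-M}$-simple $v$ with
\[
\big|F_v([0,1),r)\big|\le \frac{8}{M}L(r_1-r_0)\quad\text{for every } r\in[r_0,r_1).
\]
Then the radial integral is at most $\frac{8}{M}L(r_1-r_0)\cdot\frac{r_1^n-r_0^n}{n}$, and multiplying by $n|B^n(0,1)|$ gives exactly the bound in \eqref{eq:rotcomp-1}.

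To obtain this uniform fibrewise bound, rescale. Put
\[
\phi(a,x):=\frac{f\big(a,r_0+x(r_1-r_0)\big)}{L(r_1-r_0)},\qquad x\in[0,1).
\]
Then
\[
\big|[\phi(b,y)-\phi(a,y)]-[\phi(b,x)-\phi(a,x)]\big|\le \frac{L|b-a|\,(r_1-r_0)|y-x|}{L(r_1-r_0)}=|b-a||y-x|,
\]
so $\phi$ satisfies \eqref{eq:compress-1}, and it is continuous. The proof of Proposition \ref{prop:compress} gives a $2^{-M}$-simple $w$ satisfying \eqref{eq:compress-4}, namely $|\tilde\phi_w([0,1),x)|\le 8/M$ for every $x$, where $\tilde\phi_w(a,x)=\phi(a,x)-\phi(a,0)+w(a)$. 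This pointwise-in-$x$ statement is what we need, not the integrated conclusion \eqref{eq:compress-2}, so we cite \eqref{eq:compress-4} directly from that proof.

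Now set $v:=L(r_1-r_0)\,w$, which is again $2^{-M}$-simple. Writing $r=r_0+x(r_1-r_0)$, we have $F_v(a,r)=L(r_1-r_0)\,\tilde\phi_w(a,x)$. Hence $|F_v([0,1),r)|\le L(r_1-r_0)\cdot 8/M$, which is the required fibre bound.

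The main point needing care is that we use the fibrewise estimate \eqref{eq:compress-4} rather than the area bound \eqref{eq:compress-2}. Without the uniform-in-$x$ bound, we could not insert the radial weight $r^{n-1}$ and integrate. The remaining details are routine: the degenerate case $r_0=0$ causes no problem since the origin has measure zero, and we note $n\ge1$ so the polar formula applies.
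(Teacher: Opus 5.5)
Your proposal is correct and matches the paper's proof essentially step for step. You rescale $f$ so that it satisfies \eqref{eq:compress-1}, take the fibrewise bound \eqref{eq:compress-4} from the proof of Proposition \ref{prop:compress}, scale $w$ back to $v$, and integrate over the annulus by Tonelli's theorem. The only cosmetic difference is that you pass through polar coordinates, whereas the paper multiplies the uniform fibre bound directly by $|B^n(0,r_1)\setminus B^n(0,r_0)|$; both give the same constant.
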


\begin{proof}
Apply the proof of Proposition \ref{prop:compress} (see \eqref{eq:compress-4}) to 
$$
\frac{1}{L \cdot\left(r_1-r_0\right)} f(a, r_0+\left(r_1-r_0\right) x), \quad x \in[0,1). 
$$
We can find a $2^{-M}$-simple function $v:[0,1) \rightarrow \mathbb{R}$ such that 
\begin{equation}
\label{eq:rotcomp-2}
\big|\tilde{f}_v([0,1), r)\big|\le \frac{8}{M} L \cdot\left(r_1-r_0\right), \quad \forall r \in\left[r_0, r_1\right),
\end{equation}
where
\begin{equation}
\label{eq:rot-comp-a}
\tilde{f}_v(a, r)=f(a, r)-f\left(a, r_0\right)+v(a) .
\end{equation}
Denote the union in \eqref{eq:rotcomp-1} by
$$
\begin{aligned}
G\big(\tilde{f}_v\big) & =\bigcup_{a\in[0,1)}\big\{(\vectornotation{x}, y): y=\tilde{f}_v(a, r),\, r_0\le r<r_1\big\}\sub \mathbb{R}^{n+1} .
\end{aligned}
$$
It follows from \eqref{eq:rotcomp-2} and Tonelli's theorem that
$$
\begin{aligned}
\big|G\big(\tilde{f}_v\big)\big| & =\int_{r_0\le r<r_1}\big|\tilde{f}_v([0,1), r)\big| d \vectornotation{x} \\
& \le  \frac{8}{M} L \cdot\left(r_1-r_0\right)\big|B^n\left(0, r_1\right)\backslash B^n\left(0, r_0\right) \big|\\
& = \frac{8 |B^n(0,1)|}{M} L \cdot (r_1 - r_0) \cdot (r_1^n - r_0^n),
\end{aligned}
$$
that is, \eqref{eq:rotcomp-1} holds. 
\end{proof}

\begin{rmk}
\label{rmk:cone-paraboloid}
Applying Proposition \ref{prop:rotational} with $f(a,r)=ar$ $(0\le a, r<1)$ gives the following result for the cones: 
for every integer \(M\ge1 \), there exists a \( 2^{-M} \)-simple function
\( v : [0,1) \to \mathbb{R} \) such that
\begin{align}
\Big| \bigcup_{a \in [0,1)} \left\{ (\vectornotation{x}, y) : y = ar + v(a), \ 0 \le r < 1 \right\} \Big|
\le \frac{8 |B^n(0,1)|}{M}.
\end{align}
Similarly, for the paraboloid \(f(a,r)=ar^2\) \((0\le a,r<1)\), one obtains the bound 
\(
\frac{16 |B^n(0,1)|}{M}.
\)
\end{rmk}

Under an additional Lipschitz condition, the argument of Remark \ref{rmk:1} yields the following variant of Proposition \ref{prop:rotational}, with the interval $[r_0, r_1)$ replaced by $(0,r_1]$, the shift $f(a,r_0)$ replaced by $f(a_M,r_1)$, and the outer radius multiplied by $\big(1-\frac{\log_2 M}{M}\big)$ (see \eqref{eq:rmk-40} below).

\begin{cor}
\label{cor:2}
Let $n \ge 1$, and let $r_1, L_0$, and $L$ be positive constants.
Suppose $f:[0,1) \times\left(0, r_1\right] \rightarrow \mathbb{R}$ is a continuous function that satisfies
\begin{align*}
&(i)\,\,\,\left|f(b, r_1)-f(a, r_1)\right| \le L_0|b-a|, \quad \forall a, b \in[0,1);\\
&(ii)\,\,\big|[f(b, t)-f(b, s)]-[f(a, t)-f(a, s)]\big|\le L| b-a| |t-s|\;\;\;\;\;\;\;\;\;\;\;\;\;\;\;\;\;\;\;\;\;\;\;
\end{align*}
for all $a, b\in[0,1)$ and $s, t\in(0, r_1]$. 
Then for any $M=2^m$ $(m=1,2, \cdots)$, there exists a $2^{-M}$-simple function $v:[0,1) \rightarrow \mathbb{R}$ such that
\begin{align}
\Big|\bigcup_{a \in[0,1)}\Big\{&(\vectornotation{x}, y) \in \mathbb{R}^{n +1}: y=f(a, r)-f(a_{M},r_1)+v(a_M), \notag\\
& 0<r \le\Big(1-\frac{\log _2 M}{M}\Big) r_1\Big\}\Big|\le\frac{8L r_1+2 L_0}{M}\left|B^n(0,r_1)\right|.\label{eq:rmk-40}
\end{align}
\end{cor}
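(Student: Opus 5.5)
The plan is to reduce Corollary \ref{cor:2} to the one-dimensional estimate behind Proposition \ref{prop:compress}, as modified in Remark \ref{rmk:1}, and then integrate over $\vectornotation{x}$ by Tonelli as in Proposition \ref{prop:rotational}.

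\emph{Reparametrisation.} Let $x\in[0,1)$ correspond to $r=r_1(1-x)\in(0,r_1]$, and set
\[
F(a,x):=\frac{f\big(a,r_1(1-x)\big)}{L r_1}.
\]
Then hypothesis (ii) is exactly \eqref{eq:compress-1} for $F$, since $|t-s|=r_1|x-y|$. Hypothesis (i) is \eqref{eq:rmk-36} at $x=0$ with constant $L_0/(Lr_1)$. This orientation puts the fixed base point $r_1$ at $x=0$, which is why the shift is $f(a_M,r_1)$.

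\emph{One-dimensional estimate.} Run the construction of Proposition \ref{prop:compress} for $F$ to get a $2^{-M}$-simple function $v_0$. Then apply the argument of Remark \ref{rmk:1}. For $x\in[x_j,x_{j-1})$ with $j\le M-\log_2 M$, the bound \eqref{eq:rmk-39} holds. Hence the fibre $\{F(a,x)-F(a_M,0)+v_0(a_M):a\in[0,1)\}$ has measure at most $\frac{8}{M}+\frac{2L_0}{Lr_1 M}$.

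The range $j\le M-\log_2 M$ corresponds to $x\ge \frac{\log_2 M}{M}$, which is exactly the radii $0<r\le(1-\frac{\log_2 M}{M})r_1$. Rescaling back by $Lr_1$, and setting $v(a):=Lr_1 v_0(a)$ (still $2^{-M}$-simple), the fibre over each such $r$ of the set in \eqref{eq:rmk-40} has one-dimensional measure at most $\frac{8Lr_1+2L_0}{M}$.

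\emph{Integration.} By Tonelli, integrate this fibre bound over $\vectornotation{x}$ with $0<|\vectornotation{x}|\le(1-\frac{\log_2 M}{M})r_1$, a region contained in $B^n(0,r_1)$. This gives \eqref{eq:rmk-40}.

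\emph{Expected obstacles.} The main step to check is the bookkeeping in Remark \ref{rmk:1} after the change of orientation. The subtraction of $F(a_M,0)$ replaces $F(a,0)$, and the error $L_0'|a-a_M|\le L_0'2^{-M}$ is controlled only after multiplying by the $2^j$ factor in the fibre count; this is what forces the cutoff $j\le M-\log_2 M$. One should also note that $f$ is defined only on $(0,r_1]$, which is harmless because the construction only evaluates $F$ at the nodes $x_j\in[0,1)$.
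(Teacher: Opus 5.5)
Your proposal is correct and is essentially the paper's argument. The substitution $r=r_1(1-x)$, normalised by $Lr_1$, reproduces exactly the correspondence $x\in[x_j,x_{j-1})\leftrightarrow r\in(\frac{j-1}{M}r_1,\frac{j}{M}r_1]$ of Remark \ref{rmk:2}; the argument of Remark \ref{rmk:1} gives the fibre bound \eqref{eq:rmk-41} (note $\widetilde F_v(a_j,\cdot)=\tilde f_v(a_j,\cdot)$ since $(a_j)_M=a_j$); and Tonelli as in Proposition \ref{prop:rotational} finishes, while your closing domain worry is moot because $x\mapsto r_1(1-x)$ maps $[0,1)$ onto $(0,r_1]$, so $F$ is defined on all of $[0,1)^2$ (the fibre estimates use every $x$, not just the nodes).
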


\begin{rmk}
\label{rmk:2}
As in Remark \ref{rmk:1},  \eqref{eq:rmk-40} is a consequence of the following more precise estimate:
\begin{equation}
\label{eq:rmk-41}
\big|\widetilde{F}_v(a, r)-\widetilde{F}_v(a_j, r)\big| \le \frac{4 L r_1+L_0}{M} 2^{-j},
\end{equation}
where $a \in[0,1),\, r \in\left(\frac{j-1}{M} r_1, \frac{j}{M} r_1\right]\,(j=1,2, \cdots, M-\log _2 M)$, and 
$$
\widetilde{F}_v(a, r)=f(a, r)-f(a_M, r_1)+v(a_M).
$$
Moreover, by \eqref{eq:bound-v}, the function $v$ can be chosen to satisfy
\begin{equation}
\label{eq:bound-v-2}
|v(a)|\le L r_1,\quad \forall a\in[0,1).
\end{equation}
\end{rmk}

%%%%%%%%%%%%%%%%%%%%

\subsection{The case of spheres} 
Finally, we specialize to the case of unions of spheres of different radii. To state the results, we begin by extending Definition \ref{def:simple-1} to functions defined on an arbitrary finite interval. 

\begin{defn}
Let \([\alpha_0, \alpha_1) \sub \mathbb{R}\) be a finite interval. A function  
\(
v : [\alpha_0, \alpha_1) \to \mathbb{R}
\)  
is said to be $\frac 1N$-simple if the function 
\[
v_0 : [0, 1) \to \mathbb{R},\; a \mapsto v(\alpha_0 + \big(\alpha_1 - \alpha_0)a\big)
\]  
is $\frac 1N$-simple in the sense of Definition \ref{def:simple-1}; $v$ is said to be simple if it is $\frac{1}{N_1}$-simple for some $N_1$.
\end{defn}

Below we will be concerned only with dyadic intervals \([\alpha_0, \alpha_1) \sub [1, 2)\). When the context is clear,  
we will often denote \(\Delta = \alpha_1 - \alpha_0\).  

\smallskip

The following lemma is a key building block for Theorem \ref{thm:sphere}. It provides a simple function $\tilde{v}$ that allows us to bound, uniformly in the perturbation (\(\vectornotation{u},w\)) and linearly in $\Delta$, the volume of the union of spherical caps with radii ranging over $[\alpha_0,\alpha_1)$.

\begin{lem}
\label{lem:compress}
Let \(n \geq 1\) and let \([\alpha_0, \alpha_1) \sub [1, 2)\) be a dyadic interval.\linebreak For any integer \(M=2^m\, (m=7, 8, \cdots)\), there exists a \(2^{-M}\)-simple function  
\(
\tilde v: [\alpha_0, \alpha_1) \to \mathbb{R}\),
satisfying 
\begin{equation}
\label{eq:lem-26}
|\tilde v(\alpha)| \leq 5\Delta,\quad\forall\alpha\in[\alpha_0, \alpha_1), 
\end{equation}
such that   
\begin{equation}
\label{eq:lem-28}
\Big| \bigcup_{\alpha\in[\alpha_0, \alpha_1)} 
\Big\{(\vectornotation{x} + \vectornotation{u}(\alpha), y + w(\alpha)): y = \sqrt{\alpha^2-r^2}+\tilde v(\alpha),\,  r \le  \frac{1}{2}\Big\} \Big|
\le C\frac{\Delta}{M} 
\end{equation}
holds for all simple functions \(u_1, \cdots, u_n, w: [\alpha_0, \alpha_1) \to \mathbb{R}\) that satisfy  
\begin{equation}
\label{eq:lem-29}    
|\vectornotation{u}(\alpha)|, \, |w(\alpha)| \leq \frac{10           \Delta}{2^M},\quad \forall\alpha\in[\alpha_0, \alpha_1), 
\end{equation}
where \(\vectornotation{u}(\alpha) = (u_1(\alpha), \cdots, u_n(\alpha))\) and \(C\) is an absolute constant.
\end{lem}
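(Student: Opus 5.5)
We plan to deduce Lemma \ref{lem:compress} from Proposition \ref{prop:rotational}, or more precisely from the pointwise estimate \eqref{eq:rotcomp-2} behind it, applied to the rescaled generating function
\[
f(a,r):=\sqrt{(\alpha_0+\Delta a)^2-r^2},\qquad a\in[0,1),\ r\in[0,\tfrac12].
\]
For $r\le \frac12$ and $\alpha\in[1,2)$, $f$ is smooth with $f_{ar}=\Delta\, \alpha r(\alpha^2-r^2)^{-3/2}$. Hence $|f_{ar}|\le L\Delta$ with $L$ an absolute constant, and the mixed-difference hypothesis of Proposition \ref{prop:rotational} holds with constant $L\Delta$ on $[r_0,r_1)=[0,\frac12)$.

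\textbf{Step 1.} We take $\tilde v(\alpha):=v(a)-f(a,0)+\alpha_0$, where $v$ is the $2^{-M}$-simple function produced by the proposition. This makes $\sqrt{\alpha^2-r^2}+\tilde v(\alpha)=\tilde f_v(a,r)+\alpha_0$, and \eqref{eq:rotcomp-2} gives, for each fixed $r$,
\[
\bigl|\{\sqrt{\alpha^2-r^2}+\tilde v(\alpha):\alpha\}\bigr|\lesssim \Delta/M .
\]
There is a catch: $-f(a,0)=-\alpha$ is not simple. We therefore replace $f(a,0)$ by its value at the left endpoint $a_M$ of the dyadic interval containing $a$. As in Remark \ref{rmk:1}, this costs $|\alpha-\alpha_M|\le \Delta 2^{-M}$, which is harmless.

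The bound \eqref{eq:lem-26} follows from $|v|\le L\Delta\cdot\frac12$ by \eqref{eq:bound-v} after rescaling, together with $|\alpha_M-\alpha_0|\le\Delta$. The constant $5$ should come out after choosing normalisations.

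\textbf{Step 2 (perturbation).} This is the main obstacle: the perturbations $(\vectornotation{u},w)$ destroy the rotational structure, so Tonelli slice-by-slice in $r$ no longer applies directly. We plan to exploit the finer information \eqref{eq:compress-5}: for $r$ in the $j$-th annulus $[x_j,x_{j-1})$, all heights lie within $\frac{C\Delta}{M}2^{-j}$ of the $2^j$ values $\tilde f_v(a_j,r)$. Within each of the $2^{j}$ families indexed by $a_j$, the parameters $a$ range over an interval of length $\Delta 2^{-j}$.

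We then bound the perturbed union as follows. A shift by $(\vectornotation{u},w)$ of size $\le 10\Delta 2^{-M}$ moves the graph $y=\sqrt{\alpha^2-r^2}$ over $r\le\frac12$ (which has slope $\lesssim 1$) vertically by at most $O(\Delta 2^{-M})$. So each slice becomes contained in an $O(\Delta2^{-M})$-neighbourhood of the unperturbed slice.

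For $j\le M-\log_2 M$, each slice therefore has measure at most $2^j\bigl(\frac{C\Delta}{M}2^{-j}+C\Delta2^{-M}\bigr)\lesssim \Delta/M$. The remaining region $r\lesssim \frac{\log_2M}{M}$ has volume $\lesssim(\log M/M)^n$, which is not $O(\Delta/M)$ when $n=1$.

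\textbf{Step 3 (small $r$).} We must therefore also handle small $r$ separately. There every slice lies within $O(\Delta)$ vertical extent, since heights are $\alpha+\tilde v(\alpha)+O(\Delta r^2)$. The volume is thus $\lesssim\Delta\,|B^n(0,\frac{\log_2 M}{M})|\lesssim \Delta/M$ once $M\ge2^7$, possibly after reversing the parametrisation $x_j$ so that the finest annulus sits near $r=\frac12$ and the coarse one near $r=0$. This is where the restriction $M\ge 2^7$ should be used.
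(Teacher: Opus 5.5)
Steps 1 and 2 are fine in outline. For $n\ge 2$ your anchor-at-$0$ scheme also works: the discarded region $r\lesssim\frac{\log_2 M}{M}$ has slices of length $O(\Delta)$, so it contributes $O(\Delta(\log M/M)^n)=O(\Delta/M)$.

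The gap is $n=1$, which the lemma must cover, since Theorem~\ref{thm:sphere} uses it for circles in $\R^2$. In Step~3 you assert $\Delta\,|B^1(0,\frac{\log_2 M}{M})|\lesssim\Delta/M$ once $M\ge 2^7$. But the left side equals $2\Delta\log_2 M/M$, and no threshold on $M$ makes $\log_2 M$ an absolute constant. Your fallback does not repair this as stated either. If you anchor the construction at $r=\frac12$, the edge of the region you must control, then the annuli with $j>M-\log_2 M$ form a shell $\frac12(1-\frac{\log_2 M}{M})<r\le\frac12$ of measure $\sim\log M/M$. The crude $O(\Delta)$ slice bound there gives only $O(\Delta\log M/M)$, and now in every dimension.

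The missing idea is to place the anchor strictly outside the region where the estimate is needed. The paper applies Corollary~\ref{cor:2}, which is anchored at the outer radius with shift $f(a_M,r_1)$, on $(0,r_1]$ with $r_1=\frac34$. It sets $\tilde v(\alpha)=f(a_0,r_1)-f(a_M,r_1)+v(\alpha)$, which is $2^{-M}$-simple and satisfies \eqref{eq:lem-26}.

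Then \eqref{eq:rmk-41} holds for all $r\le\frac34(1-\frac{\log_2 M}{M})$, and this exceeds $\frac58$ because $M\ge 2^7$; that is where the restriction enters. Near $r=0$ the annuli are the coarse ones, so no separate small-$r$ analysis is needed.

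The shifts are absorbed using $|D_{\vectornotation{x}}\widetilde F_{v_0}|\le 2$ for $r<\frac34$, comparing each shifted height with the cluster centre $\widetilde F_{v_0}(a_j,\vectornotation{x})$ at the same base point. The union over $r\le\frac12$ then lies inside the union over $r\le\frac58$, since $|\vectornotation{u}|\le\frac18$. This also fixes a smaller defect in your plan: a construction on $[0,\frac12)$ does not cover base points with $|\vectornotation{x}|$ slightly above $\frac12$, which the shift produces.

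Alternatively, you could keep your anchor at $0$ and discard nothing. The per-slice bound $\frac{C\Delta}{M}+C\Delta 2^{j-M}$ from \eqref{eq:compress-5} holds for every $j\le M$, and each annulus has radial width $\sim 1/M$. Summing gives $\lesssim\frac{\Delta}{M}+\frac{\Delta}{M}\sum_{j\le M}2^{j-M}\lesssim\frac{\Delta}{M}$ in every dimension, $n=1$ included.
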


\begin{proof}
Apply Corollary \ref{cor:2} and \eqref{eq:rmk-41} to  
\[
f(a, r) = \sqrt{(\alpha_0 + \Delta\cdot a)^2 - r^2}, \quad 0 \le  a < 1,
\]  
with \(r_1 = \frac{3}{4}\), \(L = 2 \Delta\), {and} \(L = 4 \Delta\). We see that there exists a $2^{-M}$ \text{-simple function} 
\(v_0 : [0, 1) \to \mathbb{R} \text{ such that}
\)  
\begin{equation}
\label{eq:lem-30}
|\widetilde F_{v_0}(a, \vectornotation{x}) - \widetilde F_{v_0}(a_j, \vectornotation{x})| \leq \frac{14\Delta}{M} 2^{-j}
\end{equation}
holds for all $a\in [0,1)$, $j\le M-\log_2M$, and $r \in \big(\frac{j-1}{M}r_1,\frac{j}{M}r_1\big]$, where 
\[
\widetilde F_{v_0}(a, \vectornotation{x}) := f(a, r) - f(a_M, r_1) + v_0(a_M).
\]  
Since \( |D_{\vectornotation{x}} \widetilde F_{v_0}(a, \vectornotation{x})| \leq 2 \) when \( r < \frac{3}{4} \), it follows from \eqref{eq:lem-30} 
and the triangle inequality that  
\begin{equation}
\label{eq:lem-31}
|\widetilde F_{v_0}(a, \vectornotation{x} - \vectornotation{u}_0(a)) - \widetilde F_{v_0}(a_j, \vectornotation{x})| \leq \frac{14\Delta}{M} 2^{-j}  + 2 |\vectornotation{u}_0(a)|
\end{equation}
holds for all $r \in \big(\frac{j-1}{M}r_1,\frac{j}{M}r_1\big]\cap(0,\frac58]$, where 
$
\vectornotation{u}_0(a):= \vectornotation{u}(a_0 + \Delta\cdot a).
$  
By the triangle inequality again, we have from \eqref{eq:lem-31} 
\begin{align}
\big|[\widetilde F_{v_0}(a, \vectornotation{x} - \vectornotation{u}_0(a)) + w_0(a)]& - \widetilde F_{v_0}(a_j, \vectornotation{x})\big|\notag\\
&\leq \frac{14\Delta}{M} 2^{-j}  + 2 |\vectornotation{u}_0(a)| + |w_0(a)|,
\label{eq:lem-32}
\end{align}
where \( w_0(a) := w(a_0 + \Delta  \cdot a)\). 
Now write $a=\frac{\alpha-\alpha_0}{\Delta}$, and set 
$v(\alpha)=v_0(a)$. Denote 
\begin{align}
\widetilde{F}_{v}(\alpha, \vectornotation{x})
&= \widetilde{F}_{v_0}(a, \vectornotation{x} - \vectornotation{u}_0(a)) + w_0(a)\notag\\
&= \sqrt{\alpha^2 - |\vectornotation{x} - \vectornotation{u}(\alpha)|^2} - f(a_M,r_1) + v(\alpha) + w(\alpha). 
\notag
\end{align}
Combining \eqref{eq:lem-32} and \eqref{eq:lem-29}, after taking union over \( \alpha \) we have 
\begin{align}
|\widetilde{F}_{v}([\alpha_0,\alpha_1), \vectornotation{x})| 
&\le \frac{28\Delta}{M} + 4\frac{10\Delta}{2^M} 2^{j} + 2 \frac{10\Delta}{2^M} 2^{j}\notag\\
&\le \frac{88\Delta}{M}, 
\label{eq:lem-33}
\end{align}
for all $r \in \big(\frac{j-1}{M}r_1,\frac{j}{M}r_1\big]\cap(0,\frac58]$. 
As in the proof of Proposition \ref{prop:compress}, it follows from \eqref{eq:lem-33} and Tonelli's theorem that 
\begin{align}
\Big| \bigcup_{\alpha\in[\alpha_0, \alpha_1)}  \big\{(\vectornotation{x}, y): y = \widetilde{F}_{v}(\alpha, \vectornotation{x}),\, r \le \frac{5}{8} \big\} \Big|
&\le \frac{88\Delta}{M} \cdot |B^n(0,1)|\Big( \frac{5}{8} \Big)^n\notag\\
&\le C\frac{\Delta}{M},
\label{eq:lem-34}
\end{align}
for an absolute constant \( C \). 

Now set the desired \(2^{-M}\)-simple function to be 
$$
\tilde v(\alpha)=f(a_0,r_1)-f(a_M,r_1)+v(\alpha),\quad \alpha\in[\alpha_0, \alpha_1).
$$
Notice that since $|\vectornotation{u}(\alpha)|\le \frac18,$ we have 
\begin{align}
E 
:=& \bigcup_{\alpha\in[\alpha_0, \alpha_1)}   \Big\{ (\vectornotation{x} + \vectornotation{u}(\alpha), y + w(\alpha)) : y = \sqrt{\alpha^2-r^2}+\tilde v(\alpha),\,  r \le  \frac{1}{2} \Big\}\notag\\
\sub&\bigcup_{\alpha\in[\alpha_0, \alpha_1)} \Big\{ (\vectornotation{x}, y) : y = \widetilde F_v(
\alpha, \vectornotation{x})+f(a_0,r_1),\, r \le \frac{5}{8} \Big\}.
\label{eq:lem-35}
\end{align}
Combining  \eqref{eq:lem-35} and \eqref{eq:lem-34}, by transition-invariance we obtain
\[
|E|\leq C\frac{\Delta}{M},
\]
which shows \eqref{eq:lem-28}. 

The bound \eqref{eq:lem-26} for $\tilde v(\alpha)$ follows immediately from 
$$|f(a_M,r_1)-f(a_0,r_1)|\le 2\Delta$$
and \eqref{eq:bound-v-2}. 
This completes the proof of Lemma \ref{lem:compress}
\end{proof}

For notational convenience, we will denote the interval $[\alpha_0,\alpha_1)$ by $I$, and write   \begin{equation}
\label{eq:def-v(a,I)}
v(a;I)=\tilde v(\alpha_0+\Delta\cdot a),\quad a\in[0,1), 
\end{equation}
for the $2^{-M}$-simple function $\tilde v$ in Lemma \ref{lem:compress}. 

We are now ready to state Theorem \ref{thm:sphere}. For a fixed $n\ge2$, denote 
$$
E=\left\{\vectornotation{x} \in \mathbb{R}^n: 1 \le r<2\right\}
$$
and
$$
S_\alpha=\left\{\vectornotation{x} \in \mathbb{R}^n:  r=\alpha\right\}, \quad \alpha>0 .
$$
Clearly, 
$$
E=\bigcup_{\alpha \in[1,2)} S_\alpha .
$$
We will call a vector-valued function
$\vectornotation{v}:[1,2) \to \mathbb{R}^n$ $\frac{1}{N}$-{simple} if each component of $\vectornotation{v}=(v_1, \cdots, v_n)$ is $\frac{1}{N}$-simple.

\begin{thm}
\label{thm:sphere}
Let $n \ge 2$. For any integer $M=1,2, \cdots$, there exists a $2^{-M}$-simple function $\vectornotation{v}:[1,2) \rightarrow \mathbb{R}^n$ such that
\begin{equation}
\label{eq:C-over-M}
\Big|\bigcup_{\alpha \in[1,2)}\big(S_\alpha+\vectornotation{v}(\alpha)\big)\Big| \le \frac{C_n}{M},
\end{equation}
where $C_n$ is a constant depending only on $n$.
\end{thm}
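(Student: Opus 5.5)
The plan is to build $\vectornotation{v}$ as a finite multiscale superposition of the one-cap shifts from Lemma \ref{lem:compress}. Each scale is assigned one direction on the sphere, and the perturbation allowance \eqref{eq:lem-29} is used to absorb all finer-scale shifts.

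\emph{Covering.} Cover the unit sphere by $K=K_n$ caps around unit vectors $\omega_1,\dots,\omega_K$. Each cap is chosen so that, in rotated coordinates with $\omega_k$ as the vertical axis, every sphere $S_\alpha$ ($1\le\alpha<2$) decomposes as the union of the $K$ graphs $y=\sqrt{\alpha^2-r^2}$, $r\le \frac12$. Taking $K$ large enough, depending only on $n$, the angular caps of half-angle $\arcsin(1/4)$ cover $\mathbb S^{n-1}$. Since Lebesgue measure is rotation invariant, Lemma \ref{lem:compress} applies verbatim in each rotated frame $\rho_k$.

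\emph{Scales.} Fix $M'=2^m\ge 2^7$ and set $\Delta_k:=2^{-(k-1)M'}$ for $k=1,\dots,K$. Let $\mathcal D_k$ be the dyadic subintervals of $[1,2)$ of length $\Delta_k$. For each $I\in\mathcal D_k$, let $\tilde v_I$ be the $2^{-M'}$-simple function on $I$ given by Lemma \ref{lem:compress}. Define
\[
\vectornotation{v}(\alpha):=\sum_{k=1}^K \tilde v_{I_k(\alpha)}(\alpha)\,\rho_k e_{n},
\]
where $I_k(\alpha)\in\mathcal D_k$ is the interval containing $\alpha$. Then $\vectornotation{v}$ is $2^{-KM'}$-simple. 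The $k$-th summand is constant on intervals of length $\Delta_k2^{-M'}=\Delta_{k+1}$, so it is constant on each $J\in\mathcal D_{k'}$ for every $k'>k$.

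\emph{Bounding cap $k$.} Fix $k$ and $I\in\mathcal D_k$. For $\alpha\in I$, the terms with index $<k$ are constant on $I$, so they translate the whole family rigidly. The term of index $k$ is the vertical shift $\tilde v_I$ in frame $k$. The terms with index $>k$ form a perturbation $(\vectornotation{u}(\alpha),w(\alpha))$ in frame $k$ of size at most
\[
\sum_{k'>k}5\Delta_{k'}\le 10\,\Delta_k2^{-M'} ,
\]
using \eqref{eq:lem-26}. This is exactly the allowance \eqref{eq:lem-29}. Hence \eqref{eq:lem-28} shows that the union over $\alpha\in I$ of the translated $k$-th caps has measure at most $C\Delta_k/M'$. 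Summing over the $\Delta_k^{-1}$ intervals $I\in\mathcal D_k$ gives $C/M'$, and summing over $k$ gives $CK/M'$.

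\emph{Parameters.} Given $M$, choose $M'$ the largest power of two with $KM'\le M$. A $2^{-KM'}$-simple function is $2^{-M}$-simple, and $CK/M'\lesssim_n 1/M$. For $M$ below a threshold depending on $n$ (in particular when $M'<2^7$), take $\vectornotation{v}=0$: the union is then the annulus, of measure $O_n(1)\lesssim_n 1/M$.

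The main obstacle I expect is verifying that the frame-$k$ perturbation is legitimately of the form allowed in Lemma \ref{lem:compress}. The finer shifts point in other directions $\rho_{k'}e_n$, so in frame $k$ they split into horizontal and vertical components $(\vectornotation{u},w)$. These components must be simple functions of size $\le 10\Delta_k2^{-M'}$, which holds since each component is bounded by the norm. I also need the covering caps to fit inside the region $r\le\frac12$ for all $\alpha\in[1,2)$ simultaneously, which is handled by choosing the caps angularly with a margin.
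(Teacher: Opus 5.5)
Your proposal is correct and follows the paper's proof essentially step for step. The paper likewise covers by $K=K_n$ caps (using the half-cylinders $T(\vectornotation{\omega}_k)$ rather than angular caps) and builds $\vectornotation{v}$ from the shifts of Lemma \ref{lem:compress} over $K$ nested levels of $2^{-M}$-adic subintervals, in the directions $\vectornotation{\omega}_k$. On each level-$(k-1)$ interval it treats the coarser shifts as a rigid translation and the finer ones as the perturbation $(\vectornotation{u},w)$ of size at most $10\Delta 2^{-M}$, and it concludes with the same parameter choice $2^mK<M\le 2^{m+1}K$, taking $\vectornotation{v}\equiv 0$ for small $M$.
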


\begin{rmk}
    By a standard iterative argument (see, for instance, \cite[Section 3]{YangZhong}), this implies Theorem \ref{thm:sphere_CYZ}.
\end{rmk}

\begin{proof}
For $\vectornotation{\omega} \in S_1$, denote
$$
T(\vectornotation{\omega})=\big\{\vectornotation{x} \in \mathbb{R}^n:\, |\vectornotation{x}-\langle\vectornotation{x}, \vectornotation{\omega}\rangle \vectornotation{\omega}|<\frac{1}{2},\,\langle\vectornotation{x}, \vectornotation{\omega}\rangle>0\big\} .$$
Since
$$
\overline{E} \sub \bigcup_{\vectornotation{\omega} \in S_1} T(\vectornotation{\omega}),
$$
by compactness, there exist $\vectornotation{\omega}_1, \cdots, \vectornotation{\omega}_K \in S_1$ such that
\begin{equation}
\label{eq:E-covered}
E \sub \bigcup_{k=1}^K T(\vectornotation{\omega}_k),
\end{equation}
where the number $K$ depends only on $n$.

Now fix $m \in\{7,8, \cdots\}$. Write $M=2^m$. Partition $I:=[1,2)$ into $2^M$ many subintervals:
$$
I=\bigcup_{i_1=0}^{2^M-1} I_{i_1},
$$
where $I_{i_1}=\left[1+\frac{i_1}{2^M}, 1+\frac{i_1+1}{2^M}\right)$. Partition each $I_{i_1}$ further into $2^M$ many subintervals of equal length:
$$
I_{i_1}=\bigcup_{i_2=0}^{2^M-1} I_{i_1, i_2} .
$$
Repeating this process, we have
$$
I=\bigcup_{i_1, \cdots, i_k=0}^{2^M-1} I_{i_1, \cdots, i_k} 
$$
for $k=1,2, \cdots, K$. For $\alpha \in I_{i_1, \cdots, i_K} \sub [1,2)$, set
\begin{equation}
\label{eq:def-v(a)}
\vectornotation{v}(\alpha)=v\Big(\frac{i_1}{2^M} ; I\Big) \vectornotation{\omega}_1
+\sum_{k=2}^K v\Big(\frac{i_k}{2^M} ; I_{i_1, \cdots, i_{k-1}}\Big) \vectornotation{\omega}_k,
\end{equation}
where $v(a; I)$ is as in \eqref{eq:def-v(a,I)}. It is easy to see that
$
\vectornotation{v}: [1,2) \rightarrow \mathbb{R}^n
$
is $2^{-M K}$-simple. 

Denote 
$$
S_\alpha^{(k)}=S_\alpha \cap T(\vectornotation{\omega}_k).
$$
By \eqref{eq:E-covered}, we have
$$
S_\alpha=\bigcup_{k=1}^K S_\alpha^{(k)}, \quad \forall \alpha \in[1,2) .
$$
Thus
$$
\begin{aligned}
\bigcup_{\alpha \in[1,2)}\big(S_\alpha+\vectornotation{v}(\alpha)\big) & =\bigcup_{\alpha \in[1,2)} \bigcup_{k=1}^K\left(S_\alpha^{(k)}+\vectornotation{v}(\alpha)\right) \\
& =\bigcup_{k=1}^K \bigcup_{\alpha \in[1,2)}\left(S_\alpha^{(k)}+\vectornotation{v}(\alpha)\right) .
\end{aligned}
$$
Since $K$ depends only on $n$, \eqref{eq:C-over-M} follows immediately if we can show
\begin{equation}
\label{eq:C-over-M-2}
\Big|\bigcup_{\alpha \in[1,2)}\left(S_\alpha^{(k)}+\vectornotation{v}(\alpha)\right)\Big| \le \frac{C}{M}, \quad k=1,2, \cdots, K
\end{equation}
for an absolute constant $C$. To this end, fix $k$, and write
\begin{equation}
\label{eq:union}
\bigcup_{\alpha \in[1,2)}\left(S_\alpha^{(k)}+\vectornotation{v}(\alpha)\right)=\bigcup_{i_1, \cdots, i_{k-1}=0}^{2^M-1} \bigcup_{\alpha \in I_{i_1, \cdots, i_{k-1}}}\left(S_\alpha^{(k)}+\vectornotation{v}(\alpha)\right).
\end{equation}
With \eqref{eq:union}, it suffices to show
\begin{equation}
\label{eq:union-2}
\Big|\bigcup_{\alpha \in I_{i_1, \cdots, i_{k-1}}}\left(S_\alpha^{(k)}+\vectornotation{v}(\alpha)\right)\Big| \le C \frac{\Delta_{i_1, \cdots, i_{k-1}}}{M}, 
\end{equation}
for $i_1, \cdots, i_{k-1} \in\left\{0,1, \cdots, 2^M-1\right\}$, where
$$
\Delta_{i_1, \cdots, i_{k-1}}:=|I_{i_1, \cdots, i_{k-1}}| .
$$
To show \eqref{eq:union-2}, write
$$
\vectornotation{v}(\alpha)=\vectornotation{v}_0(\alpha)+\vectornotation{v}_1(\alpha)+\vectornotation{v}_2(\alpha),
$$
where
$$
\begin{aligned}
& \vectornotation{v}_0(\alpha)=v\Big(\frac{i_1}{2^M} ; I\Big) \vectornotation{\omega}_1+\cdots+v\Big(\frac{i_{k-1}}{2^M} ; I_{i_1, \cdots, i_{k-2}}\Big) \vectornotation{\omega}_{k-1}, \\
& \vectornotation{v}_1(\alpha)=v\Big(\frac{i_k}{2^M} ; I_{i_1, \cdots, i_{k-1}}\Big) \vectornotation{\omega}_k,
\end{aligned}
$$
and
$$
\vectornotation{v}_2(\alpha)=\vectornotation{v}(\alpha)-\vectornotation{v}_0(\alpha)-\vectornotation{v}_1(\alpha) .
$$
Since \( \vectornotation{v}_0(\alpha) \) depends only on \( i_1, \cdots, i_{k-1} \), by translation invariance, \eqref{eq:union-2} is equivalent to
\begin{equation}
\label{eq:union-3}
\Big| \bigcup_{\alpha \in I_{i_1, \cdots, i_{k-1}}} \big(S_\alpha^{(k)} + \vectornotation{v}_1(\alpha) + \vectornotation{v}_2(\alpha)\big) \Big| \leq C \frac{\Delta_{i_1, \cdots, i_{k-1}}}{M}.
\end{equation}
In order to apply Lemma \ref{lem:compress} (with \( n \) replaced by \( n-1 \) here),
we rotate the coordinate system so that \( \vectornotation{\omega}_k \) points in the (vertical) \( y \)-direction, and take
$[\alpha_0, \alpha_1)=I_{i_1, \cdots, i_{k-1}}$ and 
\[(\vectornotation{u}(\alpha), w(\alpha)) = \vectornotation{v}_2(\alpha).\]
Note that by \eqref{eq:def-v(a)} and \eqref{eq:lem-26}, we have
\begin{align*}
|\vectornotation{v}_2(\alpha)| 
&= \left| \sum_{\ell=k+1}^K v\Big(\frac{i_\ell}{2^M}; I_{i_1, \cdots, i_{\ell-1}}\Big) \vectornotation{\omega}_{\ell}\right|\\
&\leq \sum_{\ell=k+1}^K 5 \Delta_{i_1, \cdots, i_{\ell-1}}\\
&\leq 10 \Delta_{i_1, \cdots, i_{k}}\\
&= \frac{10 \Delta_{i_1, \cdots, i_{k-1}}}{2^M}.
\end{align*}
Therefore condition \eqref{eq:lem-29}  is satisfied, and Lemma \ref{lem:compress} yields \eqref{eq:union-3} after rotation, as desired. 

Finally, given any $M=1,2, \cdots$, we may assume without loss of generality that $M>2^7 K$, as otherwise \eqref{eq:C-over-M} trivially holds with $\vectornotation{v}(\alpha) \equiv 0$. 
Since $M>2^7 K$, we can find $m \in\{7,8, \cdots\}$ such that
$$
2^m K<M \le 2^{m+1} K.
$$
Applying \eqref{eq:C-over-M-2} above with $M$ replaced by $M_1=2^m$, we obtain a $2^{-M_1 K}$-simple function
$
\vectornotation{v}:[1,2) \rightarrow \mathbb{R}^n
$
such that
$$
\Big|\bigcup_{\alpha \in[1,2)}\big(S_\alpha+\vectornotation{v}(\alpha)\big)\Big| \le \frac{C_n}{M_1} \le \frac{2 K C_n}{M}.
$$
Since \( M_1 K < M \), \( \vectornotation{v}\) is also \( 2^{-M} \)-simple. This provides the desired \( 2^{-M} \)-simple function, and the proof of Theorem \ref{thm:sphere} is complete.
\end{proof}

%%%%%%%%%%%%%%%%%%%%%%%%%%%%%%%%%%%%%%%%

\section{Appendix}
\subsection{Level set estimates}
We invoke the level set estimate recently proved in \cite{ChenYangLevelset2026}.
\subsubsection{Level set estimate via gradient lower bound}

\begin{prop}\label{prop:gradient_lower_bound}
    Let $\Omega \sub \mathbb{R}^n$, $n\ge 1$ be a bounded set that can be covered by $N$ convex subsets with diameters at most $1$. Let $f \in C^{1}(\Omega)$ such that $Df$ is Lipschitz. If
$$
\left| D f(x) \right| \ge 1, \quad \forall x \in \Omega,$$
then for all $c\in\mathbb R,\ \delta>0$, we have
\begin{equation*}
    |\{x \in \Omega: c \le f(x)<c+\delta\}|\lesssim N(1+\|Df\|_{\mathrm{Lip}(\Omega)})\delta,
\end{equation*}
where the implicit constant depends only on $n$.
\end{prop}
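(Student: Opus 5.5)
We aim to prove Proposition \ref{prop:gradient_lower_bound} by first reducing to a single convex piece and then using the co-area formula, with a local flow-box argument to control the measure of each level set. Since $\Omega$ is covered by $N$ convex sets $\Omega_1,\dots,\Omega_N$ of diameter at most $1$, it suffices to show
\[
|\{x\in\Omega_k: c\le f(x)<c+\delta\}|\lesssim (1+\|Df\|_{\mathrm{Lip}})\delta
\]
for each $k$ and then sum over $k$. So from now on we assume $\Omega$ is convex with diameter at most $1$, and we write $\Lambda:=1+\|Df\|_{\mathrm{Lip}(\Omega)}$.

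We then localise to smaller balls. We cover $\Omega$ by $O(\Lambda^n)$ balls $B$ of radius $\rho:=c_n/\Lambda$. This alone would cost too much, so we will not simply sum over the balls. Instead we plan to argue via the co-area formula:
\[
|\{c\le f<c+\delta\}|=\int_c^{c+\delta}\int_{f^{-1}(s)}\frac{d\mathcal H^{n-1}}{|Df|}\,ds\le \int_c^{c+\delta}\mathcal H^{n-1}(f^{-1}(s)\cap\Omega)\,ds .
\]
This reduces the problem to the uniform bound
\[
\mathcal H^{n-1}(f^{-1}(s)\cap\Omega)\lesssim \Lambda \quad\text{for every } s.
\]

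To prove this level-set area bound, we work on a ball $B$ of radius $\rho$. There $Df$ varies by at most $\Lambda\rho\le 1/10$, while $|Df|\ge1$, so the direction $e:=Df(x_B)/|Df(x_B)|$ satisfies $\partial_e f\ge 1/2$ on $B$. Consequently $f^{-1}(s)\cap B$ is a Lipschitz graph over the hyperplane $e^\perp$ with Lipschitz constant $O(1)$, and hence $\mathcal H^{n-1}(f^{-1}(s)\cap B)\lesssim\rho^{n-1}$. Summing this over all balls would only give a bound of order $\Lambda^n\rho^{n-1}\sim\Lambda$, which is exactly the required bound. Bounded overlap of the cover makes this sum legitimate. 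So the naive count already suffices, and the covering number $O(\Lambda^n)$ is harmless because each ball contributes only $\rho^{n-1}$.

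The main technical point is measure-theoretic: justifying the co-area formula for a $C^{1,1}$ function, and confirming that the level-set area bound is uniform in $s$. Both are routine since $f$ is $C^{1}$ with Lipschitz gradient. Convexity is used only to guarantee the $O(\Lambda^n)$ covering count, which comes from the diameter bound. Alternatively, one could avoid the co-area formula altogether: on each ball, straighten coordinates along $e$ and use Fubini. Each line parallel to $e$ meets $\{c\le f<c+\delta\}\cap B$ in a set of length at most $2\delta$, so this piece has measure at most $2\delta\rho^{n-1}$, and summing over balls again gives $\lesssim \Lambda\delta$. I would present this Fubini version, as it is cleaner.
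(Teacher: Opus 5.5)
Your final (Fubini) argument is correct. It differs from the paper only in that the paper gives no proof here: it quotes the proposition, together with Theorem \ref{thm:sublevel_set_measure}, from \cite{ChenYangLevelset2026}.

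Your route is self-contained and elementary:
\begin{enumerate}
\item On a convex piece $\Omega_k$ of diameter at most $1$, cover by $O(\Lambda^n)$ balls of radius $\rho=c_n/\Lambda$.
\item Choose $x_B\in B\cap\Omega_k$. On $B\cap\Omega_k$ the gradient stays within $2c_n\le 1/10$ of $Df(x_B)$, so $\partial_e f\ge 9/10$ there.
\item Slicing along $e$ gives $|\{c\le f<c+\delta\}\cap B\cap\Omega_k|\lesssim \delta\rho^{n-1}$.
\item Subadditivity over the balls gives $\lesssim \Lambda^n\rho^{n-1}\delta\sim\Lambda\delta$ per piece, hence $\lesssim N\Lambda\delta$ in total.
\end{enumerate}
The merit of this is that it shows exactly where the factor $1+\|Df\|_{\mathrm{Lip}}$ comes from: it is the number of balls of radius $\Lambda^{-1}$ (the scale at which the gradient direction can be frozen), weighted by their $\rho^{n-1}$ cross-sections.

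One statement in your write-up is wrong, and it hides the only place the geometric hypothesis really enters. Convexity is not what gives the $O(\Lambda^n)$ covering count; the diameter bound alone does that. Convexity is used in the slicing step. A line parallel to $e$ meets the convex set $B\cap\Omega_k$ in a single interval, on which $t\mapsto f(p+te)$ is strictly increasing. So the slice of $\{c\le f<c+\delta\}$ is one interval of length at most $2\delta$. Your co-area claim that $f^{-1}(s)\cap B$ is a single Lipschitz graph over $e^\perp$ needs the same fact.

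For the same reason, the convex pieces must lie inside the set on which $f$ is $C^1$ with Lipschitz gradient, not merely cover $\Omega$. Take $\Omega=\bigcup_{j=0}^{K-1}[j/K,\,j/K+1/(2K)]\sub[0,1]$ and $f(x)=x-j/K$ on the $j$-th interval. Then $f'\equiv 1$, $\|f'\|_{\mathrm{Lip}}=0$, and $\Omega$ is covered by the single convex set $[0,1]$, yet $|\{0\le f<1/(2K)\}|=1/2$. Your implicit reading, that $\Omega_k$ lies in the domain of $f$, is the right one, and the slicing step is exactly where it is needed.

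Two minor points. When summing over balls you only need subadditivity, not bounded overlap. Your sentence saying the naive ball count ``would cost too much'' contradicts your own correct count $\Lambda^n\rho^{n-1}\sim\Lambda$.
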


%%%%%%%%%%

\subsubsection{Level set estimate via Hessian determinant lower bound}

\begin{thm}\label{thm:sublevel_set_measure}
    Let $n\ge 2$ and let $f$ be a twice-differentiable function defined on $[-1,1]^n$ such that all eigenvalues of $D^2 f$ have absolute values bounded below by $1$. Denote for $c\in\R,\ 0<\delta<1$,
    \begin{equation}
        E_\delta:=\{|x|\le 1: c\le  f(x)<  c+\delta\}. 
    \end{equation}
\begin{enumerate}[label=(\alph*)]
    \item If $D^2 f$ is positive or negative definite, then $|E_\delta|\lesssim \delta$, where the implicit constant depends only on $n$.
    \item If $D^2 f$ is continuous, then $|E_\delta|\lesssim \begin{cases}
            \delta|\log \delta|\,\, & n=2,\\
            \delta \,\, & n\ge 3,
        \end{cases}$
    where the implicit constant depends only on $n,\|D^2 f\|_\infty$ and the modulus of continuity of $D^2 f$.
\end{enumerate}
\end{thm}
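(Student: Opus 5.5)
For part (a) I would use a radial slicing argument around a minimiser, and for part (b) a dyadic decomposition according to the size of $|Df|$, feeding each piece into Proposition \ref{prop:gradient_lower_bound} after rescaling.

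\emph{Part (a).} Replacing $f$ by $-f$ if necessary, assume $D^2f\ge I$, so $f$ is $1$-strongly convex on the closed unit ball $B$.
\begin{enumerate}
\item Let $x_0$ be a minimiser of $f$ on $B$. It exists by compactness, and since $B$ is convex the first-order optimality condition gives $\langle Df(x_0),x-x_0\rangle\ge 0$ for all $x\in B$.
\item Strong monotonicity of $Df$ then gives $\langle Df(x),x-x_0\rangle\ge |x-x_0|^2$ on $B$.
\item Work in polar coordinates $x=x_0+\rho\theta$ and set $\varphi_\theta(\rho):=f(x_0+\rho\theta)$. Step 2 gives $\varphi_\theta'(\rho)\ge\rho$, so $\varphi_\theta$ is increasing along each ray inside $B$.
\item For $n\ge 2$ and $\rho\le 2$ we have $\rho^{n-1}\le 2^{n-2}\rho\le 2^{n-2}\varphi_\theta'(\rho)$. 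Hence, on each ray,
\[
\int \rho^{n-1}\mathbf 1_{\{c\le\varphi_\theta(\rho)<c+\delta\}}\,d\rho\le 2^{n-2}\int \varphi_\theta'(\rho)\,\mathbf 1_{\{c\le\varphi_\theta(\rho)<c+\delta\}}\,d\rho\le 2^{n-2}\delta .
\]
The last inequality uses the monotonicity of $\varphi_\theta$ and the change of variables $s=\varphi_\theta(\rho)$.
\item Integrating over $\theta\in\mathbb S^{n-1}$ gives $|E_\delta|\lesssim_n\delta$.
\end{enumerate}

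\emph{Part (b).} The first step is to localise. Using the modulus of continuity of $D^2f$, choose $\eta>0$ so that on every cube $Q$ of side $\eta$ we have $\|D^2f(x)-A_Q\|\le 1/4$ for some fixed symmetric matrix $A_Q$ all of whose eigenvalues satisfy $|\lambda|\ge 1$. The cover has $O(\eta^{-n})$ cubes, which is acceptable since $\eta$ depends only on the allowed quantities. On $Q$ we then get
\[
|Df(x)-Df(x')|\ge |A_Q(x-x')|-\tfrac14|x-x'|\ge\tfrac34|x-x'| .
\]
So $Df$ is injective on $Q$ with a quantitative lower Lipschitz bound. In particular, for any $\lambda>0$ the set $\{x\in Q:|Df(x)|<\lambda\}$ lies in a ball of radius $\lesssim\lambda$ centred at any one of its points.

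Next, decompose $Q$ dyadically in $|Df|$. Let $S_\lambda:=\{x\in Q:\lambda\le |Df(x)|<2\lambda\}$ for dyadic $\lambda\in[\delta^{1/2},O(1)]$. The leftover set $\{|Df|<\delta^{1/2}\}$ has measure $\lesssim\delta^{n/2}\le\delta$. Each $S_\lambda$ lies in a ball $B(x_*,C\lambda)$, and I would rescale by $\tilde f(z):=\lambda^{-2}f(x_*+\lambda z)$. Then $|D\tilde f|\sim 1$ on the rescaled $S_\lambda$, the domain is covered by $O(1)$ convex sets of diameter $1$, and $\|D\tilde f\|_{\Lip}\le\|D^2f\|_\infty$. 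Proposition \ref{prop:gradient_lower_bound}, applied with level thickness $\delta\lambda^{-2}$, bounds the rescaled level set by $\lesssim\delta\lambda^{-2}$. Undoing the rescaling gives
\[
|E_\delta\cap S_\lambda|\lesssim\lambda^n\cdot\delta\lambda^{-2}=\delta\lambda^{n-2}.
\]
Summing over the $O(|\log\delta|)$ dyadic scales gives $\delta|\log\delta|$ when $n=2$ and a geometric sum $\lesssim\delta$ when $n\ge3$.

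The main obstacle is this last step. Proposition \ref{prop:gradient_lower_bound} requires the lower bound $|D\tilde f|\ge 1$ on its whole domain, but $S_\lambda$ is an annular-type region that need not be convex. I would try to handle this by covering the rescaled $S_\lambda$ with $O(1)$ cubes of small side (depending on $\|D^2f\|_\infty$). Any such cube that meets $S_\lambda$ has $|D\tilde f|\ge 1/2$ throughout, because $D\tilde f$ is Lipschitz with constant $\|D^2f\|_\infty$. Running the proposition on each cube then costs only an $O(1)$ factor.
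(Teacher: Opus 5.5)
The paper does not prove this theorem; it quotes it from \cite{ChenYangLevelset2026}. There is therefore no in-paper argument to compare with, and your proposal has to stand as a self-contained proof. It does, up to one small repair.

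\emph{Part (a)} is correct and clean. Strong monotonicity of $Df$ follows from the mean value theorem along segments, so you need neither continuity nor boundedness of $D^2f$. The first-order condition at the constrained minimiser then gives $\varphi_\theta'(\rho)\ge\rho$. The inequality $\rho^{n-1}\le 2^{n-2}\rho$ on $[0,2]$ yields a constant depending only on $n$, as claimed.

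\emph{Part (b)} is also sound. The quantitative injectivity $|Df(x)-Df(x')|\ge\frac34|x-x'|$ on small cubes confines $\{|Df|<2\lambda\}\cap Q$ to a ball of radius $O(\lambda)$. After the Hessian-preserving rescaling $\tilde f(z)=\lambda^{-2}f(x_*+\lambda z)$, each dyadic shell contributes $\delta\lambda^{n-2}$. This shows transparently why the logarithm appears exactly when $n=2$. Your handling of the non-convexity of $S_\lambda$ is fine: you cover the rescaled shell by cubes of side $\sim\|D^2f\|_\infty^{-1}$ on which $|D\tilde f|\ge\frac12$. What your route buys is that everything reduces to Proposition \ref{prop:gradient_lower_bound} plus elementary geometry.

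The repair concerns the range of $\lambda$. You let $\lambda$ run over dyadic values in $[\delta^{1/2},O(1)]$, which tacitly assumes $\|Df\|_\infty=O(1)$. But $\|Df\|_\infty$ is not among the quantities the constant may depend on: adding a huge linear term to $f$ changes nothing else. For $n\ge3$ your per-shell bound $\delta\lambda^{n-2}$ grows with $\lambda$, so you cannot simply extend the dyadic sum upward. Instead, fix $\lambda_0\sim1$ and treat $\{x\in Q:|Df(x)|\ge\lambda_0\}$ in one piece without rescaling:
\begin{itemize}
\item cover $Q$ by cubes of side $\sim\lambda_0/\|D^2f\|_\infty$;
\item keep those meeting this set, on which $|Df|\ge\lambda_0/2$;
\item apply Proposition \ref{prop:gradient_lower_bound} to $2f/\lambda_0$ on each.
\end{itemize}
This gives a contribution $\lesssim\delta$ with a constant depending only on allowed quantities.

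A minor remark: the number of dyadic shells is $O(1+|\log\delta|)$, so for $n=2$ what you prove is $\delta(1+|\log\delta|)$. This is the intended bound, since the result is only used for $\delta<1/2$. Read literally, $\delta|\log\delta|\to0$ as $\delta\to1$, while $|E_\delta|$ need not.
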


%%%%%%%%%%%%%%%%%%%%

\subsection{Cinematic curvature conditions}\label{sec:cinematic_curvature}
In this subsection, we discuss a little more about various notions of cinematic curvature conditions, many of which are more or less equivalent to Definition \ref{defn:cinematic}. For simplicity, we will assume that $g$ is $C^3$.

\begin{defn}
    We say that $g(a,t)$ satisfies the determinant cinematic curvature condition if the following determinant condition holds:
    \begin{equation}\label{eqn:cinematic_curvature_determinant}
    \kappa':=\inf_{a\in A}\inf_{t\in I}\left|\det \begin{pmatrix}
        g_{tt} & g_{ta}\\
        g_{ttt} & g_{tta}
    \end{pmatrix}\right|> 0.
\end{equation}

We say that $g(a,t)$ satisfies the mixed-difference cinematic curvature condition if the following lower bound holds for some $c>0$:
\begin{equation}\label{eqn:cinematic_curvature_difference}
    \left|\det \begin{pmatrix}
        g_t(\bar a,\bar t)-g_t(\bar a,t) & g_t(\bar a, t)-g_t( a,t)\\
        g_{tt}(\bar a,\bar t)-g_{tt}(\bar a,t) & g_{tt}(\bar a, t)-g_{tt}( a,t)
    \end{pmatrix}\right|\ge c|\bar a-a||\bar t-t|.
\end{equation}
\end{defn}
An easy Taylor expansion leads to the following proposition, whose proof we leave as an exercise.
\begin{prop}\label{prop:equivalent_cinematic}
    The cinematic curvature condition \eqref{eqn:general_cinematic_curvature}, the determinant cinematic curvature condition \eqref{eqn:cinematic_curvature_determinant} and the mixed-difference cinematic curvature condition \eqref{eqn:cinematic_curvature_difference} are equivalent if $|\bar a-a|+|\bar t-t|$ is small enough (depending only on $\|g\|_{\mathrm{reg}},\kappa,\kappa',c$). Moreover, if $g_{ta}g_t$ does not change sign, $g$ is elliptic if and only if the determinant in \eqref{eqn:cinematic_curvature_determinant} has the same sign as $g_{ta}g_t$.
\end{prop}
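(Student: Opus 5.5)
The plan is a first-order Taylor expansion of every difference appearing in \eqref{eqn:general_cinematic_curvature} and \eqref{eqn:cinematic_curvature_difference} around the base point $(a,t)$. Since $g\in C^3$ on the compact set $A\times I$, all third-order partials of $g$ are uniformly continuous, so every remainder will be $o(|\bar a-a|+|\bar t-t|)$ uniformly in $(a,t)$. Write $\Delta=\bar a-a$ and $s=\bar t-t$, and introduce the Jacobian
\begin{equation*}
J(a,t):=\begin{pmatrix} g_{ta} & g_{tt}\\ g_{tta} & g_{ttt}\end{pmatrix}(a,t),\qquad \det J=-\det\begin{pmatrix} g_{tt} & g_{ta}\\ g_{ttt} & g_{tta}\end{pmatrix}.
\end{equation*}
So $|\det J|$ is exactly the quantity whose infimum is $\kappa'$.

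\textbf{Step 1: \eqref{eqn:general_cinematic_curvature} versus \eqref{eqn:cinematic_curvature_determinant}.} Expand the numerator vector as
\begin{equation*}
\big(g_t(\bar a,\bar t)-g_t(a,t),\,g_{tt}(\bar a,\bar t)-g_{tt}(a,t)\big)=J(a,t)(\Delta,s)^T+o(|\Delta|+|s|).
\end{equation*}
\begin{itemize}
\item If $\kappa'>0$, then $|Jv|\ge |\det J|\,\|J\|^{-1}|v|\gtrsim \kappa'|v|$, with the implicit constant controlled by $\|g\|_{\mathrm{reg}}$. Once $|\Delta|+|s|$ is small enough that the remainder is absorbed, this gives $\kappa\gtrsim \kappa'$ locally.
\item Conversely, if $\kappa>0$, fix a direction $v$, put $(\Delta,s)=hv$ and let $h\to 0$. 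This yields $|J(a,t)v|\ge \kappa|v|_{\ell^1}$ for every $v$. Hence $J(a,t)$ is invertible with $\|J^{-1}\|\lesssim\kappa^{-1}$, and so $|\det J|=|\det J^{-1}|^{-1}\gtrsim \kappa^2$ uniformly in $(a,t)$, i.e. $\kappa'\gtrsim\kappa^2$.
\end{itemize}

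\textbf{Step 2: \eqref{eqn:cinematic_curvature_determinant} versus \eqref{eqn:cinematic_curvature_difference}.} Expand each entry of $D$ in \eqref{eqn:defn_D}. The left column is $s\,(g_{tt},g_{ttt})(\bar a,t)+o(s)$, and the right column is $\Delta\,(g_{ta},g_{tta})(a,t)+o(\Delta)$. Both use the mean value theorem together with uniform continuity of the derivatives, and one may also replace $(\bar a,t)$ by $(a,t)$ at cost $o(1)$. Multilinearity of the determinant then gives
\begin{equation*}
D=s\Delta\,\det\begin{pmatrix} g_{tt} & g_{ta}\\ g_{ttt} & g_{tta}\end{pmatrix}(a,t)+o(|s\Delta|).
\end{equation*}
The point is that each column carries its own small factor, so the error is genuinely $o(|s\Delta|)$ and not merely $o(|s|+|\Delta|)$. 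Hence $\kappa'>0$ implies \eqref{eqn:cinematic_curvature_difference} with $c=\kappa'/2$ for small displacements. Conversely, dividing \eqref{eqn:cinematic_curvature_difference} by $s\Delta$ and letting $s,\Delta\to0$ gives $\kappa'\ge c$.

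\textbf{Step 3: the sign statement.} For $s>0$ and $\Delta\ne0$, use $g_t(\bar a,t)-g_t(a,t)=\Delta\, g_{ta}(a,t)+o(\Delta)$, which has the sign of $\Delta g_{ta}$ because $g_{ta}\ne0$. Combining with Step 2,
\begin{equation*}
D\cdot\big(g_t(\bar a,t)-g_t(a,t)\big)\cdot g_t(a,t)=s\Delta^2\,\det(\cdots)\,g_{ta}g_t\,\big(1+o(1)\big).
\end{equation*}
Since $s\Delta^2>0$, ellipticity (respectively hyperbolicity) holds for small displacements exactly when $\det(\cdots)$ has the same (respectively opposite) sign as $g_{ta}g_t$. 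The determinant is bounded away from zero and $g_{ta}g_t$ does not change sign, so these signs are constant on the connected set $A\times I$.

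\textbf{Main obstacle.} The delicate step is uniformity of the $o(\cdot)$ remainders. I would make this explicit through a common modulus of continuity $\omega$ of the third-order derivatives of $g$ on $A\times I$, and choose the smallness threshold $\eta$ on $|\bar a-a|+|\bar t-t|$ so that $\omega(\eta)\ll \min(\kappa,\kappa',c)/(1+\|g\|_{\mathrm{reg}})$. Strictly speaking, this means the threshold also depends on $\omega$, which is implicit in the standing assumption $g\in C^3$.
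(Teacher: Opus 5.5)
Your proposal is correct and is the Taylor-expansion argument the paper has in mind; the paper leaves this proposition as an exercise. Your key observation is that each column of $D$ carries its own factor $s$ or $\Delta$, so the error is $o(|s\Delta|)$. The ellipticity equivalence, as you read it, holds for small displacements and uses $g_{ta}\neq 0$ (available from $\lambda_1>0$). Your caveat that the smallness threshold must also depend on a modulus of continuity of the third-order derivatives is genuinely needed: $\|g\|_{\mathrm{reg}}$ only bounds $g_{tta}$ and $g_{ttt}$ in sup norm, and smooth examples with uniformly controlled $\|g\|_{\mathrm{reg}}$, $\kappa$ and $\kappa'$ can have $|D|/|s\Delta|$ arbitrarily small at arbitrarily small scales.
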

In Definition \ref{defn:cinematic}, we choose the formulation \eqref{eqn:general_cinematic_curvature} since it has the following advantages: first, it requires less regularity than \eqref{eqn:cinematic_curvature_determinant}. Second, it is immediately clear that \eqref{eqn:general_cinematic_curvature} is directly related to avoiding second order tangency. Third, it is used more directly in the proof later in {\S}\ref{sec:sublevel_measure} without (at least superficially) the need to pass to shorter intervals. The only disadvantage of using \eqref{eqn:general_cinematic_curvature} compared with \eqref{eqn:cinematic_curvature_difference} is that it is not easy to define ellipticity.

\bibliographystyle{alpha}
\bibliography{sample}

\end{document}